\theoremstyle{plain}
\newtheorem{thm}{Theorem}[section]
\newtheorem{remark}{\textbf{Remark}}[section]
\DeclareMathOperator{\artanh}{artanh}
\title{ Efficient and accurate exponential SAV algorithms with relaxation for dissipative system
\thanks{This work is supported by the National Natural Science Foundation of China  grants  12271302, 12131014 and 11971407.}
}
   \author{Yanrong Zhang\thanks{ Department of Applied Mathematics, The Hong Kong Polytechnic University, Hung Hom, Hong Kong. Email: yanrongzhang\_math@163.com}.
       \and Xiaoli Li\thanks{Corresponding Author. School of Mathematics, Shandong University, Jinan, 250100, China. Email: xiaolimath@sdu.edu.cn.}}
\begin{document}

\maketitle

\begin{abstract}
In this paper, we construct two kinds of exponential SAV approach with relaxation (R-ESAV) for dissipative system. The constructed schemes are linear and unconditionally energy stable. They can guarantee the positive property of SAV without any assumption compared with R-SAV and R-GSAV approaches, preserve all the advantages of the ESAV approach and satiesfy dissipation law with respect to  a modified energy which is directly related to the original free energy. Moreover  the second version of R-ESAV approach is easy to construct high-order BDF$k$ schemes. Especially for Navier-Stokes equations, we construct wo kinds of novel schemes based on the R-ESAV method.  Finally, ample numerical examples are presented to exhibit that the proposed approaches are accurate and effective.

\end{abstract}

\begin{keywords} 
dissipative system; energy stability; exponential scalar auxiliary variable (ESAV); relaxation
\end{keywords}

\begin{AMS} 35Q40;  65M12; 35Q55; 65M70  \end{AMS}

\pagestyle{myheadings}
\thispagestyle{plain}

 \section{Introduction} 
 \label{sec:Intro}
Many significant scientific and engineering problems, such as complex fluids, new composite materials, the non-convex function optimization, etc., can be modeled by the 
dissipative system. From the numerical perspective, it's critical to maintain the discrete energy dissipation law so as to obviate non-physics numerical solutions. 
Over the past few decades, a large effort has been devoted to construct efficient energy stable time discretized schemes for the dissipative system. 
Existing and popular approaches can be classified into several categories: stabilized linearly implicit approach \cite{zhu1999coarsening, shen2010numerical}, exponential time differencing (ETD) approach \cite{wang2016efficient, du2019maximum, du2021maximum}, convex splitting approach \cite{elliott1993global, eyre1998unconditionally, shen2012second, baskaran2013convergence}, invariant energy quadratization (IEQ) approach \cite{yang2016linear, yang2017efficient, yang2018efficient, SheY20}, scalar auxiliary variable (SAV) approach \cite{shen2018scalar, shen2018convergence, shen2019new} and so on. 

Among these approaches, SAV method has become a very efficient and popular tool to construct energy stable schemes and has been successfully applied to gradient flow \cite{shen2018scalar, shen2019new, cheng2019highly, li2020stability, SheY20} and general dissipative systems \cite{ lin2019numerical, li2020error, li2020sav}. 
SAV method possesses a lot of attractive superiorities, but there are still some deficiencies that need to be improved. 
For example, 
(\romannumeral1) it requires to solve two linear systems at each time step;
(\romannumeral2) it requires that nonlinear part of free energy has a lower bound; 
(\romannumeral3) it satisfies unconditional energy stability according to modified energy rather than original energy. 
Recently there have been some corresponding improvements for these shortcomings. 
A generalized SAV approach which only requires solving one linear equation with constant coefficients has been proposed by Huang et al. \cite{huang2020highly, huang2021implicit}. Cheng et al.  \cite{cheng2020new, cheng2020global} proposed a novel Lagrange multiplier approach which dissipates original energy and do not require the nonlinear part of the free energy to be bounded from below. The constructed scheme 
requires solving a nonlinear algebraic equation which brings some additional computation costs and theoretical analysis difficulties, and may not exist a suitable solution when the time step is insufficiently small. Besides, some SAV approaches in more general form have been developed to extend its applicability in \cite{liu2020exponential, liu2021highly,cheng2021generalized}. The constructed schemes, especially for the exponential SAV methods, don't need to require that the nonlinear part of the free energy has a lower bound and can guarantee the positive property of SAV without any assumption. Very recently, the relaxation technique has been adopted to the SAV approach to improve the accuracy of numerical solutions  in \cite{jiang2021improving, zhang2022generalized}. The key idea in this approach is to make the modified energy link to the original energy closely by updating the auxiliary variable.

Our aim in this paper is to propose two kinds of relaxed exponential SAV (R-ESAV) approaches for dissipative system. The constructed schemes directly link the SAV to the free energy by introducing relaxation technique, and have outstanding advantages in the
following aspects:
\begin{itemize}
\item R-ESAV schemes are unconditionally energy stable with regard to a modified energy which is closer and directly linked to the original free energy, and can improve the accuracy of the solution noticeably compared with the original ESAV approach;
\item Only one linear system with constant coefficients needs to be solved;
\item The constructed schemes do not need the bounded below limitation of the nonlinear part of free energy;
\item The positive property of SAV without any assumption can be guaranteed.
\end{itemize}
In addition, our numerical results show that, R-ESAV schemes can improve the accuracy of SAV $\xi^{n+1}$, and for the plenty numerical simulations that we tested, the modified energy of our R-ESAV schemes all most always equals to the original free energy. 
Moreover, two kinds of BDF$k$ ($1 \leq k \leq 4$) numerical schemes based on two different decoupled approaches are constructed for Navier-Stokes equations. 
To the author's knowledge, it is the first time to apply relaxation technique to the general dissipative systems.

The remainder of this paper is organized as follows. 
In Section \ref{sec:R-ESAV-1}, we present the first version of relaxed exponential SAV approach for gradient flow. 
Then we extend the R-ESAV-1 approach to gradient systems with multiple components or multiple nonlinear potentials. 
In Section \ref{sec:R-ESAV-2}, we give the second version of relaxed exponential SAV approach for general dissipative system and construct two BDF$k$ schemes for Navier-Stokes equation. 
In Section \ref{sec:numerical-simulations}, we present several numerical experiments by using the new approaches, and provide ample numerical simulations to validates its generality and efficiency. 
In Section \ref{sec:conclusion}, we provide some concluding remarks.

 \section{The first version of relaxed exponential SAV approach}
 \label{sec:R-ESAV-1}
In this section, we consider the improvement of ESAV approach \cite{liu2020exponential} by introducing a relaxation factor to modified the SAV, which is abbreviated as R-ESAV-1 approach for convenience.

 \subsection{The R-ESAV-1 approach for gradient flow} 
Without losing generality, we consider a free energy given by  
\begin{equation}
	E(\phi)=\frac{1}{2}(\mathcal{L} \phi, \phi) + \int_{\Omega} F(\phi) \mathrm{d} \boldsymbol{x},
\end{equation}
where $\mathcal{L}$  is a self-adjoint linear  elliptic operator, $F(\phi)$ is a nonlinear energy density function.
Then, the gradient flow derived from the above free energy by energy-variational principle can be written as follows
\begin{equation} \label{eq:gradient-flow-single}
	\left\{\begin{aligned}
		& \frac{\partial \phi}{\partial t}=-\mathcal{G} \mu, \\
		& \mu=\frac{\delta E(\phi)}{\delta \phi}=\mathcal{L} \phi+F^{\prime}(\phi),
	\end{aligned}\right.
\end{equation}
where  $\mathcal{G}$ is a positive definite operator which signifies the dissipative mechanism of the system, e.g. $\mathcal{G}=I$ for the $L^2$ gradient flow and $\mathcal{G}=-\Delta$ for the $H^{-1}$ gradient flow.

We introduce an exponential SAV
\begin{equation}\label{eq:ESAV1-r}
	r(t) = \exp(E_{1}(\phi))=\exp(\int_{\Omega} F(\phi) \mathrm{d} \boldsymbol{x}),
\end{equation}
and by taking the derivative of \eqref{eq:ESAV1-r} with respect to $t$, we have
\begin{equation}\label{eq:ESAV1-dr}
\frac{\mathrm{d} r}{\mathrm{d} t}=r \int_{\Omega} F^{\prime}(\phi) \phi_{t} \mathrm{d} \boldsymbol{x}=\frac{r^{2}}{\exp \left(E_{1}(\phi)\right)} \int_{\Omega} F^{\prime}(\phi) \phi_{t} \mathrm{d} \boldsymbol{x}.
\end{equation}
Denote 
$
U(r, \phi)=\frac{r}{\exp \left(E_{1}(\phi)\right)} F^{\prime}(\phi),
$
and notice that equation \eqref{eq:ESAV1-dr} can be rewritten equivalently into 
\begin{equation}
\frac{\mathrm{d} \ln (r)}{\mathrm{d} t}=\left(U(r, \phi), \phi_{t}\right),
\end{equation}
then we rewrite the system \eqref{eq:gradient-flow-single} as follows
\begin{equation}
\left\{\begin{aligned}\label{eq:gradient-flow-single-ESAV} 
&\frac{\partial \phi}{\partial t} =-\mathcal{G} \mu, \\ 
&\mu =\mathcal{L} \phi+U(r, \phi), \\ 
&\frac{\mathrm{d} \ln (r)}{\mathrm{d} t}=\left(U(r, \phi), \phi_{t}\right).
\end{aligned}\right.
\end{equation}
By taking the inner product of the first two equations in \eqref{eq:gradient-flow-single-ESAV} with $\mu$ and $\frac{\partial \phi}{\partial t}$, respectively, we have
\begin{equation}
\frac{\mathrm{d}}{\mathrm{d} t}\left[\frac{1}{2}(\mathcal{L} \phi, \phi)+\ln (r)\right]=-(\mathcal{G} \mu, \mu) \leq 0.
\end{equation}

Inspired by the idea of relaxation factor in \cite{jiang2021improving,  zhang2022generalized}, we can construct R-ESAV-1/BDF$k$ ($1 \leq k \leq 6$) schemes:

Given $r^{n-k}, ..., r^{n}, \phi^{n-k}, ..., \phi^{n}$, we determine $r^{n+1}, \phi^{n+1}$ via two steps as follows:

\textbf{Step 1:} Compute an intermediate solution $(\tilde{r}^{n+1}, \phi^{n+1})$ by using  the ESAV approach:
\begin{eqnarray}
\label{eq:R-ESAV1-BDFk-1}
& & \frac{\alpha_{k} \phi^{n+1}-A_{k}\left(\phi^{n}\right)}{\delta t} =-\mathcal{G} \mu^{n+1}, \\
\label{eq:R-ESAV1-BDFk-2}
& & \mu^{n+1} =\mathcal{L} \phi^{n+1}+U\left(B_{k}\left(r^{n}\right), B_{k}\left(\phi^{n}\right)\right), \\ 
\label{eq:R-ESAV1-BDFk-3}
& & \frac{\alpha_{k}\ln(\tilde{r}^{n+1})-A_{k}\left(\ln(r^{n})\right)}{\delta t} =\left(U\left(B_{k}\left(r^{n}\right), B_{k}\left(\phi^{n}\right)\right), \frac{\alpha_{k} \phi^{n+1}-A_{k}\left(\phi^{n}\right)}{\delta t}\right), 
\end{eqnarray}
where $\alpha_{k}$,  $A_{k}$ and $B_{k}$ are related parameter and operators of BDF$k$ schemes, which can  be obtained  by Taylor expansion. For the convenience of readers, we provide the form of $k=1,2,3,4$ as follows:

First-order:
\begin{equation} \label{eq:perameter-BDF1}
	\alpha_{1}=1, \quad A_{1}\left(\phi^{n}\right)=\phi^{n}, \quad B_{1}\left(\phi^{n}\right)=\phi^{n};
\end{equation}
 
Second-order:
\begin{equation}\label{eq:perameter-BDF2}
	\alpha_{2}=\frac{3}{2}, \quad A_{2}\left(\phi^{n}\right)=2 \phi^{n}-\frac{1}{2} \phi^{n-1}, \quad B_{2}\left(\phi^{n}\right)=2 \phi^{n}-\phi^{n-1};
\end{equation}

Third-order:
\begin{equation}\label{eq:perameter-BDF3}
	\alpha_{3}=\frac{11}{6}, \quad A_{3}\left(\phi^{n}\right)=3 \phi^{n}-\frac{3}{2} \phi^{n-1}+\frac{1}{3} \phi^{n-2}, \quad B_{3}\left(\phi^{n}\right)=3 \phi^{n}-3 \phi^{n-1}+\phi^{n-2};
\end{equation}

Fourth-order:
\begin{equation}\label{eq:perameter-BDF4}
	\begin{aligned}
		& \alpha_{4}=\frac{25}{12}, \quad A_{4}\left(\phi^{n}\right)=4 \phi^{n}-3 \phi^{n-1}+\frac{4}{3} \phi^{n-2}-\frac{1}{4} \phi^{n-3}, \\
		& B_{4}\left(\phi^{n}\right)=4 \phi^{n}-6 \phi^{n-1}+4 \phi^{n-2}-\phi^{n-3}.
	\end{aligned}
\end{equation}

\textbf{Step 2:} Update $r^{n+1}$ via relaxation factor as follows: 
\begin{equation}\label{eq:R-ESAV1-BDFk-4}
	r^{n+1} = \theta_{0}^{n+1} \tilde{r}^{n+1} + (1-\theta_{0}^{n+1})\exp\left(E_{1}(\phi^{n+1})\right), \quad \theta_{0}^{n+1} \in \mathcal{W},
\end{equation}
where, $\mathcal{W}$ is a set defined as follows: \\
First-order:
\begin{equation}\label{eq:set-condition-ESAV1-BDF1}
	\mathcal{W}=\left\lbrace \theta\in [0,1] \; s.t. \; \ln(r^{n+1}) - \ln(\tilde r^{n+1}) \leq \delta t \gamma \left(\mathcal{G} \mu^{n+1}, \mu^{n+1}\right) \right\rbrace;
\end{equation}
Second-order:
\begin{equation}\label{eq:set-condition-ESAV1-BDF2}
	\mathcal{W}=\left\lbrace \theta\in [0,1] \; s.t. \;  \frac{3}{2}\ln(r^{n+1}) - \frac{3}{2}\ln(\tilde r^{n+1}) \leq \delta t \gamma \left(\mathcal{G} \mu^{n+1}, \mu^{n+1}\right) \right\rbrace;
\end{equation}
with $\gamma \in [0, 1]$ is a adjustable parameter. 

We explain below how to choose   $\theta_{0}^{n+1}$. 
For BDF$1$ scheme, plugging \eqref{eq:R-ESAV1-BDFk-4} into \eqref{eq:set-condition-ESAV1-BDF1}, we derive that if we choose  $\theta_0^{n+1}$ such that
\begin{equation}\label{eq:ESAV1-cond_zeta-BDF1}
\left(\tilde{r}^{n+1}-\exp\left(E_{1}\left(\phi^{n+1}\right)\right)\right)\theta_0^{n+1} \leq \exp \left(\delta t \gamma \left(\mathcal{G} \mu^{n+1}, \mu^{n+1}\right) + \ln(\tilde{r}^{n+1})\right)-\exp(E_{1}\left(\phi^{n+1})\right),
\end{equation}
then $\theta_0^{n+1}\in \mathcal{W}$. 
Similar to BDF$2$ scheme, we need to choose  $\theta_0^{n+1}$ satiesfy following inequlity  
\begin{equation}\label{eq:ESAV1-cond_zeta-BDF2}
\left(\tilde{r}^{n+1}-\exp\left(E_{1}\left(\phi^{n+1}\right)\right)\right)\theta_0^{n+1} \leq \exp \left(\frac{2}{3}\delta t \gamma \left(\mathcal{G} \mu^{n+1}, \mu^{n+1}\right) + \ln(\tilde{r}^{n+1})\right)-\exp\left(E_{1}(\phi^{n+1})\right).
\end{equation} 
Denote $S=\exp \left(\delta t \gamma \left(\mathcal{G} \mu^{n+1}, \mu^{n+1}\right) + \ln(\tilde{r}^{n+1})\right)$ for BDF$1$ scheme and $S=\exp \left(\frac{2}{3}\delta t \gamma \left(\mathcal{G} \mu^{n+1}, \mu^{n+1}\right)\right.$ $\left. + \ln(\tilde{r}^{n+1})\right)$ for BDF$2$ scheme, the next theorem summarizes the choice of $\theta_0^{n+1}$.

\begin{thm}\label{Th:ESAV1-stability}
We choose $\theta_{0}^{n+1}$ in \eqref{eq:R-ESAV1-BDFk-4} as follows:
\begin{enumerate}
\item If $\tilde{r}^{n+1} = \exp\left(E_{1}\left(\phi^{n+1}\right)\right)$, we set $\theta_{0}^{n+1}=0$.
\item If $\tilde{r}^{n+1} > \exp\left(E_{1}\left(\phi^{n+1}\right)\right)$, we set $\theta_{0}^{n+1}=0$. 
\item If $\tilde{r}^{n+1} < \exp\left(E_{1}\left(\phi^{n+1}\right)\right)$ and $S-\exp\left(E_{1}(\phi)\right) \geq 0$, we set $\theta_{0}^{n+1}=0$.  
\item If $\tilde{r}^{n+1} < \exp\left(E_{1}\left(\phi^{n+1}\right)\right)$ and $S-\exp\left(E_{1}(\phi)\right) < 0$, we set $\theta_{0}^{n+1}=\frac{S-\exp\left(E_{1}(\phi^{n+1})\right)}{\tilde{r}^{n+1}-\exp\left(E_{1}(\phi^{n+1})\right)}$.
	\end{enumerate}
Then, \eqref{eq:ESAV1-cond_zeta-BDF1} (resp. \eqref{eq:ESAV1-cond_zeta-BDF2}) for BDF$1$ (resp. BDF$2$) scheme is satisfied in all cases above and  $\theta_0^{n+1}\in \mathcal{W}$.
Moreover, we have $r^{n+1}>0$, and the scheme \eqref{eq:R-ESAV1-BDFk-1}-\eqref{eq:R-ESAV1-BDFk-4} with the above choice of  $\theta_0^{n+1}$ satiesfies unconditionally energy stability in the sense that:\\
First-order:
\begin{equation}\label{eq:ESAV1-stability-BDF1}
	R_{R-ESAV-BDF1}^{n+1} - R_{R-ESAV-BDF1}^{n} \leq -\delta t (1-\gamma) \left(\mathcal{G} \mu^{n+1}, \mu^{n+1}\right) \leq 0,
\end{equation}
where $R_{R-ESAV-BDF1}^{n+1}=\frac{1}{2}\left(\mathcal{L} \phi^{n+1}, \phi^{n+1}\right) +\ln(r^{n+1})$;\\
Second-order:
\begin{equation}\label{eq:ESAV1-stability-BDF2}
	R_{R-ESAV-BDF2}^{n+1} - R_{R-ESAV-BDF2}^{n} \leq -\delta t (1-\gamma) \left(\mathcal{G} \mu^{n+1}, \mu^{n+1}\right)\leq 0,
\end{equation}
where 
$R_{R-ESAV-BDF2}^{n+1}=\frac{1}{4}\left(\left(\mathcal{L} \phi^{n+1}, \phi^{n+1}\right) + \left(\mathcal{L}\left(2\phi^{n+1}-\phi^{n}\right), 2\phi^{n+1}-\phi^{n}\right)\right)+\frac{1}{2}\left(3\ln(r^{n+1})-\ln(r^{n})\right)$.
Furthermore, we have 
\begin{equation}
	r^{n+1}\le \exp\left(E_{1}(\phi^{n+1})\right), \quad\forall n\ge 0.
\end{equation}
\end{thm}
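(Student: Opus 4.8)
The plan is to verify the claims in the order they are stated: first the case analysis showing $\theta_0^{n+1}\in\mathcal{W}$, then positivity of $r^{n+1}$, then the two energy inequalities, and finally the one-sided bound $r^{n+1}\le\exp(E_1(\phi^{n+1}))$. For the membership $\theta_0^{n+1}\in\mathcal{W}$, I would work directly with the reformulated inequality \eqref{eq:ESAV1-cond_zeta-BDF1} (resp. \eqref{eq:ESAV1-cond_zeta-BDF2}), which is equivalent to the defining condition of $\mathcal{W}$ after substituting \eqref{eq:R-ESAV1-BDFk-4} and exponentiating. In cases 1--3 the right-hand side $S-\exp(E_1(\phi^{n+1}))$ is $\ge 0$ while the coefficient $\tilde r^{n+1}-\exp(E_1(\phi^{n+1}))$ on the left is $\le 0$ (using that $\delta t\gamma(\mathcal{G}\mu^{n+1},\mu^{n+1})\ge 0$, hence $S\ge\tilde r^{n+1}$ in case 2, and the explicit sign hypothesis in case 3), so $\theta_0^{n+1}=0$ trivially satisfies the inequality, which reads $0\le S-\exp(E_1(\phi^{n+1}))$. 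In case 4 the quotient is chosen to make \eqref{eq:ESAV1-cond_zeta-BDF1} an equality; one must check $\theta_0^{n+1}\in[0,1]$: it is nonnegative because numerator and denominator are both negative, and it is $\le 1$ because $S\ge\tilde r^{n+1}$ (again from $\delta t\gamma(\mathcal{G}\mu^{n+1},\mu^{n+1})\ge 0$) forces $S-\exp(E_1(\phi^{n+1}))\ge\tilde r^{n+1}-\exp(E_1(\phi^{n+1}))$, and dividing by the negative denominator flips the inequality.

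For positivity of $r^{n+1}$: in the cases with $\theta_0^{n+1}=0$ we have $r^{n+1}=\exp(E_1(\phi^{n+1}))>0$ outright; in case 4, $r^{n+1}=\theta_0^{n+1}\tilde r^{n+1}+(1-\theta_0^{n+1})\exp(E_1(\phi^{n+1}))$ with $\theta_0^{n+1}\in[0,1]$, and since $\mathcal{W}$ forces $\ln(r^{n+1})$ to be well defined (the defining inequality bounds $\ln(r^{n+1})$ above but the convex combination is automatically positive as long as $\tilde r^{n+1}$ is, which follows inductively from the ESAV update \eqref{eq:R-ESAV1-BDFk-3} since $\ln(\tilde r^{n+1})$ is determined by a finite recursion from $\ln(r^n),\dots$), we get $r^{n+1}>0$. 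I would phrase this as a short induction.

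For the energy stability \eqref{eq:ESAV1-stability-BDF1}: take the inner product of \eqref{eq:R-ESAV1-BDFk-1} with $\delta t\,\mu^{n+1}$ and of \eqref{eq:R-ESAV1-BDFk-2} with $\alpha_k\phi^{n+1}-A_k(\phi^n)$, then add \eqref{eq:R-ESAV1-BDFk-3} multiplied by $\delta t$; the $U$-terms cancel exactly by construction, leaving $\tfrac12(\mathcal{L}\phi^{n+1},\phi^{n+1})-\tfrac12(\mathcal{L}\phi^n,\phi^n)+\ln(\tilde r^{n+1})-\ln(r^n)\le -\delta t(\mathcal{G}\mu^{n+1},\mu^{n+1})$ for BDF1 (using the elementary identity $(a-b,a)=\tfrac12|a|^2-\tfrac12|b|^2+\tfrac12|a-b|^2$ and discarding the nonnegative remainder; for BDF2 one uses the standard G-stability quadratic-form identity for the BDF2 operator, which produces exactly $R^{n+1}_{R-ESAV-BDF2}$). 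Then I replace $\ln(\tilde r^{n+1})$ by $\ln(r^{n+1})$ using the defining inequality of $\mathcal{W}$, which contributes at most $+\delta t\gamma(\mathcal{G}\mu^{n+1},\mu^{n+1})$, giving the stated bound with factor $(1-\gamma)$. The final inequality $r^{n+1}\le\exp(E_1(\phi^{n+1}))$ is immediate from the case analysis: it holds with equality in cases 1--3, and in case 4, $r^{n+1}$ is a convex combination of $\tilde r^{n+1}$ and $\exp(E_1(\phi^{n+1}))$ with $\tilde r^{n+1}<\exp(E_1(\phi^{n+1}))$, so $r^{n+1}<\exp(E_1(\phi^{n+1}))$.

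The main obstacle is the BDF2 energy identity: unlike BDF1, rewriting $\tfrac{\alpha_2\phi^{n+1}-A_2(\phi^n)}{\delta t}$ tested against $\mu^{n+1}$ does not telescope elementarily, and one must invoke the G-stability / Nevanlinna--Odeh type quadratic form to produce the specific combination $\tfrac14\big((\mathcal{L}\phi^{n+1},\phi^{n+1})+(\mathcal{L}(2\phi^{n+1}-\phi^n),2\phi^{n+1}-\phi^n)\big)+\tfrac12(3\ln r^{n+1}-\ln r^n)$ and to control the cross terms correctly; matching the scalar $\ln$-equation \eqref{eq:R-ESAV1-BDFk-3} to the same quadratic structure (so the $U$-cancellation survives at the BDF2 level) is the delicate bookkeeping step, and it is also where the precise form of the $\mathcal{W}$-condition \eqref{eq:set-condition-ESAV1-BDF2} with the factor $\tfrac32$ is used.
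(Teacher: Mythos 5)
Your proposal is correct and follows essentially the same route as the paper's (much terser) proof: case-by-case verification that $\theta_0^{n+1}\in\mathcal{W}$, positivity of $\tilde r^{n+1}$ from the logarithmic update \eqref{eq:R-ESAV1-BDFk-3} followed by the convex-combination argument, the standard inner-product/G-stability energy estimate with the $\mathcal{W}$-inequality supplying the $\gamma$-correction, and the final bound $r^{n+1}\le\exp\left(E_1(\phi^{n+1})\right)$ from the convexity of \eqref{eq:R-ESAV1-BDFk-4}. The only slip is the parenthetical claim that the coefficient $\tilde r^{n+1}-\exp\left(E_1(\phi^{n+1})\right)$ is $\le 0$ in cases 1--3 (it is positive in case 2), but this is harmless because with $\theta_0^{n+1}=0$ the left-hand side vanishes and you only need $S-\exp\left(E_1(\phi^{n+1})\right)\ge 0$, which you do establish in each case.
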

\begin{proof}
It can be verified easily that the above choice of $\theta_{0}^{n+1}$ satiesfies \eqref{eq:ESAV1-cond_zeta-BDF1} (resp. \eqref{eq:ESAV1-cond_zeta-BDF2}) for BDF$1$ (resp. BDF$2$) scheme in all cases such that $\theta_0^{n+1}\in \mathcal{W}$.

We can obtain that $\tilde r^{n+1} > 0$  from \eqref{eq:R-ESAV1-BDFk-3}, and thanks to $\exp\left(E_{1}(\phi^{n+1})\right)>0$, we have $r^{n+1}>0$.

Taking the inner product of \eqref{eq:R-ESAV1-BDFk-1}-\eqref{eq:R-ESAV1-BDFk-2} with $\mu^{n+1}$ and $\frac{\phi^{n+1}-\phi^{n}}{\delta t}$ (resp. $\frac{3 \phi^{n+1}-4\phi^{n}+\phi^{n-1}}{2\delta t}$), respectively, combining them with the equation \eqref{eq:R-ESAV1-BDFk-3} and \eqref{eq:set-condition-ESAV1-BDF1} (resp. \eqref{eq:set-condition-ESAV1-BDF2}), we can derive the discrete energy dissipation law \eqref{eq:ESAV1-stability-BDF1} (resp. \eqref{eq:ESAV1-stability-BDF2}) for BDF$1$ (resp. BDF$2$) scheme. 

For Cases 1-3,  we have $\theta_0^{n+1}=0$ so $r^{n+1}= E_{1}(\phi^{n+1})$. 
For Case 4, since 
$$\theta_{0}^{n+1}=\frac{S-\exp\left(E_{1}(\phi^{n+1})\right)}{\tilde{r}^{n+1}-\exp\left(E_{1}(\phi^{n+1})\right)}\in [0,1]$$ and $\tilde r^{n+1}<  \exp\left(E_{1}(\phi^{n+1})\right)$, we can obtain that $r^{n+1}\le \exp\left(E_{1}(\phi^{n+1})\right)$ from \eqref{eq:R-ESAV1-BDFk-4}. 
The proof is complete.
\end{proof}

\begin{remark}
 	To prevent the solution ``blowing up'' due to the rapid growth of exponential function, we can add a positive constant $C$ to redefine the exponential SAV $r(t) = \exp\left(\frac{E_{1}(\phi)}{C}\right)$, which is similar to \cite{liu2020exponential}. 
\end{remark}

 \subsection{The application of R-ESAV-1 approach for gradient flows of multiple functions}
 We provide below the R-ESAV-1 approach for gradient flows of multiple functions by considering the following energy functional
  \begin{equation}
E\left(\Phi\right)=\sum_{i,j=1}^{m}\frac{1}{2}d_{ij}(\phi_{i}, \mathcal{L} \phi_{j})+E_{1}\left(\Phi\right),
\end{equation}
where $\Phi=\left[\phi_{1}, \phi_{2}, \cdots, \phi_{m}\right]^{T}$, $\mathcal{L}$ is a self-adjoint linear positive definite operator, and the constant matrix $(d_{ij}), i, j=1, \cdots, m$ is symmetric positive definite. 
We set $W_{i}\left(\Phi\right)=\frac{\delta E_{1}\left(\Phi\right)}{\delta \phi_{i}}$, then the associated gradient flow is given by 
\begin{equation} \label{eq:gradient-flow-multiple}
	\left\{\begin{aligned}
		& \frac{\partial \phi_{i}}{\partial t}=-\mathcal{G} \mu_{i}, \\
		& \mu_{i}=\frac{\delta E}{\delta \phi_{i}}=\sum_{j=1}^{m}d_{ij} \mathcal{L} \phi_{j}+Q_{i}\left(\Phi\right),
\end{aligned}\right.
\end{equation}
where $\mathcal{G}$ is a nonnegative operator.
Taking the inner products of \eqref{eq:gradient-flow-multiple} with $\mu_{i}$ and $\frac{\partial \phi_{i}}{\partial t}$ respectively, summing over $i$, and thaks to the self-adjoint of $\mathcal{L}$ and $d_{ij}=d_{ji}$, we have the energy dissipation law as follows
\begin{equation}
\frac{\mathrm{d}}{\mathrm{d} t} E\left(\Phi\right)=\frac{\mathrm{d}}{\mathrm{d} t}\left\{\frac{1}{2} \sum_{i, j=1}^{m} d_{i j}\left(\phi_{i}, \mathcal{L} \phi_{j}\right)+E_{1}\left(\Phi\right)\right\}=-\sum_{i=1}^{m}\left(\mathcal{G} \mu_{i}, \mu_{i}\right) \leq 0.
\end{equation}
 
We construct numerical schemes for gradient flow of multiple functions by using R-ESAV-1 approach. 
 Introducing an exponential SAV 
 \begin{equation}
 	r(t)=\exp\left(E_{1}\left(\Phi\right)\right),
 \end{equation}
then \eqref{eq:gradient-flow-multiple} can be rewritten as 
\begin{equation}\label{eq:gradient-flow-multiple-ESAV} 
\left\{\begin{aligned}
& \frac{\partial \phi_{i}}{\partial t} =\mathcal{G} \mu_{i}, \\ 
& \mu_{i} =\sum_{j=1}^{m}d_{ij} \mathcal{L} \phi_{j}+\frac{r}{\exp\left(E_{1}\left(\Phi\right)\right)} Q_{i}\left(\Phi\right), \\ 
& \frac{\mathrm{d}\ln(r)}{\mathrm{d} t} =\frac{r}{\exp\left(E_{1}\left(\Phi\right)\right)}\sum_{i=1}^{m}\left(Q_{i}\left(\Phi\right), \frac{\partial \phi_{i}}{\partial t} \right).
\end{aligned}\right. 
\end{equation}
Denote 
$$U_{i}(r, \Phi)=\frac{r}{\exp\left(E_{1}\left(\Phi\right)\right)} Q_{i}\left(\Phi\right),$$
then the system \eqref{eq:gradient-flow-multiple-ESAV} can be simplified into
\begin{equation}\label{eq:gradient-flow-multiple-ESAV-1} 
\left\{\begin{aligned}
& \frac{\partial \phi_{i}}{\partial t} =\mathcal{G} \mu_{i}, \\ 
& \mu_{i} =\sum_{j=1}^{m}d_{ij} \mathcal{L} \phi_{j}+U_{i}(r, \Phi), \\ 
& \frac{\mathrm{d}\ln(r)}{\mathrm{d} t} =\sum_{i=1}^{m} \left(U_{i}(r, \Phi), \frac{\partial \phi_{i}}{\partial t} \right).
\end{aligned}\right. 
\end{equation}
Taking the inner product of the first two equations in \eqref{eq:gradient-flow-multiple-ESAV-1} with $\mu_{i}$ and $\frac{\partial \phi_{i}}{\partial t}$, respectively, 
combining them with the third equation in \eqref{eq:gradient-flow-multiple-ESAV-1}, and summing over $i$, we obtain the equivalent energy dissipation law
\begin{equation}
\frac{\mathrm{d}}{\mathrm{d} t}\left\{\frac{1}{2} \sum_{i, j=1}^{m} d_{i j}\left(\phi_{i}, \mathcal{L} \phi_{j}\right)+\ln(r)\right\}=\frac{\mathrm{d}}{\mathrm{d} t} E\left(\Phi\right)=-\sum_{i=1}^{m}\left(\mathcal{G} \mu_{i}, \mu_{i}\right) \leq 0.
\end{equation}

Next we can construct numerical scheme based on first version of R-ESAV approach and Crank-Nicolson formula (R-ESAV-1/CN) as follows. 
 
Given $r^{n-1}, r^{n}, \Phi^{n-1}, \Phi^{n}$, we determine $r^{n+1}, \Phi^{n+1}$ via two steps as follows:

\textbf{Step 1:} Compute an intermediate solution $(\tilde{r}^{n+1}, \Phi^{n+1})$ by using  the ESAV-1 approach:
\begin{eqnarray}
\label{eq:multiple-R-ESAV1-CN-1}
& & \frac{\phi_{i}^{n+1}-\phi_{i}^{n}}{\delta t} =-\mathcal{G} \mu_{i}^{n+\frac{1}{2}}, \\
\label{eq:multiple-R-ESAV1-CN-2}
& & \mu_{i}^{n+\frac{1}{2}} =\sum_{j=1}^{m}d_{ij} \mathcal{L} \phi_{j}^{n+\frac{1}{2}}+U_{i}\left(r^{*, n+\frac{1}{2}}, \Phi^{*, n+\frac{1}{2}}\right), \\ 
\label{eq:multiple-R-ESAV1-CN-3}
& & \frac{\ln(\tilde{r}^{n+1})-\ln(r^{n})}{\delta t} =\sum_{i=1}^{m} \left(U_{i}\left(r^{*, n+\frac{1}{2}}, \Phi^{*, n+\frac{1}{2}}\right), \frac{\phi_{i}^{n+1}-\phi_{i}^{n}}{\delta t}\right), 
\end{eqnarray}
where $g^{n+\frac{1}{2}}=\frac{g^{n+1}+g^{n}}{2}$ and $g^{*, n+\frac{1}{2}}$ can be any second-order explicit approximation of $g(t^{n+1/2})$, such as $g^{*, n+\frac{1}{2}}=\frac{3}{2}g^{n}-\frac{1}{2}g^{n-1}$.

\textbf{Step 2:} Update $r^{n+1}$ via relaxation factor as follows: 
\begin{equation}\label{eq:multiple-R-ESAV1-CN-4}
	r^{n+1} = \theta_{0}^{n+1} \tilde{r}^{n+1} + (1-\theta_{0}^{n+1})\exp\left(E_{1}(\Phi^{n+1})\right), \quad \theta_{0}^{n+1} \in \mathcal{W},
\end{equation}
where, $\mathcal{W}$ is a set defined as follows: 
\begin{equation}\label{eq:multiple-set-condition-ESAV1-CN}
	\mathcal{W}=\left\lbrace \theta\in [0,1] \; s.t. \; \ln(r^{n+1}) - \ln(\tilde r^{n+1}) \leq \delta t \gamma \sum_{i=1}^{m}\left(\mathcal{G} \mu_{i}^{n+\frac{1}{2}}, \mu_{i}^{n+\frac{1}{2}}\right) \right\rbrace,
\end{equation}
with $\gamma \in [0, 1]$ is a adjustable parameter. 

Inserting \eqref{eq:multiple-R-ESAV1-CN-4} into the inequality of \eqref{eq:multiple-set-condition-ESAV1-CN}, we observe that if we choose  $\theta_0^{n+1}$ satiesfies following condition
\begin{equation}\label{eq:multiple-ESAV1-cond_zeta-CN}
\left(\tilde{r}^{n+1}-\exp\left(E_{1}\left(\Phi^{n+1}\right)\right)\right)\theta_0^{n+1} \leq \exp \left(\delta t \gamma \sum_{i=1}^{m}\left(\mathcal{G} \mu_{i}^{n+\frac{1}{2}}, \mu_{i}^{n+\frac{1}{2}}\right) + \ln(\tilde{r}^{n+1})\right)-\exp(E_{1}\left(\Phi^{n+1})\right),
\end{equation}
then $\theta_0^{n+1}\in \mathcal{W}$. 
Denote $S=\exp \left(\delta t \gamma \sum_{i=1}^{m}\left(\mathcal{G} \mu_{i}^{n+\frac{1}{2}}, \mu_{i}^{n+\frac{1}{2}}\right) + \ln(\tilde{r}^{n+1})\right)$, then we can choose $\theta_{0}^{n+1}$ according to Theorem \ref{Th:ESAV1-stability} and the scheme \eqref{eq:multiple-R-ESAV1-CN-1}-\eqref{eq:multiple-R-ESAV1-CN-4} satisfies the unconditional energy stability in the sense that
\begin{equation}\label{eq:multiple-ESAV1-stability-CN}
	R_{R-ESAV-CN}^{n+1} - R_{R-ESAV-CN}^{n} \leq -\delta t (1-\gamma) \sum_{i=1}^{m}\left(\mathcal{G} \mu_{i}^{n+\frac{1}{2}}, \mu_{i}^{n+\frac{1}{2}}\right) \leq 0,
\end{equation}
where $R_{R-ESAV-CN}^{n+1}=\frac{1}{2} \sum_{i, j=1}^{m} d_{i j}\left(\phi_{i}^{n+1}, \mathcal{L} \phi_{j}^{n+1}\right) +\ln(r^{n+1})$.

\subsection{Extension to the multiple ESAV-1 approach}
 \label{sec:R-ESAV-multiple}
 In this subsection, we present how to construct relaxed multiple ESAV-1 (R-MESAV-1) schemes for gradient flow, where the model may include disparate terms such that original schemes with only one SAV has limitation on describing the different disparate evolution processes and may require overly small time steps to obtain accurate numerical solution \cite{cheng2018multiple}.
 
Without losing generality, we study gradient flow with  two disparate  nonlinear terms as follows and it can be easily extended to more than two disparate nonlinear terms 
\begin{equation} \label{eq:model-problem-MESAV}
	\left\{\begin{aligned}
		& \frac{\partial \phi}{\partial t}=-\mathcal{G} \mu, \\
		& \mu=\mathcal{L} \phi+F_{1}^{\prime}(\phi) + F_{2}^{\prime}(\phi),
	\end{aligned}\right.
\end{equation}
where $\mathcal{L}$  is a self-adjoint  linear  elliptic operator, $F_{1}(\phi), F_{2}(\phi)$ are nonlinear potential function,  $\mathcal{G}$ is a linear positive definite operator.  
The system \eqref{eq:model-problem-MESAV} satisfies the following energy dissipation law
\begin{align}
	\frac{\mathrm{d} E(\phi)}{\mathrm{d} t} =-\left(\mathcal{G}\mu, \mu\right),
\end{align} 
where the total energy is 
\begin{equation}
	E(\phi)=\frac{1}{2}(\mathcal{L} \phi, \phi) + \int_{\Omega} F_{1}(\phi) \mathrm{d} \boldsymbol{x}+\int_{\Omega} F_{2}(\phi) \mathrm{d} \boldsymbol{x}.
\end{equation}

 We first consider relaxed MESAV approach based on first ESAV approach. 
 Setting $E_{1}(\phi)=\int_{\Omega} F_{1}(\phi) \mathrm{d} \boldsymbol{x},\, E_{2}(\phi)=\int_{\Omega} F_{2}(\phi) \mathrm{d} \boldsymbol{x}$ and introducing two SAVs $r_{1}(t)=\exp\left(E_{1}(\phi)\right),\, r_{2}(t)=\exp\left(E_{2}(\phi)\right)$, we can rewrite the system \eqref{eq:model-problem-MESAV} as
 \begin{equation} \label{eq:model-problem-MESAV1}
	\left\{\begin{aligned}
		& \frac{\partial \phi}{\partial t}=-\mathcal{G} \mu, \\
		& \mu=\mathcal{L} \phi+ \frac{r_{1}(t)}{\exp\left(E_{1}(\phi)\right)}F_{1}^{\prime}(\phi) + \frac{r_{2}(t)}{\exp\left(E_{2}(\phi)\right)}F_{2}^{\prime}(\phi),\\
		& \frac{\mathrm{d} r_{1}(t)}{\mathrm{d} t}=\frac{r_{1}^{2}}{\exp \left(E_{1}(\phi)\right)} \left( F_{1}^{\prime}(\phi), \phi_{t} \right), \\
		& \frac{\mathrm{d} r_{2}(t)}{\mathrm{d} t}=\frac{r_{2}^{2}}{\exp \left(E_{2}(\phi)\right)} \left( F_{2}^{\prime}(\phi), \phi_{t} \right).
	\end{aligned}\right.
\end{equation}
Denote $U_{1}(r_{1}, \phi)=\frac{r_{1}}{\exp \left(E_{1}(\phi)\right)} F_{1}^{\prime}(\phi)$ and $U_{2}(r_{2}, \phi)=\frac{r_{2}}{\exp \left(E_{2}(\phi)\right)} F_{2}^{\prime}(\phi)$, then \eqref{eq:model-problem-MESAV1} can be transformed as 
\begin{equation}
\left\{\begin{aligned}\label{eq:gradient-flow-MESAV} 
&\frac{\partial \phi}{\partial t} =-\mathcal{G} \mu, \\ 
&\mu =\mathcal{L} \phi+U_{1}(r_{1}, \phi)+U_{2}(r_{2}, \phi), \\ 
&\frac{\mathrm{d} \ln (r_{1})}{\mathrm{d} t}=\left(U_{1}(r_{1}, \phi), \phi_{t}\right), \\
&\frac{\mathrm{d} \ln (r_{2})}{\mathrm{d} t}=\left(U_{2}(r_{2}, \phi), \phi_{t}\right).
\end{aligned}\right.
\end{equation}

Then we can construct R-MESAV-1/CN schemes inspired by the idea of relaxation factor.

Given $\phi^{n-1}, \phi^{n}, r_{1}^{n-1}, r_{1}^{n}, r_{2}^{n-1}, r_{2}^{n}$, we determine $\phi^{n+1}, r_{1}^{n+1}, r_{2}^{n+1}$ via two steps as follows:

\textbf{Step 1:} Compute an intermediate solution $(\phi^{n+1}, \tilde{r}_{1}^{n+1}, \tilde{r}_{2}^{n+1})$ by using  the MESAV approach:
\begin{eqnarray}
\label{eq:R-MESAV1-CN-1}
& & \frac{\phi^{n+1}-\phi^{n}}{\delta t} =-\mathcal{G} \mu^{n+\frac{1}{2}}, \\
\label{eq:R-MESAV1-CN-2}
& & \mu^{n+\frac{1}{2}} =\mathcal{L} \phi^{n+\frac{1}{2}}+U_{1}\left(r_{1}^{*, n+\frac{1}{2}}, \phi^{*, n+\frac{1}{2}}\right)+U_{2}\left(r_{2}^{*, n+\frac{1}{2}}, \phi^{*, n+\frac{1}{2}}\right), \\ 
\label{eq:R-MESAV1-CN-3}
& & \frac{\ln(\tilde{r}_{1}^{n+1})-\ln(r_{1}^{n})}{\delta t} =\left(U_{1}\left(r_{1}^{*, n+\frac{1}{2}}, \phi^{*, n+\frac{1}{2}}\right), \frac{\phi^{n+1}-\phi^{n}}{\delta t}\right), \\
\label{eq:R-MESAV1-CN-4}
& & \frac{\ln(\tilde{r}_{2}^{n+1})-\ln(r_{2}^{n})}{\delta t} =\left(U_{2}\left(r_{2}^{*, n+\frac{1}{2}}, \phi^{*, n+\frac{1}{2}}\right), \frac{\phi^{n+1}-\phi^{n}}{\delta t}\right).
\end{eqnarray}

\textbf{Step 2:} Update the SAVs $r_{1}^{n+1}, r_{2}^{n+1}$ via relaxation factor as follows: 
\begin{equation}\label{eq:R-MESAV1-CN-5}
\begin{aligned}
& r_{1}^{n+1} = \theta_{0}^{n+1} \tilde{r}_{1}^{n+1} + (1-\theta_{0}^{n+1})\exp\left(E_{1}(\phi^{n+1})\right), \quad \theta_{0}^{n+1} \in \mathcal{W}, \\
& r_{2}^{n+1} = \theta_{0}^{n+1} \tilde{r}_{2}^{n+1} + (1-\theta_{0}^{n+1})\exp\left(E_{2}(\phi^{n+1})\right), \quad \theta_{0}^{n+1} \in \mathcal{W},
\end{aligned}
\end{equation}
where $\mathcal{W}$ is a set defined as follows: 
\begin{equation}\label{eq:set-condition-MESAV1-CN}
	\mathcal{W}=\left\lbrace \theta\in [0,1] \; s.t. \; \ln(r_{1}^{n+1})+\ln(r_{2}^{n+1}) - \ln(\tilde r_{1}^{n+1}) - \ln(\tilde r_{2}^{n+1}) \leq \delta t \gamma \left(\mathcal{G} \mu^{n+\frac{1}{2}}, \mu^{n+\frac{1}{2}}\right) \right\rbrace
\end{equation}
with $\gamma \in [0, 1]$ is a adjustable parameter. 

We explain below how to choose $\theta_{0}^{n+1}$. 
Plugging \eqref{eq:R-MESAV1-CN-5} into  \eqref{eq:set-condition-MESAV1-CN}, we derive that if we choose  $\theta_0^{n+1}$ such that 
\begin{equation}\label{eq:MESAV1-cond_zeta-CN}
\begin{aligned}
& \left[\left(\tilde{r}_{1}^{n+1}-\exp\left(E_{1}\left(\phi^{n+1}\right)\right)\right)\theta_0^{n+1}+\exp\left(E_{1}(\phi^{n+1})\right)\right] \left[\left(\tilde{r}_{2}^{n+1}-\exp\left(E_{2}\left(\phi^{n+1}\right)\right)\right)\theta_0^{n+1}+\exp\left(E_{2}(\phi^{n+1})\right)\right]\\
& \leq \exp \left(\delta t \gamma \left(\mathcal{G} \mu^{n+\frac{1}{2}}, \mu^{n+\frac{1}{2}}\right) + \ln(\tilde{r}_{1}^{n+1}) + \ln(\tilde{r}_{2}^{n+1})\right),
\end{aligned}
\end{equation}
then $\theta_0^{n+1}\in \mathcal{W}$. 
And $\theta_{0}^{n+1}$ can be regarded as a solution of the optimization problem as follows
\begin{equation}\label{eq: R-ESAV-2-quadratic}
\theta_{0}^{n+1}=\min _{\theta \in[0,1]} \theta, \quad \text { s.t. } f(\theta)=a_{1}a_{2} \theta^{2}+(a_1b_2+a_2b_1) \theta+ b_1 b_2-c \leq 0,
\end{equation}
where the coefficients are 
\begin{equation*}
\begin{array}{l}
a_{1}=\tilde{r}_{1}^{n+1}-\exp\left(E_{1}\left(\phi^{n+1}\right)\right), \quad b_{1}=\exp\left(E_{1}(\phi^{n+1})\right), \\ 
a_{2}=\tilde{r}_{2}^{n+1}-\exp\left(E_{2}\left(\phi^{n+1}\right)\right), \quad b_{2}=\exp\left(E_{2}(\phi^{n+1})\right), \\ 
c=\exp \left(\delta t \gamma \left(\mathcal{G} \mu^{n+\frac{1}{2}}, \mu^{n+\frac{1}{2}}\right) + \ln(\tilde{r}_{1}^{n+1}) + \ln(\tilde{r}_{2}^{n+1})\right).\end{array}
\end{equation*}
Denote 
\begin{eqnarray*}
	& \theta_{1}=\frac{-(a_1b_2+a_2b_1)-\sqrt{(a_1b_2+a_2b_1)^2-4a_1a_2(b_1 b_2-c)}}{2a_1a_2}, \\
	& \theta_{2}=\frac{-(a_1b_2+a_2b_1)+\sqrt{(a_1b_2+a_2b_1)^2-4a_1a_2(b_1 b_2-c)}}{2a_1a_2},
\end{eqnarray*}
and the next theorem summarizes the choice of $\theta_0^{n+1}$. 

\begin{thm}\label{Th:MESAV1-stability}
We choose $\theta_{0}^{n+1}$ in \eqref{eq:R-MESAV1-CN-5} as follows:
\begin{enumerate}
\item If $a_{1}=0$ and $a_{2}=0$, we set $\theta_{0}^{n+1}=0$.
\item If $a_1 > 0$ and $a_2 > 0$, we set $\theta_{0}^{n+1}=0$.
\item If $a_1 < 0$ and $a_2 < 0$, we set $\theta_{0}^{n+1}=\max \left\{0, \theta_{1}\right\}$.
\item If $a_1 a_2 < 0$ and $b_1b_2-c \leq 0$, we set $\theta_{0}^{n+1}=0$. 
\item If $a_1 a_2 < 0$ and $b_1b_2-c > 0$, we set $\theta_{0}^{n+1}=\theta_{1}$. 
\item If $a_{1}a_{2}=0$ and $a_1b_2+a_2b_1>0$, we set $\theta_{0}^{n+1}=0$.  
\item If $a_{1}a_{2}=0$ and $a_1b_2+a_2b_1<0$, we set $\theta_{0}^{n+1}=\max \left\{0, \frac{c-b_1b_2}{a_1b_2+a_2b_1}\right\}$.
	\end{enumerate}
Then, \eqref{eq:MESAV1-cond_zeta-CN} for scheme \eqref{eq:R-MESAV1-CN-1}-\eqref{eq:R-MESAV1-CN-5} is satisfied in all cases above and  $\theta_0^{n+1}\in \mathcal{W}$.
Moreover, we have $r_{1}^{n+1}>0, r_{2}^{n+1}>0$, and the scheme \eqref{eq:R-MESAV1-CN-1}-\eqref{eq:R-MESAV1-CN-5} with the above choice of  $\theta_0^{n+1}$ is unconditionally energy stable in the sense that:
\begin{equation}\label{eq:MESAV1-stability-CN}
	R_{R-MESAV-CN}^{n+1} - R_{R-MESAV-CN}^{n} \leq -\delta t (1-\gamma) \left(\mathcal{G} \mu^{n+1}, \mu^{n+1}\right) \leq 0,
\end{equation}
where $R_{R-MESAV-CN}^{n+1}=\frac{1}{2}\left(\mathcal{L} \phi^{n+1}, \phi^{n+1}\right) +\ln(r_{1}^{n+1})+\ln(r_{2}^{n+1})$. 
\end{thm}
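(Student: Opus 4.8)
The plan is to mirror the proof of Theorem~\ref{Th:ESAV1-stability}, reducing the update \eqref{eq:R-MESAV1-CN-5} to the scalar optimization \eqref{eq: R-ESAV-2-quadratic}. First I would fix notation and collect elementary facts. Write $g_i(\theta)=a_i\theta+b_i=(1-\theta)\exp(E_i(\phi^{n+1}))+\theta\,\tilde r_i^{n+1}$, so that $r_i^{n+1}=g_i(\theta_0^{n+1})$ by \eqref{eq:R-MESAV1-CN-5} and $f(\theta)=g_1(\theta)g_2(\theta)-c$. Arguing by induction on $n$ (with $r_i^0=\exp(E_i(\phi^0))>0$): if $r_1^n,r_2^n>0$, then \eqref{eq:R-MESAV1-CN-3}--\eqref{eq:R-MESAV1-CN-4} define $\ln\tilde r_i^{n+1}$ as finite real numbers, so $\tilde r_i^{n+1}>0$; since also $\exp(E_i(\phi^{n+1}))>0$, each $g_i$ is a convex combination of two positive numbers on $[0,1]$, hence $g_i(\theta)>0$ there, and in particular $r_i^{n+1}>0$ once $\theta_0^{n+1}\in[0,1]$ is fixed. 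Because $\gamma\ge0$ and $\mathcal{G}$ is positive definite, $\exp\!\big(\delta t\gamma(\mathcal{G}\mu^{n+1/2},\mu^{n+1/2})\big)\ge1$, so $c=\tilde r_1^{n+1}\tilde r_2^{n+1}\exp\!\big(\delta t\gamma(\mathcal{G}\mu^{n+1/2},\mu^{n+1/2})\big)\ge\tilde r_1^{n+1}\tilde r_2^{n+1}=g_1(1)g_2(1)$; thus $f(1)\le0$, so the feasible set $\{\theta\in[0,1]:f(\theta)\le0\}$ is nonempty and \eqref{eq: R-ESAV-2-quadratic} is well posed. Finally, expanding the product and dividing by $\tilde r_1^{n+1}\tilde r_2^{n+1}>0$ before taking logarithms shows that $f(\theta)\le0$, the inequality \eqref{eq:MESAV1-cond_zeta-CN}, and membership $\theta\in\mathcal{W}$ of \eqref{eq:set-condition-MESAV1-CN} are all equivalent.

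Second, I would verify case by case that the prescribed $\theta_0^{n+1}$ is exactly the smallest $\theta\in[0,1]$ with $f(\theta)\le0$. Since $f(1)\le0$, the discriminant $(a_1b_2+a_2b_1)^2-4a_1a_2(b_1b_2-c)$ is nonnegative whenever $a_1a_2\ne0$, so $\theta_1\le\theta_2$ are real. The analysis is driven by the sign of the leading coefficient $a_1a_2$ and the value $f(0)=b_1b_2-c$. If $a_1,a_2\ge0$ (Cases~1 and~2) then $\tilde r_i^{n+1}\ge\exp(E_i(\phi^{n+1}))$, which forces $c\ge\tilde r_1^{n+1}\tilde r_2^{n+1}\ge b_1b_2$ and hence $f(0)\le0$, so $\theta_0^{n+1}=0$ is optimal; the same conclusion holds in Case~4 (where $f(0)\le0$ is assumed) and in Case~6 (where $f$ is linear with positive slope and $f(1)\le0$, so $f(0)<f(1)\le0$). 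When $f(0)>0$: in Case~3, $a_1a_2>0$ so $f$ is an upward parabola and $f(1)\le0$ gives $\theta_1\le1\le\theta_2$, whence $\{f\le0\}\cap[0,1]=[\max\{0,\theta_1\},1]$ and the minimizer is $\max\{0,\theta_1\}$ (the $\max$ also absorbs the subcase $f(0)\le0$, where $\theta_1\le0$); in Case~5, $a_1a_2<0$ so $f$ is a downward parabola, $f(0)>0$ places $0$ strictly between its roots while $f(1)\le0$ places $1$ outside, and since dividing by $2a_1a_2<0$ reverses order the larger root is $\theta_1\in(0,1]$, which is the minimizer; in Case~7, $f$ is linear with negative slope and $f(1)\le0$, so $\{f\le0\}\cap[0,1]=[\max\{0,(c-b_1b_2)/(a_1b_2+a_2b_1)\},1]$ and the minimizer is $\max\{0,(c-b_1b_2)/(a_1b_2+a_2b_1)\}$. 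One also checks that these seven cases exhaust all sign patterns of $(a_1,a_2)$, and that in each the prescribed value lies in $[0,1]$ and satisfies $f(\theta_0^{n+1})\le0$; combined with the first paragraph this gives $\theta_0^{n+1}\in\mathcal{W}$ and $r_1^{n+1},r_2^{n+1}>0$.

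Third, for the energy law I would take the inner product of \eqref{eq:R-MESAV1-CN-1} with $\mu^{n+1/2}$ and of \eqref{eq:R-MESAV1-CN-2} with $(\phi^{n+1}-\phi^n)/\delta t$. Using self-adjointness of $\mathcal{L}$, $(\mathcal{L}\phi^{n+1/2},\phi^{n+1}-\phi^n)=\frac12\big[(\mathcal{L}\phi^{n+1},\phi^{n+1})-(\mathcal{L}\phi^n,\phi^n)\big]$, and substituting \eqref{eq:R-MESAV1-CN-3}--\eqref{eq:R-MESAV1-CN-4} for the two terms involving $U_1,U_2$, I obtain after multiplying by $\delta t$
\[
\tfrac12(\mathcal{L}\phi^{n+1},\phi^{n+1})+\ln\tilde r_1^{n+1}+\ln\tilde r_2^{n+1}-\Big[\tfrac12(\mathcal{L}\phi^{n},\phi^{n})+\ln r_1^{n}+\ln r_2^{n}\Big]=-\delta t\,(\mathcal{G}\mu^{n+1/2},\mu^{n+1/2}).
\]
Adding $\ln r_1^{n+1}+\ln r_2^{n+1}-\ln\tilde r_1^{n+1}-\ln\tilde r_2^{n+1}$ to both sides turns the left-hand side into $R_{R-MESAV-CN}^{n+1}-R_{R-MESAV-CN}^{n}$, and the defining inequality of $\mathcal{W}$ in \eqref{eq:set-condition-MESAV1-CN} bounds the added term by $\delta t\gamma(\mathcal{G}\mu^{n+1/2},\mu^{n+1/2})$; since $\gamma\le1$ and $(\mathcal{G}\mu^{n+1/2},\mu^{n+1/2})\ge0$, this yields \eqref{eq:MESAV1-stability-CN}.

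I expect the second step to be the main obstacle. One has to locate the roots $\theta_1,\theta_2$ relative to $[0,1]$ correctly when the leading coefficient $a_1a_2$ flips sign (so the parabola changes orientation and the smallest feasible point switches between a root and the endpoint $0$), keep track of which of the two displayed roots is the relevant one, and separately handle the degenerate linear subcases; the remaining bookkeeping is essentially identical to the proof of Theorem~\ref{Th:ESAV1-stability}.
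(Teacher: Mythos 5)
Your proof is correct and follows essentially the same route as the paper's: reduce the relaxation condition to the quadratic feasibility problem \eqref{eq: R-ESAV-2-quadratic}, use $f(1)\le 0$ to guarantee a nonempty feasible set, locate the minimal feasible $\theta$ by a sign analysis of the leading coefficient $a_1a_2$, and obtain positivity of $r_i^{n+1}$ and the Crank--Nicolson energy identity exactly as the paper does (your version is in fact more detailed, e.g.\ justifying $f(0)\le 0$ explicitly in Cases 1--2 and 6). The only nitpick is that your blanket claims that the discriminant is nonnegative and that $\theta_1\le\theta_2$ fail when $a_1a_2<0$ with $b_1b_2-c$ sufficiently negative (and the root ordering reverses there, as you yourself note later), but real roots are only invoked in Cases 3 and 5, where they do exist, so the argument stands.
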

\begin{proof} 
	We find optimal relaxation $\theta_{0}^{n+1}$ by discussing the coefficient of \eqref{eq: R-ESAV-2-quadratic} case by case. 
\begin{itemize}
  	\item If $a_{1}=0$ and $a_{2}=0$, we have $\theta_{0}^{n+1}=0$ obviously. 
 	\item If $a_{1}a_{2} \neq 0$, notice that 
$$f(1)=\tilde{r}_{1}^{n+1}\tilde{r}_{2}^{n+1}-\exp \left(\delta t \gamma \left(\mathcal{G} \mu^{n+\frac{1}{2}}, \mu^{n+\frac{1}{2}}\right) + \ln(\tilde{r}_{1}^{n+1}) + \ln(\tilde{r}_{2}^{n+1})\right)\leq 0,$$
 we have $1\in \mathcal{W}$ which means $\mathcal{W} \neq \emptyset. $ 
  \begin{itemize}
	\item If $a_1 a_2>0$, and thanks to $f(1)\leq 0$, we have $\theta_{0}^{n+1}=\max \left\{0, \theta_{1}\right\}$, then we can easily derive Case 2, 3. 
	\item If $a_1 a_2 < 0$, and $f(0)=b_1b_2-c \leq 0$, we have $\theta_{0}^{n+1}=0$. 
	\item If $a_1 a_2 < 0$, and $f(0)=b_1b_2-c > 0, f(1) \leq 0$, we have $\theta_{0}^{n+1}=\max\left\{\theta_{1}, \theta_{2}\right\}$. And since $\theta_{1} \geq \theta_{2}$, then $\theta_{0}^{n+1}=\theta_{1}$. 
\end{itemize}
	\item If $a_{1}a_{2}=0$, and $a_1b_2+a_2b_1>0$, thanks to $f(1) \leq 0$, we have $\theta_{0}^{n+1}=0$. \item If $a_{1}a_{2}=0$, and $a_1b_2+a_2b_1<0$, since $f(1) \leq 0$, then we have $\theta_{0}^{n+1}=\max \left\{0, \frac{c-b_1b_2}{a_1b_2+a_2b_1}\right\}$.
\end{itemize}


We derive from \eqref{eq:R-MESAV1-CN-3}-\eqref{eq:R-MESAV1-CN-4} that $\tilde r_{1}^{n+1} > 0, \tilde r_{2}^{n+1} > 0$, and thanks to $\exp\left(E_{1}(\phi^{n+1})\right)>0, \exp\left(E_{2}(\phi^{n+1})\right)>0$, we have $r_{1}^{n+1}>0, r_{2}^{n+1}>0$.

Taking the inner product of \eqref{eq:R-MESAV1-CN-1}-\eqref{eq:R-MESAV1-CN-2} with $\mu^{n+1}$ and $\frac{\phi^{n+1}-\phi^{n}}{\delta t}$, respectively, combining them with the equation \eqref{eq:R-MESAV1-CN-3}-\eqref{eq:R-MESAV1-CN-4} and \eqref{eq:set-condition-MESAV1-CN}, we can obtain the desired result \eqref{eq:MESAV1-stability-CN}. 
\end{proof}

\section{The second version of relaxed exponential SAV approach}
\label{sec:R-ESAV-2}
Inspired by the total energy based on exponential SAV approach in \cite{liu2021highly}, we construct the second version of relaxed exponential SAV approach(R-ESAV-2) for dissipative system in this section.

\subsection{The R-ESAV-2 approach for dissipative system}
More generally, we consider dissipative system
\begin{align}\label{eq:dissipative-system}
 \frac{\partial \phi}{\partial t}+\mathcal{A} \phi +g(\phi)=0, 
\end{align} 
where $\mathcal{A}$ is a positive operator and $g(\phi)$ is a semi-linear or quasi-linear operator. Assume it  satisfies the following energy dissipation law
\begin{align}
	\frac{\mathrm{d} E(\phi)}{\mathrm{d} t} =-\mathcal{K}(\phi),\label{eq: new2}
\end{align} 
where $E(\phi)>-C_{0}$ is a free energy and $\mathcal{K}(\phi)>0$ for all $\phi$.
Introducing a SAV $R(t)=\exp\left(E(\phi)\right)$, we transform the equation \eqref{eq:dissipative-system} into the equivalent system as follows
\begin{equation}\label{eq:dissipative-system-ESAV}
\left\{\begin{aligned}
& \frac{\partial \phi}{\partial t}+\mathcal{A} (\phi)+V(\xi)g(\phi)=0, \\
& \frac{\mathrm{d} R(t)}{\mathrm{d} t}=-R(t)\mathcal{K}(\phi),\\
& \xi = \frac{R(t)}{\exp\left(E(\phi)\right)},
\end{aligned}\right.
\end{equation}
where $V(\xi)$ is a function related to $\xi$ and $V(\xi) \equiv 1$ at a continuous level. 

Then we can construct R-ESAV-2/BDF$k$ ($1 \leq k \leq 6$) schemes inspired by the idea of relaxation factor.

Given $R^{n-k}, ..., R^{n}, \phi^{n-k}, ..., \phi^{n}$, we determine $R^{n+1}, \phi^{n+1}$ via two steps as follows:

 \textbf{Step 1:} Compute an intermediate solution $(\tilde{R}^{n+1}, \phi^{n+1})$ by using the ESAV approach:
\begin{eqnarray}
\label{eq:R-ESAV2-BDFk-1}
& & \frac{\alpha_{k} \phi^{n+1}-A_{k}\left(\phi^{n}\right)}{\delta t}+\mathcal{A} \phi^{n+1}+V_{k}(\xi^{n+1})g\left(B_{k}\left(\phi^{n}\right)\right)=0, \\
\label{eq:R-ESAV2-BDFk-2}
& & \frac{1}{\delta t}\left(\tilde{R}^{n+1}-R^{n}\right)=-\tilde{R}^{n+1} \mathcal{K}\left(B_{k}\left(\phi^{n}\right)\right), \\
\label{eq:R-ESAV2-BDFk-3}
& & \xi^{n+1}=\frac{\tilde{R}^{n+1}}{\exp\left(E\left(B_{k}\left(\phi^{n}\right)\right)\right)}, 	
\end{eqnarray}
where $\alpha_{k}$, $A_{k}$ and $B_{k}$ are defined as above and $V_{k}(\xi^{n+1})$ for $k$th-order ($1\leq k\leq 4$) scheme are following:\\
First-order:
$V_{1}(\xi^{n+1})=\xi^{n+1}$;\\
Second-order:
$V_{2}(\xi^{n+1})=\xi^{n+1}(2-\xi^{n+1})$;\\
Third-order:
$V_{3}\left(\xi^{n+1}\right)=\xi^{n+1}\left(3-3 \xi^{n+1}+\left(\xi^{n+1}\right)^{2}\right)$;\\
Forth-order:
$V_{4}\left(\xi^{n+1}\right)=\xi^{n+1}\left(2-\xi^{n+1}\right)\left(2-2 \xi^{n+1}+\left(\xi^{n+1}\right)^{2}\right)$.

\textbf{Step 2:} Update the SAV $R^{n+1}$ via relaxation factor as follows: 
\begin{equation} \label{eq:R-ESAV2-BDFk-4}
	R^{n+1} = \theta_{0}^{n+1} \tilde{R}^{n+1} + (1-\theta_{0}^{n+1})\exp\left(E\left(\phi^{n+1}\right)\right), \quad \theta_{0}^{n+1} \in \mathcal{W},
\end{equation}
where 
\begin{eqnarray}
\label{eq:set-condition-ESAV2}
\mathcal{W}=\left\lbrace \theta\in [0,1] \; s.t. \;   \frac{R^{n+1}-\tilde{R}^{n+1}}{\delta t} = -\gamma^{n+1} \tilde{R}^{n+1} \mathcal{K}( {\phi}^{n+1})+ \tilde{R}^{n+1}\mathcal{K}\left(B_{k}\left(\phi^{n}\right)\right)\right\rbrace,
\end{eqnarray}
and  $\gamma^{n+1} \geq 0$ is to be determined such that the set $\mathcal{W}$ is not empty. 

 Then we describe how to choose   $\theta_{0}^{n+1}$ and  $\gamma^{n+1}$. 
Insertting \eqref{eq:R-ESAV2-BDFk-4} into the equality of the set  \eqref{eq:set-condition-ESAV2}, we derive that if we choose  $\theta_0^{n+1}$ and $\gamma^{n+1}$ such that 
\begin{equation}\label{eq:ESAV2-cond_zeta}
\left(\tilde{R}^{n+1}-\exp\left(E(\phi^{n+1})\right)\right)\theta_0^{n+1} = \tilde{R}^{n+1}-\exp\left(E(\phi^{n+1})\right)-\delta t \gamma^{n+1} \tilde{R}^{n+1}\mathcal{K}( {\phi}^{n+1})+ \delta t \tilde{R}^{n+1} \mathcal{K}\left(B_{k}(\phi^{n})\right),
\end{equation}
then, $\theta_0^{n+1}\in \mathcal{W}$. The summation of the choice of $\theta_0^{n+1}$ and $\gamma^{n+1}$ are provided in  next theorem.

\begin{thm}\label{Th:ESAV2-stability}
The choice of $\theta_{0}^{n+1}$ in \eqref{eq:R-ESAV2-BDFk-4} and $\gamma^{n+1}$ in \eqref{eq:set-condition-ESAV2} are shown as follows:
\begin{enumerate}
\item If $\tilde{R}^{n+1} = \exp\left(E(\phi^{n+1})\right)$, we set $\theta_{0}^{n+1}=0$ and $\gamma^{n+1}=\frac{\mathcal{K}\left( B_{k}(\phi^{n})\right)}{\mathcal{K}( {\phi}^{n+1})}$.
\item If $\tilde{R}^{n+1} > \exp\left(E(\phi^{n+1})\right)$, we set $\theta_{0}^{n+1}=0$ and
\begin{equation}\label{eq:ESAV2-gamma}
\gamma^{n+1} = \frac{\tilde{R}^{n+1}-\exp\left(E(\phi^{n+1})\right)}{\delta t \tilde{R}^{n+1}\mathcal{K}(\phi^{n+1})} + \frac{\mathcal{K}\left( B_{k}(\phi^{n})\right)}{\mathcal{K}( {\phi}^{n+1})}.
\end{equation}
\item If $\tilde{R}^{n+1} < \exp\left(E(\phi^{n+1})\right)$ and $\tilde{R}^{n+1}-\exp\left(E(\phi^{n+1})\right)+ \delta t \tilde{R}^{n+1} \mathcal{K}\left( B_{k}(\phi^{n})\right) \geq 0$, we set $\theta_{0}^{n+1}=0$ and 	
$\gamma^{n+1}$ the same as \eqref{eq:ESAV2-gamma}. 	
\item  If $\tilde{R}^{n+1} < \exp\left(E(\phi^{n+1})\right)$ and $\tilde{R}^{n+1}-\exp\left(E(\phi^{n+1})\right)+ \delta t \tilde{R}^{n+1}\mathcal{K}\left( B_{k}(\phi^{n})\right)< 0$, we set $\theta_{0}^{n+1}=1-\frac{\delta t \tilde{R}^{n+1} \mathcal{K}\left( B_{k}(\phi^{n})\right)}{\exp\left(E(\phi^{n+1})\right)-\tilde{R}^{n+1}}$ and 
$\gamma^{n+1}=0$.
	\end{enumerate}
Then,  \eqref{eq:ESAV2-cond_zeta} is satisfied in all cases above and  $\theta_0^{n+1}\in \mathcal{W}$. 
Moreover, we have $R^{n+1}> 0,\, \xi_{k}^{n+1} > 0$, and the scheme \eqref{eq:R-ESAV2-BDFk-1}-\eqref{eq:R-ESAV2-BDFk-4} with the above choice of  $\theta_0^{n+1}$ and  $\gamma^{n+1}$  satiesfies unconditionally energy stability in the sense that
\begin{equation}\label{eq:ESAV2-stability}
	R^{n+1} - R^{n} = -\delta t \gamma^{n+1} \tilde{R}^{n+1} \mathcal{K}( {\phi}^{n+1})\leq 0,
\end{equation}
and more importantly we have 
\begin{equation}\label{eq:ESAV2-stability-1}
	ln\left(R^{n+1}\right) - \ln\left(R^{n}\right) \leq 0.
\end{equation}
Furthermore, we have 
\begin{equation}\label{eq:stability2}
	R^{n+1}\le \exp\left(E(\phi^{n+1})\right), \quad\forall n\ge 0.
\end{equation}


\end{thm}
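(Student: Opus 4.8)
The plan is to follow the four assertions of the theorem in order, the substantive part being the case analysis for the pair $(\theta_0^{n+1},\gamma^{n+1})$, which moreover clearly exhausts all possibilities (the sign of $\tilde R^{n+1}-\exp(E(\phi^{n+1}))$ is zero, positive, or negative, and in the last situation one further splits on the sign of $\tilde R^{n+1}-\exp(E(\phi^{n+1}))+\delta t\,\tilde R^{n+1}\mathcal K(B_k(\phi^n))$). For positivity, \eqref{eq:R-ESAV2-BDFk-2} gives $\tilde R^{n+1}\bigl(1+\delta t\,\mathcal K(B_k(\phi^n))\bigr)=R^n$, and since $\mathcal K>0$ the bracket is positive, so $\tilde R^{n+1}>0$ whenever $R^n>0$; starting from positive initial data this propagates by induction. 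Then $\xi^{n+1}=\tilde R^{n+1}/\exp(E(B_k(\phi^n)))>0$ is immediate, and once $\theta_0^{n+1}\in[0,1]$ is known, \eqref{eq:R-ESAV2-BDFk-4} exhibits $R^{n+1}$ as a convex combination of the positive numbers $\tilde R^{n+1}$ and $\exp(E(\phi^{n+1}))$, hence $R^{n+1}>0$.

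For admissibility, I would check case by case that the prescribed values solve the identity \eqref{eq:ESAV2-cond_zeta} --- which is linear in $\theta_0^{n+1}$ once $\gamma^{n+1}$ is fixed --- with $\gamma^{n+1}\ge0$ and $\theta_0^{n+1}\in[0,1]$; by the definition of $\mathcal W$ in \eqref{eq:set-condition-ESAV2} this is exactly the statement $\theta_0^{n+1}\in\mathcal W$ (hence $\mathcal W\neq\emptyset$). In Cases 1--3 one takes $\theta_0^{n+1}=0$ with the stated $\gamma^{n+1}$, which is designed precisely so that the right-hand side of \eqref{eq:ESAV2-cond_zeta} also vanishes; non-negativity of $\gamma^{n+1}$ is obvious in Case 1, follows from $\tilde R^{n+1}>\exp(E(\phi^{n+1}))$ in Case 2, and from the Case 3 sign hypothesis once \eqref{eq:ESAV2-gamma} is rewritten as a single fraction with positive denominator $\delta t\,\tilde R^{n+1}\mathcal K(\phi^{n+1})$. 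In Case 4 one sets $\gamma^{n+1}=0$ and solves \eqref{eq:ESAV2-cond_zeta} directly, obtaining the stated $\theta_0^{n+1}=1-\delta t\,\tilde R^{n+1}\mathcal K(B_k(\phi^n))/\bigl(\exp(E(\phi^{n+1}))-\tilde R^{n+1}\bigr)$: the denominator is positive since $\tilde R^{n+1}<\exp(E(\phi^{n+1}))$, the subtracted fraction is positive, and it is $\le1$ precisely by the Case 4 hypothesis, so $\theta_0^{n+1}\in[0,1]$.

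For the energy law, adding \eqref{eq:R-ESAV2-BDFk-2} to the equality defining $\mathcal W$ in \eqref{eq:set-condition-ESAV2} cancels the $\tilde R^{n+1}\mathcal K(B_k(\phi^n))$ contributions and leaves $(R^{n+1}-R^n)/\delta t=-\gamma^{n+1}\tilde R^{n+1}\mathcal K(\phi^{n+1})$, which is \eqref{eq:ESAV2-stability}; it is nonpositive because $\gamma^{n+1}\ge0$, $\tilde R^{n+1}>0$, $\mathcal K(\phi^{n+1})>0$. Since $R^{n+1},R^n>0$ and $R^{n+1}\le R^n$, applying $\ln$ gives \eqref{eq:ESAV2-stability-1}. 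For \eqref{eq:stability2}: in Cases 1--3 we have $\theta_0^{n+1}=0$, so $R^{n+1}=\exp(E(\phi^{n+1}))$; in Case 4 we have $\tilde R^{n+1}<\exp(E(\phi^{n+1}))$ and $\theta_0^{n+1}\in[0,1]$, so the convex combination \eqref{eq:R-ESAV2-BDFk-4} is bounded above by $\exp(E(\phi^{n+1}))$. Note the $\phi$-equation \eqref{eq:R-ESAV2-BDFk-1} plays no role here --- the statement concerns only $R$, $\tilde R$, $\xi$, $\theta_0^{n+1}$, $\gamma^{n+1}$.

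The argument requires no essential difficulty; the one place calling for care is the Case 4 verification that $\theta_0^{n+1}\in[0,1]$, where the bound must be traced back to the defining sign condition of that case while keeping careful track of which quantities ($\tilde R^{n+1}$, the $\exp$ factors, $\mathcal K$) are already known to be positive.
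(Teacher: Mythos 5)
Your proposal is correct and follows essentially the same route as the paper: induction via $\tilde R^{n+1}=R^n/(1+\delta t\,\mathcal K(B_k(\phi^n)))>0$ for positivity, adding \eqref{eq:R-ESAV2-BDFk-2} to the equality in \eqref{eq:set-condition-ESAV2} for the energy law, and the convex-combination argument split into Cases 1--3 versus Case 4 for \eqref{eq:stability2}. You simply spell out the case-by-case verification of \eqref{eq:ESAV2-cond_zeta} that the paper dismisses as "easily verified."
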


\begin{proof}
	It can be verified easily that the above choice of $\theta_{0}^{n+1}$ and $\gamma^{n+1}$ satiesfies \eqref{eq:ESAV2-cond_zeta} in all cases such that $\theta_0^{n+1}\in \mathcal{W}$.
	
Since $R^{0} > 0$. 
It follows from \eqref{eq:R-ESAV2-BDFk-2} that 
\begin{equation}\label{tilder}
	\tilde{R}^{1}=\frac{R^{0}}{1+\delta t \mathcal{K}\left(B_{k}(\phi^{0})\right)} > 0.
\end{equation}
Then we derive from \eqref{eq:R-ESAV2-BDFk-3} that $\xi_{k}^{1}> 0$, and we  derive from \eqref{eq:R-ESAV2-BDFk-4} that $R^{1} > 0$. 
Therefore, it is easy to obtain $\xi_{k}^{n+1}> 0$ and  $R^{n+1} > 0$ by induction method.

Then we obtain \eqref{eq:ESAV2-stability} by combining \eqref{eq:R-ESAV2-BDFk-2} and \eqref{eq:set-condition-ESAV2}. 

For Cases 1-3,  it can obtain that $\theta_0^{n+1}=0$, then we have $R^{n+1}= \exp\left(E(\phi^{n+1})\right)$. 
For Case 4, thanks to 
$\theta_{0}^{n+1}=1-\frac{\delta t \tilde{R}^{n+1} \mathcal{K}\left( B_{k}(\phi^{n})\right)}{\exp\left(E(\phi^{n+1})\right)-\tilde{R}^{n+1}}\in (0,1]$ and $\tilde R^{n+1}<  \exp\left(E(\phi^{n+1})\right)$, we derive that $R^{n+1}\le \exp\left(E(\phi^{n+1})\right)$ from \eqref{eq:R-ESAV2-BDFk-4} .
\end{proof}

 
\begin{remark}
As a further extension, firstly, we can also construct numerical schemes for gradient flows of multiple functions \eqref{eq:gradient-flow-multiple} by using the R-ESAV-2 approach, and $\theta_0^{n+1}, \gamma^{n+1}$ can be chosen  similarly  according to Theorem \ref{Th:ESAV2-stability}. 
Secondly, the R-ESAV-2 approach can be also extended to multiple ESAV form based on gradient flow with two disparate nonlinear terms \eqref{eq:model-problem-MESAV}. 
 Setting $E_{1}(\phi)=\frac{1}{2}(\mathcal{L} \phi, \phi) + \int_{\Omega} F_{1}(\phi) \mathrm{d} \boldsymbol{x},\, E_{2}(\phi)=\int_{\Omega} F_{2}(\phi) \mathrm{d} \boldsymbol{x}$ and introducing two SAVs $R_{1}(t)=\exp\left(E_{1}(\phi)\right),\, R_{2}(t)=\exp\left(E_{2}(\phi)\right)$, we can rewrite the equation \eqref{eq:model-problem-MESAV} as
\begin{equation}\label{eq:model-problem-reformulation-MESAV2}
	\left\{\begin{aligned}
		& \frac{\partial \phi}{\partial t}=-\mathcal{G} \mu, \\
		& \mu=\mathcal{L} \phi+V(\xi_{1})F_{1}^{\prime}(\phi)+V(\xi_{2})F_{2}^{\prime}(\phi), \\
		& \frac{\mathrm{d}\ln\left(R_{1}(t)\right)}{\mathrm{d}t}=-\left(\mathcal{G}\frac{\delta E_{1}}{\delta \phi}, \mu\right),\\
		& \frac{\mathrm{d}\ln\left(R_{2}(t)\right)}{\mathrm{d}t}=-\left(\mathcal{G}\frac{\delta E_{2}}{\delta \phi}, \mu\right), \\
		& \xi_{1} = \frac{R_{1}(t)}{\exp\left(E_{1}(\Phi)\right)}, \, \xi_{2} = \frac{R_{2}(t)}{\exp\left(E_{2}(\Phi)\right)}.
	\end{aligned}\right.
\end{equation}
The forms of the third and fourth energy equations of \eqref{eq:model-problem-reformulation-MESAV2} are different from that in \eqref{eq:dissipative-system-ESAV}. 
Then we can construct BDF$k$ numerical schemes for above system. 
And we can choose $\theta_0^{n+1}$ according to Theorem \ref{Th:MESAV1-stability} if we set $0 \leq \gamma \leq \frac{\left(\mathcal{G}\mu\left(B_k(\phi^{n})\right), \mu\left(B_k(\phi^{n})\right)\right)}{\left(\mathcal{G} \mu^{n+1}, \mu^{n+1}\right)}$. 
\end{remark}

 \subsection{The fully decoupled R-ESAV-2 approach for Navier-Stokes equation} 
In this subsection, we consider the fully decoupled R-ESAV-2 approach for the following incompressible Navier-Stokes equation, which is a classic dissipative system
\begin{equation}\label{eq:Navier-Stokes}
\left\{\begin{aligned}
& \frac{\partial \mathbf{u}}{\partial t}-\nu \Delta \mathbf{u}+(\mathbf{u} \cdot \nabla) \mathbf{u}+\nabla p =\mathbf{f} & & \text { in } \Omega \times \mathcal{T} ,\\
& \nabla \cdot \mathbf{u} =0 & & \text { in } \Omega \times \mathcal{T}, \\ 
& \mathbf{u} =\mathbf{0} & & \text { on } \partial \Omega \times \mathcal{T}, 
\end{aligned}\right.
\end{equation}
where $\Omega$ is an open bounded domain in $\mathbb{R}^{d} (d=2,3)$ with a sufficiently smooth boundary $\partial \Omega, \mathcal{T}=(0, T]$, $\mathbf{u}, p$ are the unknown velocity and pressure respectively, $\mathbf{f}$ represents an external body force, $\nu>0$ is the viscosity coefficient and $\mathbf{n}$ is the unit outward normal of the domain $\Omega$. 
The system \eqref{eq:Navier-Stokes} satisfies the following law
\begin{equation}\label{eq:Navier-Stokes-energy-law}
	\frac{\mathrm{d}}{\mathrm{d} t} E(\mathbf{u})=-\nu\|\nabla\mathbf{u}\|^{2} + \left(\mathbf{f}, \mathbf{u}\right),
\end{equation}     
where $E(\mathbf{u})=\frac{1}{2}\|\mathbf{u}\|^{2}$ is the total energy. 

Then we can consider the R-ESAV-2 approach for Navier-Stokes equation \eqref{eq:Navier-Stokes}. 

Introduce an exponential SAV $R(t)=\exp\left(E\left(\mathbf{u}\right)\right)$, then we can rewrite the governing system \eqref{eq:Navier-Stokes} into the equivalent form as follows 
\begin{equation}\label{eq:Navier-Stokes-ESAV-2}
\left\{\begin{aligned}
& \frac{\partial \mathbf{u}}{\partial t}+V(\xi)(\mathbf{u} \cdot \nabla) \mathbf{u}-\nu \Delta \mathbf{u}+\nabla p =\mathbf{f},\\
& \nabla \cdot \mathbf{u} =0, \\ 
& \frac{\mathrm{d} R(t)}{\mathrm{d} t}=-\nu R(t) \|\nabla \mathbf{u}\|^{2} + R(t)(\mathbf{f}, \mathbf{u}), \\
& \xi = \frac{R(t)}{\exp\left(E(\mathbf{u})\right)}.
\end{aligned}\right.
\end{equation}

Next, we construct two BDF$k$ schemes for \eqref{eq:Navier-Stokes-ESAV-2}. 
First one is based on pressure correction approach. 

\textbf{Scheme \uppercase\expandafter{\romannumeral1}: }
Given $\mathbf{u}^{n-k}, \cdots, \mathbf{u}^{n},  p^{n-k}, \cdots, p^{n}, r^{n-k}, \cdots r^{n}$, we solve $\mathbf{u}^{n+1}, p^{n+1}, r^{n+1}$ via four steps as follows:

\textbf{Step 1:} Determine solution $\tilde R^{n+1}$ and compute $\xi^{n+1}$:
\begin{eqnarray}
\label{eq:NS-R-ESAV2-BDFk-1}
 && \frac{\tilde{R}^{n+1}-R^{n}}{\delta t} =-\nu \tilde{R}^{n+1} \|\nabla B_{k}\left(\mathbf{u}^{n}\right)\|^{2} + \tilde{R}^{n+1}(\mathbf{f}^{n+1}, B_{k}\left(\mathbf{u}^{n}\right)), \\
 \label{eq:NS-R-ESAV2-BDFk-2}
 && \xi^{n+1}=\frac{\tilde{R}^{n+1}}{\exp\left(E\left(B_{k}\left(\mathbf{u}^{n}\right)\right)\right)}. 	
\end{eqnarray}

\textbf{Step 2:} Compute an intermediate solution $\tilde{\mathbf{u}}^{n+1}$:
\begin{eqnarray}
\label{eq:NS-R-ESAV2-BDFk-3}
& & \frac{\alpha_{k}\tilde{\mathbf{u}}^{n+1}-A_{k}\left(\mathbf{u}^{n}\right)}{\delta t}+V(\xi^{n+1})(B_{k}\left(\mathbf{u}^{n}\right) \cdot \nabla) B_{k}\left(\mathbf{u}^{n}\right)-\nu \Delta \tilde{\mathbf{u}}^{n+1}+\nabla \hat B_{k}\left(p^{n}\right) =\mathbf{f}^{n+1}, \\
\label{eq:NS-R-ESAV2-BDFk-4}
& & \tilde{\mathbf{u}}^{n+1}|_{\partial \Omega} = 0. 
\end{eqnarray}

\textbf{Step 3:} Solve solution $\left(\mathbf{u}^{n+1}, p^{n+1}\right)$:
\begin{eqnarray}
\label{eq:NS-R-ESAV2-BDFk-5}
& & \frac{\alpha_{k} \mathbf{u}^{n+1}- \alpha_{k} \tilde{\mathbf{u}}^{n+1}}{\delta t}+\nabla \left(p^{n+1} - \hat B_{k}\left(p^{n}\right) \right) =0, \\
\label{eq:NS-R-ESAV2-BDFk-6}
& & \nabla \cdot \mathbf{u}^{n+1} =0, \\ 
\label{eq:NS-R-ESAV2-BDFk-7}
& & \mathbf{u}^{n+1}\cdot \mathbf{n}|_{\partial \Omega} = 0, 
\end{eqnarray}
where operator $\hat B_{1}=B_{1}$ for BDF$1$ scheme and $\hat B_{k}=B_{k-1}$ for BDF$k$ ($k \geq 2$) scheme.

\textbf{Step 4:} Update the SAV $R^{n+1}$ via relaxation factor as follows:
\begin{equation}\label{eq:NS-R-ESAV2-BDFk-8}
	R^{n+1} = \theta_{0}^{n+1} \tilde{R}^{n+1} + (1-\theta_{0}^{n+1})\exp\left(E(\mathbf{u}^{n+1})\right), \quad \theta_{0}^{n+1} \in \mathcal{W},
\end{equation}
where, $\mathcal{W}$ is a set defined as follows: 
\begin{equation}\label{eq:NS-set-condition-ESAV2-BDFk}
	\mathcal{W}=\left\lbrace \theta\in [0,1] \; s.t. \;   \frac{R^{n+1}-\tilde{R}^{n+1}}{\delta t} = -\gamma^{n+1}\nu \tilde{R}^{n+1} \|\nabla \mathbf{u}^{n+1}\|^{2} + \nu \tilde{R}^{n+1}\|\nabla B_{k}\left(\mathbf{u}^{n}\right)\|^{2}\right\rbrace,
\end{equation}
with  $\gamma^{n+1} \geq 0$ to be determined such that $\mathcal{W}$ is not empty.

Inspired by projection method in \cite{wu2022new, huang2021stability}, we can construct the following fully decoupled R-ESAV-2 scheme: 

\textbf{Scheme \uppercase\expandafter{\romannumeral2}: }
Given $\mathbf{u}^{n-k}, \cdots, \mathbf{u}^{n},  p^{n-k}, \cdots, p^{n}, r^{n-k}, \cdots r^{n}$, we determine $\mathbf{u}^{n+1}, p^{n+1}, r^{n+1}$ via two steps as follows:

\textbf{Step 1:} Solve solution $\left(\mathbf{u}^{n+1}, p^{n+1}, \tilde R^{n+1}\right)$:
\begin{eqnarray}
 \label{eq:NS-R-ESAV2-new-BDFk-1}
& & \frac{\alpha_{k}\mathbf{u}^{n+1}-A_{k}\left(\mathbf{u}^{n}\right)}{\delta t}-\nu \Delta\mathbf{u}^{n+1}+V(\xi^{n+1})(B_{k}\left(\mathbf{u}^{n}\right) \cdot \nabla) B_{k}\left(\mathbf{u}^{n}\right)+\nabla B_{k}\left(p^{n}\right) =\mathbf{f}^{n+1}, \\
 \label{eq:NS-R-ESAV2-new-BDFk-2}
& & \frac{\tilde{R}^{n+1}-R^{n}}{\delta t} =-\nu \tilde{R}^{n+1} \|\nabla B_{k}\left(\mathbf{u}^{n}\right)\|^{2} + \tilde{R}^{n+1}(\mathbf{f}^{n+1}, B_{k}\left(\mathbf{u}^{n}\right)), \\
 \label{eq:NS-R-ESAV2-new-BDFk-3}
& & \xi^{n+1}=\frac{\tilde{R}^{n+1}}{\exp\left(E\left(B_{k}\left(\mathbf{u}^{n}\right)\right)\right)}, \\
 \label{eq:NS-R-ESAV2-new-BDFk-4}
& & \left(\nabla p^{n+1}, \nabla q\right)=\left(\mathbf{f}^{n+1} - (\mathbf{u}^{n+1} \cdot \nabla) \mathbf{u}^{n+1}-\nu \nabla\times\nabla\times\mathbf{u}^{n+1}, \nabla q\right) \\
& & \qquad \qquad \qquad  =\left(\mathbf{f}^{n+1} - (\mathbf{u}^{n+1} \cdot \nabla) \mathbf{u}^{n+1}, \nabla q\right)-\nu \int_{\partial \Omega}\left(\left(\nabla\times\mathbf{u}^{n+1}\right)\times \nabla q\right) \cdot \boldsymbol{n} \mathrm{d} s, \notag
\end{eqnarray}
where $\mathbf{n}$ is the outward normal of $\partial \Omega$.

\textbf{Step 2:} Update the SAV $R^{n+1}$ via relaxation factor as follows: 
\begin{equation}
 \label{eq:NS-R-ESAV2-new-BDFk-5}
	R^{n+1} = \theta_{0}^{n+1} \tilde{R}^{n+1} + (1-\theta_{0}^{n+1})\exp\left(E(\mathbf{u}^{n+1})\right), \quad \theta_{0}^{n+1} \in \mathcal{W},
\end{equation}
where, $\mathcal{W}$ is a set defined as follows: 
\begin{equation}
 \label{eq:NS-R-ESAV2-new-BDFk-6}
	\mathcal{W}=\left\lbrace \theta\in [0,1] \; s.t. \;   \frac{R^{n+1}-\tilde{R}^{n+1}}{\delta t} = -\gamma^{n+1} \nu \tilde{R}^{n+1} \|\nabla \mathbf{u}^{n+1}\|^{2} + \nu \tilde{R}^{n+1}\|\nabla B_{k}\left(\mathbf{u}^{n}\right)\|^{2}\right\rbrace,
\end{equation}
and  $\gamma^{n+1} \geq 0$ is to be determined such that  the set $\mathcal{W}$ is not empty. 

\begin{remark}
	In the case of periodic boundary condition, the operators $\nabla , \nabla \cdot$ and $\Delta^{-1}$ can commute with each other by defining them in the Fourier space. 
	In the absence of $\mathbf{f}$, taking the divergence on both sides of first equation of \eqref{eq:Navier-Stokes}, we obtain
	\begin{equation}
	-\Delta p=\nabla \cdot(\mathbf{u} \cdot \nabla \mathbf{u}),
	\end{equation} 
	Then the first equation of \eqref{eq:Navier-Stokes} can be rewritten as 
	\begin{equation}
	\frac{\partial \boldsymbol{u}}{\partial t}-\nu \Delta \boldsymbol{u}-\mathbf{J}(\boldsymbol{u} \cdot \nabla \boldsymbol{u})=\mathbf{0},
	\end{equation}
	where $\mathbf{J}$ is defined by
	\begin{equation}
	\mathbf{J} \mathbf{v}:=\nabla \times \nabla \times \Delta^{-1} \mathbf{v} \quad \forall \mathbf{v} \in \mathbf{L}_{0}^{2}(\Omega).
	\end{equation}
	 Moreover, in addition to \eqref{eq:Navier-Stokes-energy-law}, the system \eqref{eq:Navier-Stokes}  satisfies  energy dissipation law as follows
	\begin{equation}
	\frac{1}{2} \frac{d}{d t}\|\nabla \mathbf{u}\|^{2}=-\nu\|\Delta \mathbf{u}\|^{2}.
	\end{equation}
	Thus, \eqref{eq:NS-R-ESAV2-new-BDFk-1}-\eqref{eq:NS-R-ESAV2-new-BDFk-4} can be replaced by
	\begin{eqnarray}
 \label{eq:NS-R-ESAV2-new-BDFk-periodic-1}
& & \frac{\alpha_{k}\mathbf{u}^{n+1}-A_{k}\left(\mathbf{u}^{n}\right)}{\delta t}-\nu \Delta\mathbf{u}^{n+1}-V(\xi^{n+1})\mathbf{J}\left(B_{k}\left(\mathbf{u}^{n}\right) \cdot \nabla B_{k}\left(\mathbf{u}^{n}\right)\right)=\mathbf{0}, \\
 \label{eq:NS-R-ESAV2-new-BDFk-periodic-2}
& & \frac{\tilde{R}^{n+1}-R^{n}}{\delta t} =-\nu \tilde{R}^{n+1} \|\Delta B_{k}\left(\mathbf{u}^{n}\right)\|^{2}, \\
 \label{eq:NS-R-ESAV2-new-BDFk-periodic-3}
& & \xi^{n+1}=\frac{\tilde{R}^{n+1}}{\exp\left(E\left(B_{k}\left(\mathbf{u}^{n}\right)\right)\right)}, \\
 \label{eq:NS-R-ESAV2-new-BDFk-periodic-4}
& & \Delta p^{n+1}=-\nabla \cdot\left(\mathbf{u}^{n+1} \cdot \nabla \mathbf{u}^{n+1}\right).
\end{eqnarray}
\end{remark}

 Setting $K(\mathbf{u})=\nu\|\nabla \mathbf{u}\|^{2}$ or $K(\mathbf{u})=\nu\|\Delta \mathbf{u}\|^{2}$,  we can choose $\theta_{0}^{n+1}$ and $\gamma^{n+1}$ in  Scheme \uppercase\expandafter{\romannumeral1} and Scheme \uppercase\expandafter{\romannumeral2} accordding to Theorem \ref{Th:ESAV2-stability}. 
 Similarly, above schemes satiesfy energy dissipation law \eqref{eq:ESAV2-stability} and \eqref{eq:ESAV2-stability-1}.

%
%
%

 \section{Numerical simulations}
 \label{sec:numerical-simulations}
In this section, we demonstrate ample numerical results to verify that the constructed  R-ESAV-1 and R-ESAV-2 approaches are accurate and 
efficient. Besides, we also give detailed comparisons between the original ESAV schemes with the constructed R-ESAV schemes. We consider the numerical examples with periodic boundary condition and use the Fourier spectral method for spatial discretization in what follows unless explicitly given, and the dissipation rate parameter $\gamma$ is set to $1$ for the R-ESAV-1 schemes by default.

\textbf{Example 1.} We first consider the Allen-Cahn equation 
\begin{equation}\label{eq:Allen-Cahn}
\frac{\partial \phi}{\partial t}=\sigma_0 \Delta \phi+\left(1-\phi^{2}\right) \phi.
\end{equation}

(\romannumeral1) We give an exact solution
\begin{equation} \label{eq:AC-CH-exact-solution-example}
\phi(x, y, t)=\exp (\sin (\pi x) \sin (\pi y)) \sin (t),
\end{equation}
and $f$ is the external force satiesfying \eqref{eq:Allen-Cahn}. 
We set computational domain $\Omega=[0, 2]^2$ and the model parameter $\sigma_0=0.01^2$. For spatial discretization, we use Fourier mode $N^2=64^2$, so that compared with the time discretization error, the spatial discretization error is negligible.

The convergence rates of the $L^2$ error at $T = 1$ obtained by various schemes are presented in Fig.\,\ref{Fig:AC-order-test}, where we can observe that 

(a) Numerical results are all consistent with the expected convergence rates; 

(b) The errors of of  R-ESAV-1 (resp. R-ESAV-2) schemes for BDF$1$ scheme are  obviously smaller than that of ESAV-1 (resp. ESAV-2) schemes; 

(c) The improvement in the accuracy for the ESAV schemes with relaxation for higher-order schemes is not as notable as for first-order scheme. 

We also demonstrate  the evolution of relaxation factor $\theta_0^{n+1}$ obtained by  R-ESAV-1/BDF$2$ and R-ESAV-2/BDF$2$ scheme with time step $\delta t=1e-3$ in Fig.\,\ref{Fig:AC-exact-solution-zeta}. It can be obviously show that $\theta_0^{n+1}$ always takes the value zeros except at an initial time interval for R-ESAV-2/BDF$2$ scheme.


\begin{figure}[htbp]
	\centering
	\includegraphics[width=5.3cm]{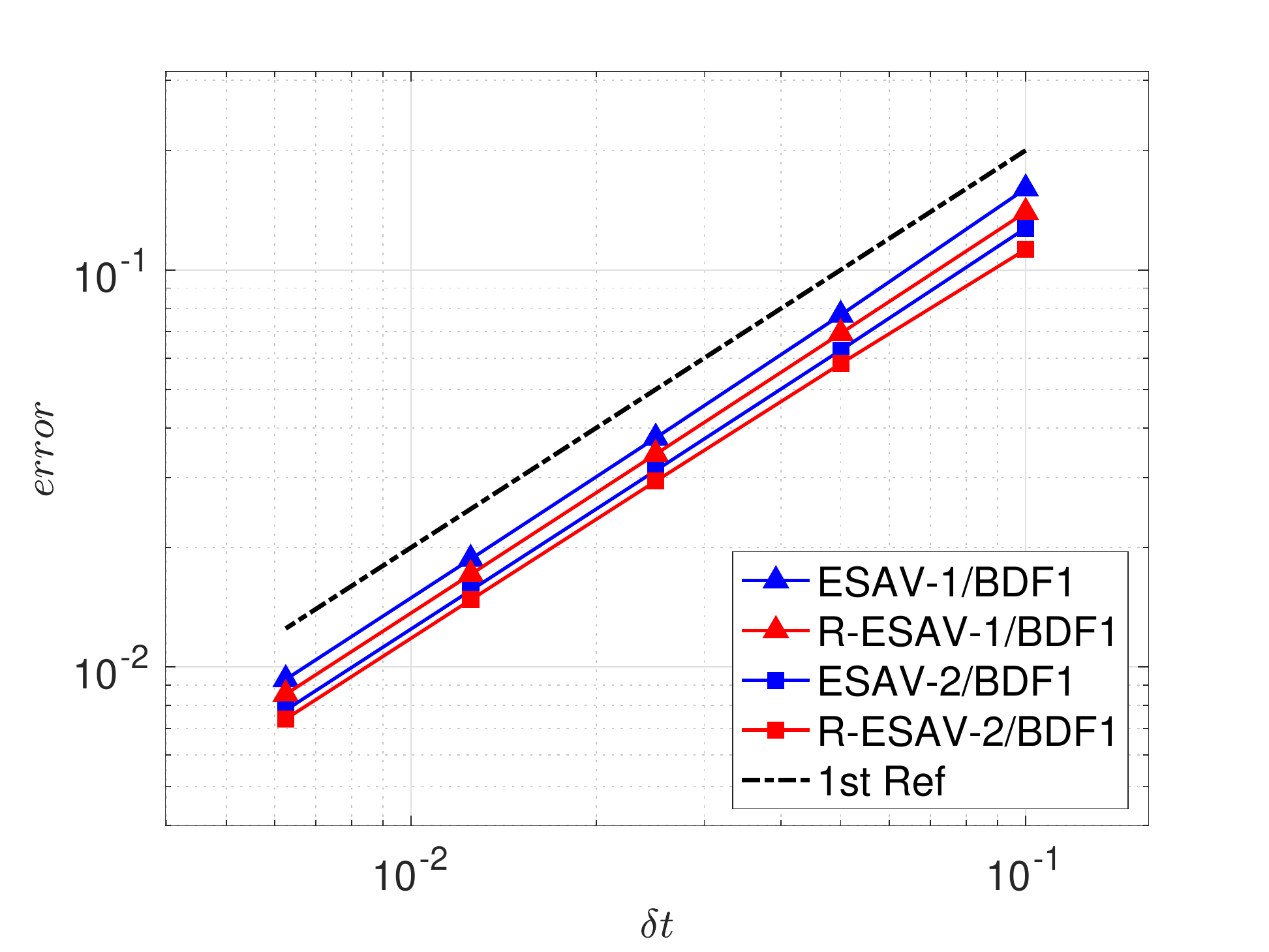}\hspace{-6mm}
	\includegraphics[width=5.3cm]{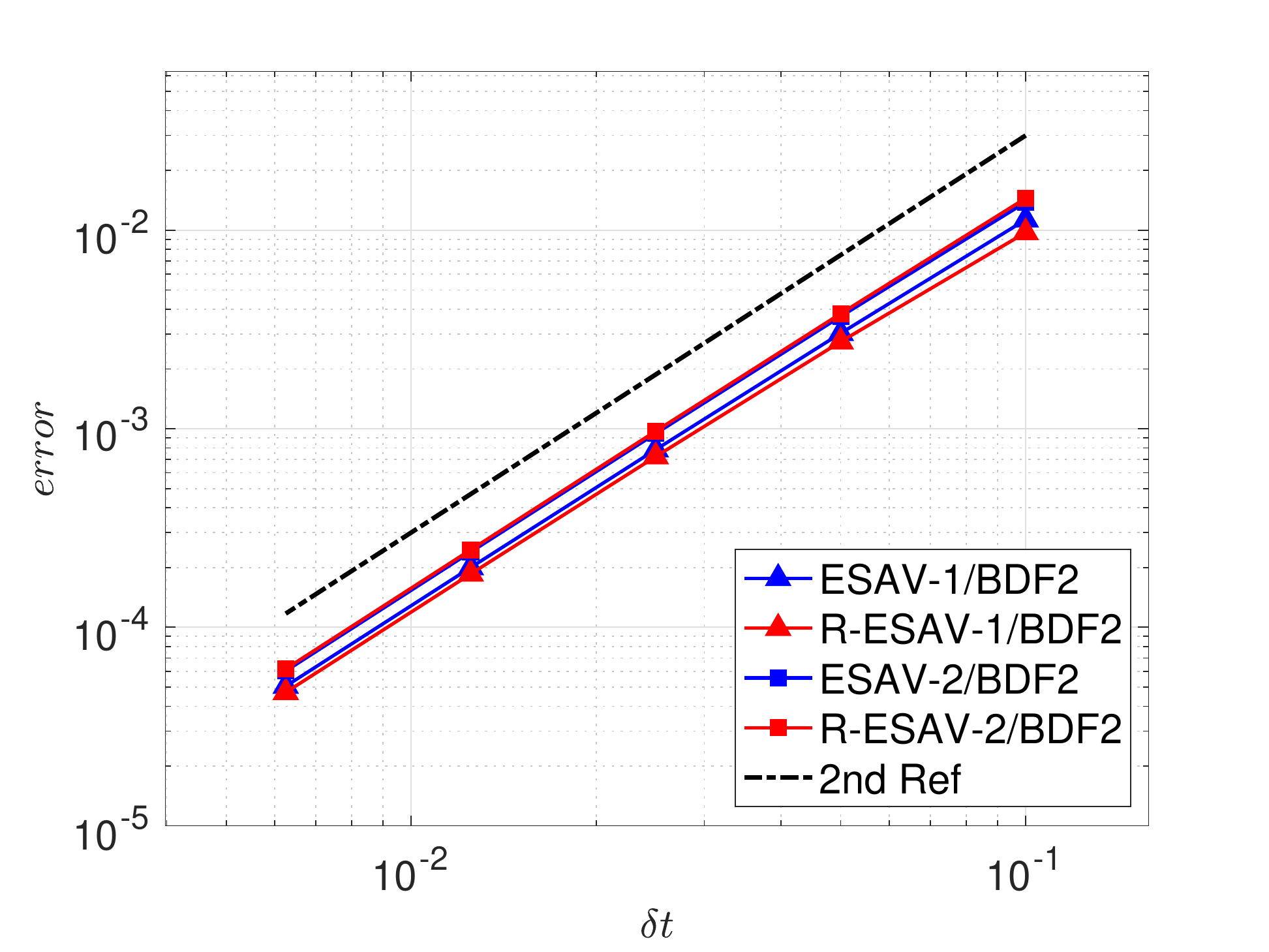}\hspace{-6mm}
	\includegraphics[width=5.3cm]{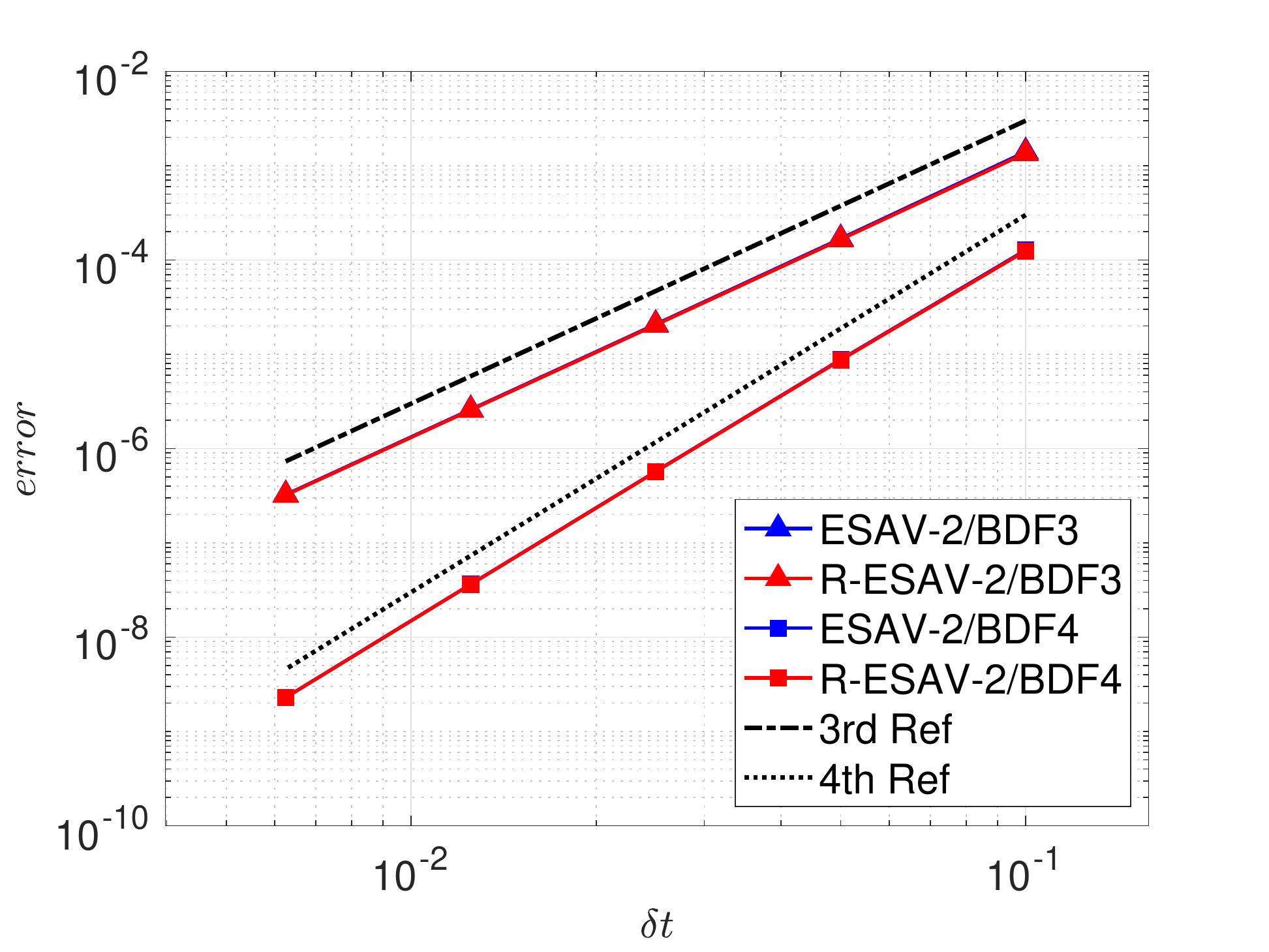}
\caption{Example 1 (\romannumeral1). Convergence test for Allen-Cahn equation obtained by ESAV-1/BDF$k$, R-ESAV-1/BDF$k$ ($k=1, 2$), ESAV-2/BDF$k$ and R-ESAV-2/BDF$k$ ($k=1, 2, 3, 4$) schemes.}
	\label{Fig:AC-order-test}
\end{figure} 

\begin{figure}[htbp]
	\centering
	\includegraphics[width=5.3cm]{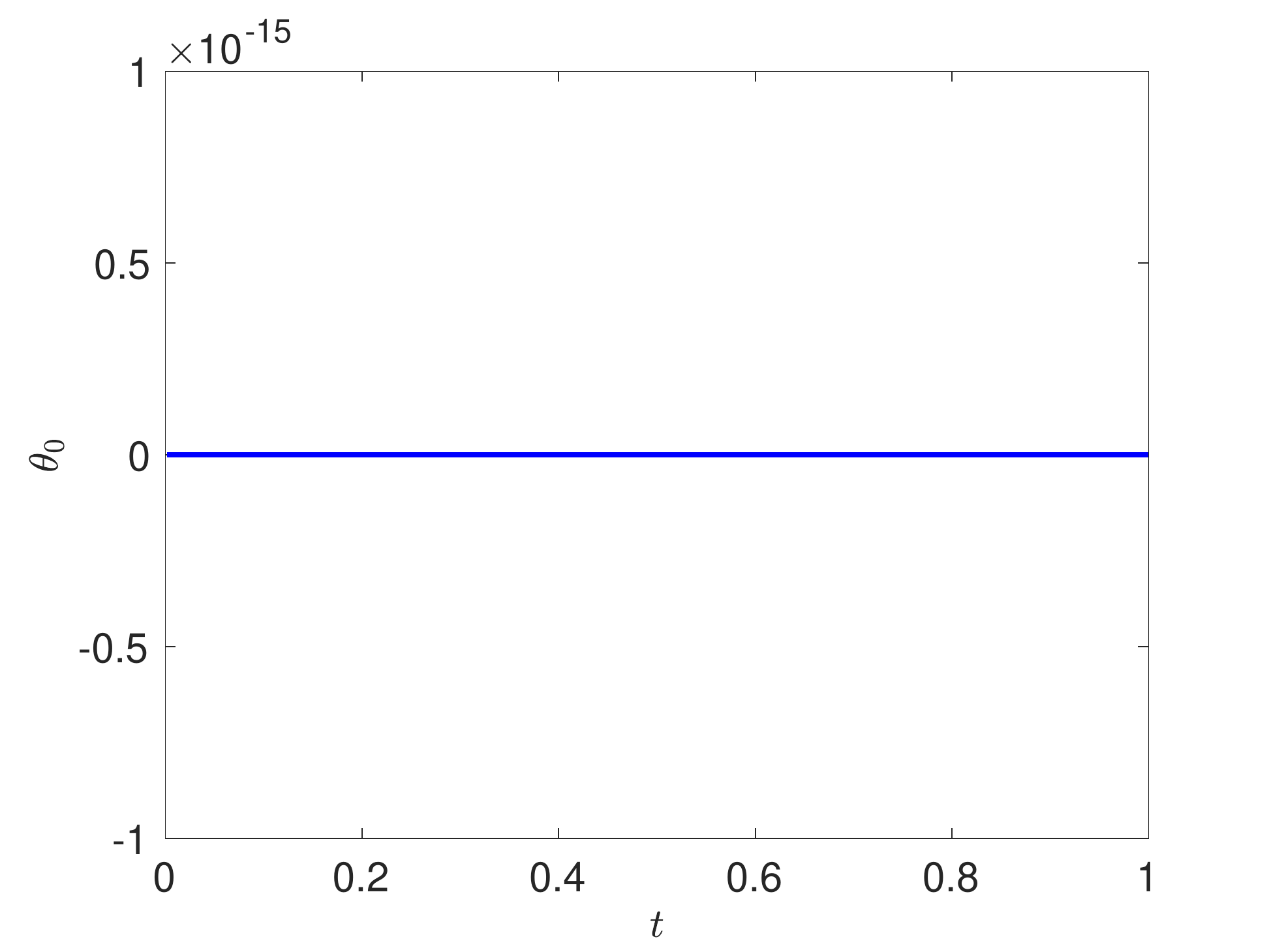}
	\includegraphics[width=5.3cm]{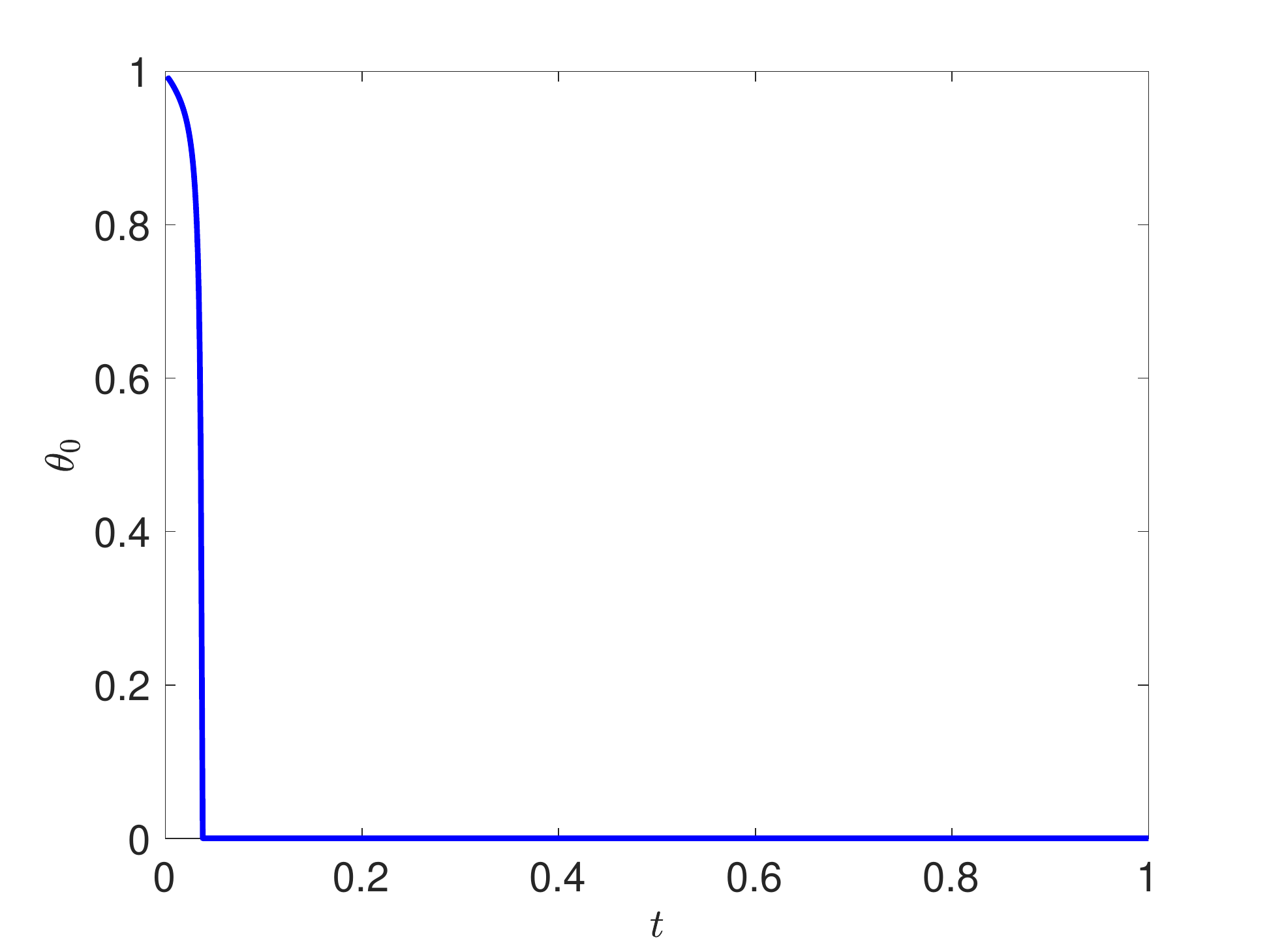}\hspace{-6mm}
	\caption{Example 1 (\romannumeral1). The evolution of relaxation factor $\theta_{0}^{n+1}$ with $\delta t=1e-3$. Left: R-ESAV-1/BDF$2$ scheme; right: R-ESAV-2/BDF$2$ scheme.}
	\label{Fig:AC-exact-solution-zeta}
\end{figure}


(\romannumeral2)  We choose the initial condition as            
\begin{equation}\label{eq:AC-CH-initial-condition-star-shape}
\begin{aligned}
	& \phi(x, y)=\tanh \frac{1.5+1.2 \cos (6 \lambda)-2 \pi \rho}{\sqrt{2\alpha}}, \\ 
	& \lambda=\arctan \frac{y-0.5}{x-0.5}, \quad \rho=\sqrt{\left(x-\frac{1}{2}\right)^{2}+\left(y-\frac{1}{2}\right)^{2}},
\end{aligned}
\end{equation}
where $(\lambda, \rho)$ are the polar coordinates of $(x, y)$. 
We set computational domain as $\Omega=[0, 1]^{2}$, the other parameters are $\sigma_0=0.01^2$, and Fourier modes are $N^2=128^2$. 
The computational solution of the semi-implicit/BDF2 scheme obtained by time step $ \delta t = 1e-5$  is regarded as the reference solution. 
It represents the $L^{2}$-norm error of four numerical schemes  we constructed above at $T=200$ with various time steps in Table \ref{table:comparison-dt-schemes}. 
It can be observed that, compared with ESAV-1 (resp. ESAV-2) schemes, R-ESAV-1 (resp. R-ESAV-2) schemes can noticeably reduce the error of the solution. 
The error of solution is large when the time step of ESAV-2 scheme is not sufficiently small, while R-ESAV-2 scheme can  improve accuracy obviously. 
It also shows a comparison of energy (first), errors of energy (second) and the evolution of errors of energy at different time steps (third) for the constructed schemes in Fig.\,\ref{Fig:AC-star-shape-energy}. 
Moreover the evolution of error of $\xi^{n+1}$ is presented in Fig.\,\ref{Fig:AC-star-shape-xi},  which indicates that the R-ESAV-1 (resp. R-ESAV-2) scheme can improve the accuracy compared with ESAV-1 (resp. ESAV-2) scheme and the error of  $\xi^{n+1}$ for R-ESAV-2 scheme will reach the machine accuracy after the simulation reaching the steady state. 

\linespread{1.2}
\begin{table}[htbp] 
	\centering
	\caption{Example 1 (\romannumeral2). A comparison of  $L^2$-error obtained by four approaches based on BDF$2$ scheme for Allen-Cahn equation at $T=200$ with various time steps.}
	 \label{table:comparison-dt-schemes}
	\begin{tabular}{||c|c|c|c|c||}
		\hline
		 & ESAV-1 & R-ESAV-1 & ESAV-2 & R-ESAV-2 \\
		\hline
	1E-1  & 9.49E-04 &  2.76E-04  & 0.98       & 1.01E-04  \\
	5E-2  & 8.43E-04 &  7.22E-05  & 1.07E-03   & 2.74E-05  \\
	1E-2  & 1.18E-04 &  2.94E-06  & 4.55E-05   & 1.01E-06  \\
	5E-3  & 3.53E-05 &  7.36E-07  & 2.29E-05   & 2.49E-07  \\
	1E-3  & 1.63E-06 &  2.96E-08  & 1.47E-06   & 9.88E-09  \\
	\hline
	\end{tabular}
\end{table}

\begin{figure}[htbp]
	\centering
	\includegraphics[width=5.3cm]{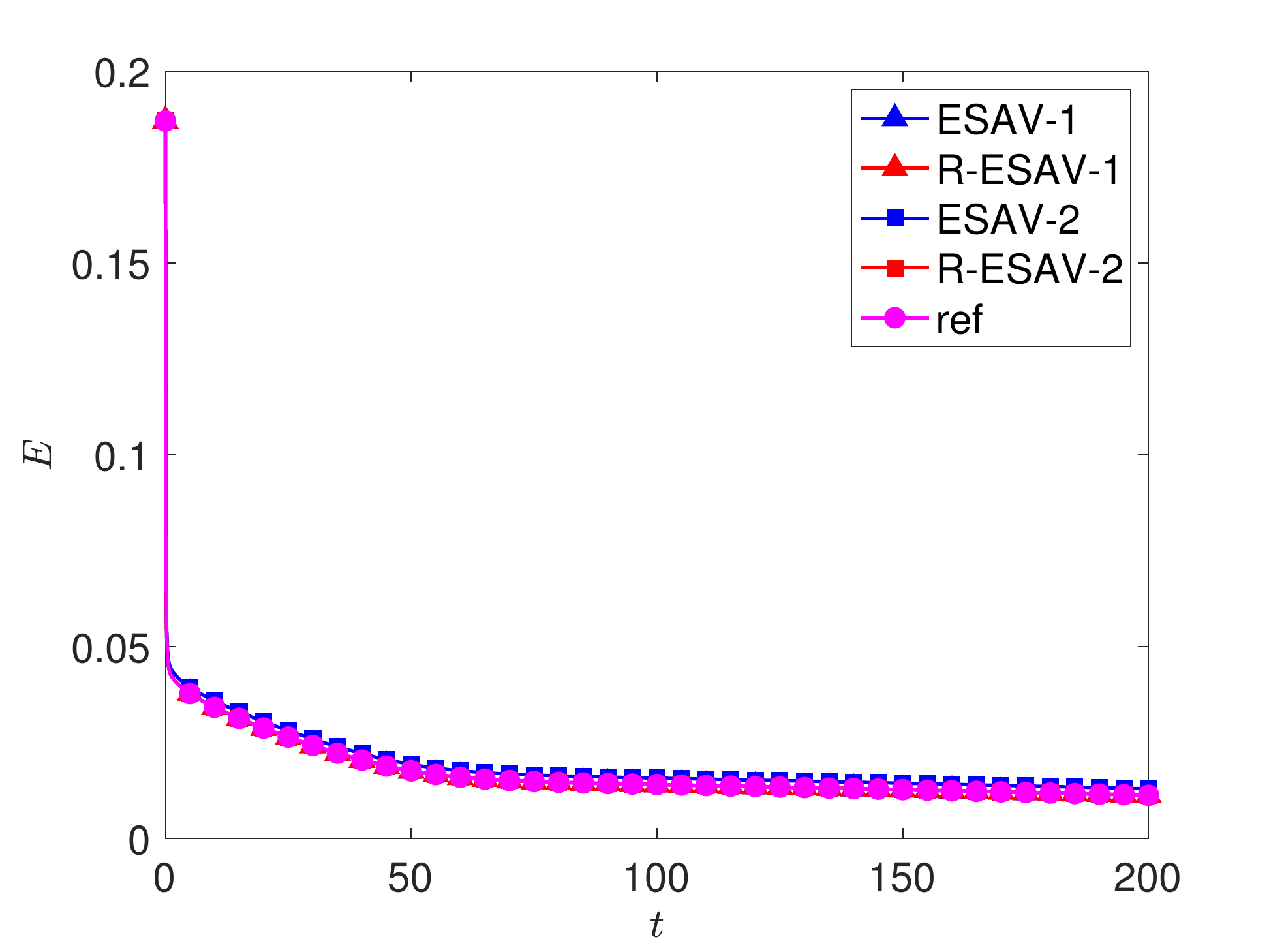}\hspace{-6mm} 
	\includegraphics[width=5.3cm]{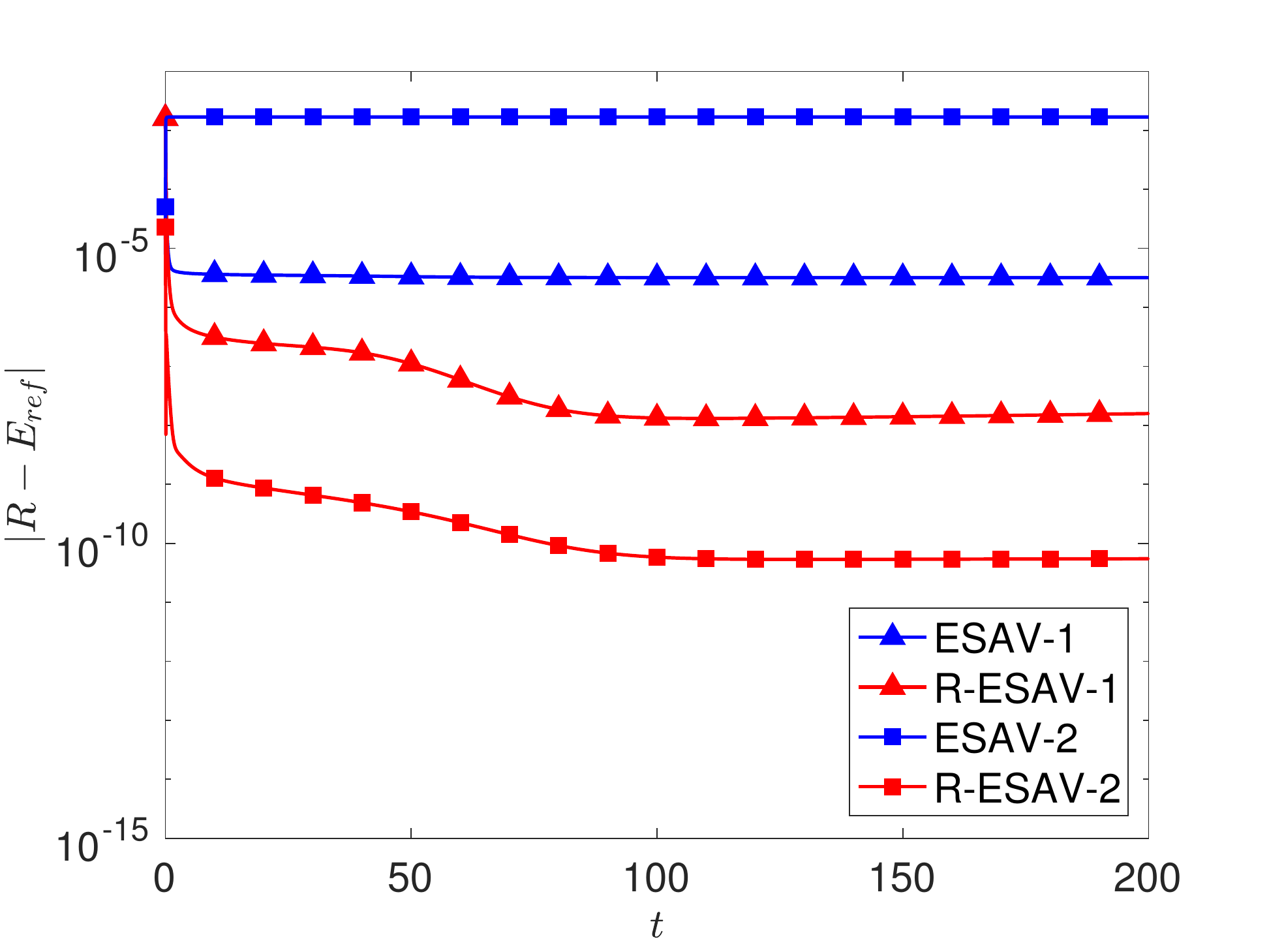} 
	\hspace{-6mm}
	\includegraphics[width=5.3cm]{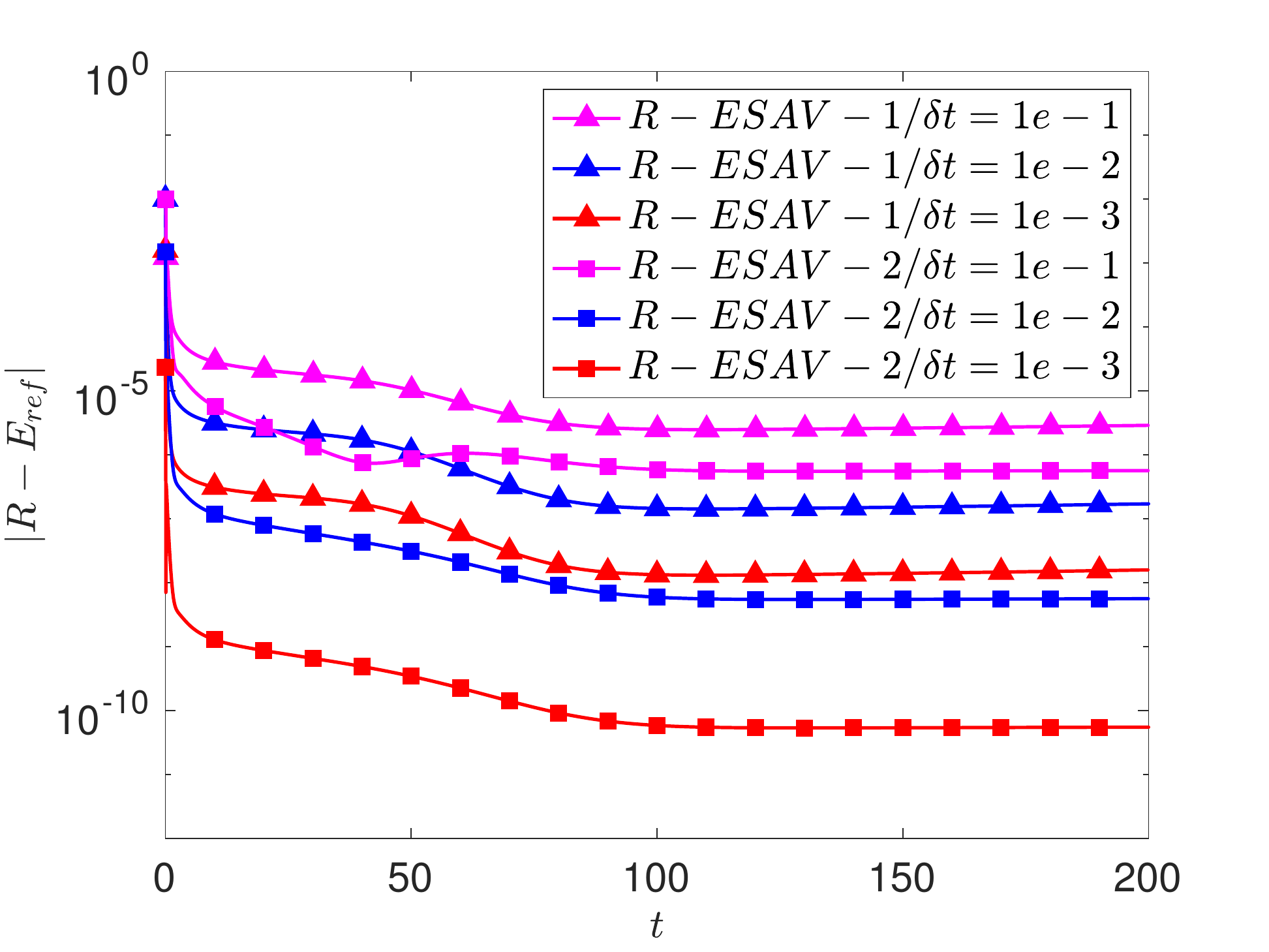}
	\hspace{-1cm}
	\caption{Example 1 (\romannumeral2). Allen-Cahn equation: a comparison of energy (first) and errors of energy (second) of four approaches based on BDF$2$ scheme; and a comparison of errors of energy of R-ESAV-1/BDF$2$  and R-ESAV-2/BDF$2$ schemes with various time steps (third).}
		\label{Fig:AC-star-shape-energy}
\end{figure} 

\begin{figure}[htbp]
	\centering
	\includegraphics[width=5.3cm]{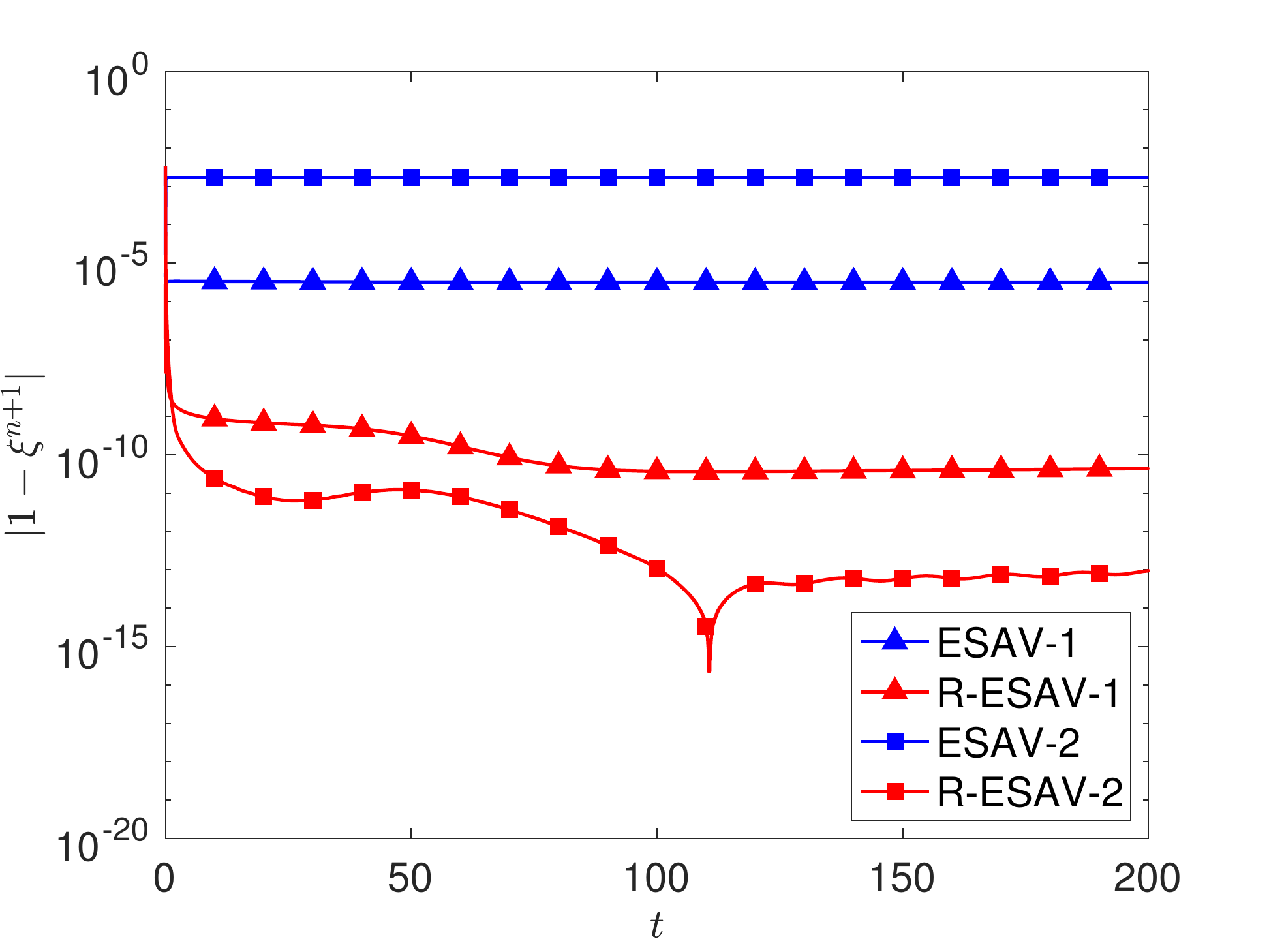}
	\caption{Example 1 (\romannumeral2). A comparison of the evolution of errors of $\xi^{n+1}$ obtained by four approaches based on BDF$2$ scheme for Allen-Cahn equation.}
		\label{Fig:AC-star-shape-xi}
\end{figure}

\textbf{Example 2.} The Cahn-Hilliard equation
\begin{equation}
\frac{\partial \phi}{\partial t}=-M \Delta\left(\sigma_0 \Delta \phi+\frac{1}{\epsilon^2}\left(1-\phi^{2}\right) \phi\right).
\end{equation}

(\romannumeral1)
We also choose \eqref{eq:AC-CH-exact-solution-example} as the exact solution, and
set model parameter to be $\sigma_0=0.04, M=0.005, \epsilon=1$. 
Fig.\,\ref{Fig:CH-order-test-three-method} shows the convergence rates of different schemes. 
We can observe similar results as those of the Allen-Cahn equation. 

\begin{figure}[htbp]
	\centering
	\includegraphics[width=5.3cm]{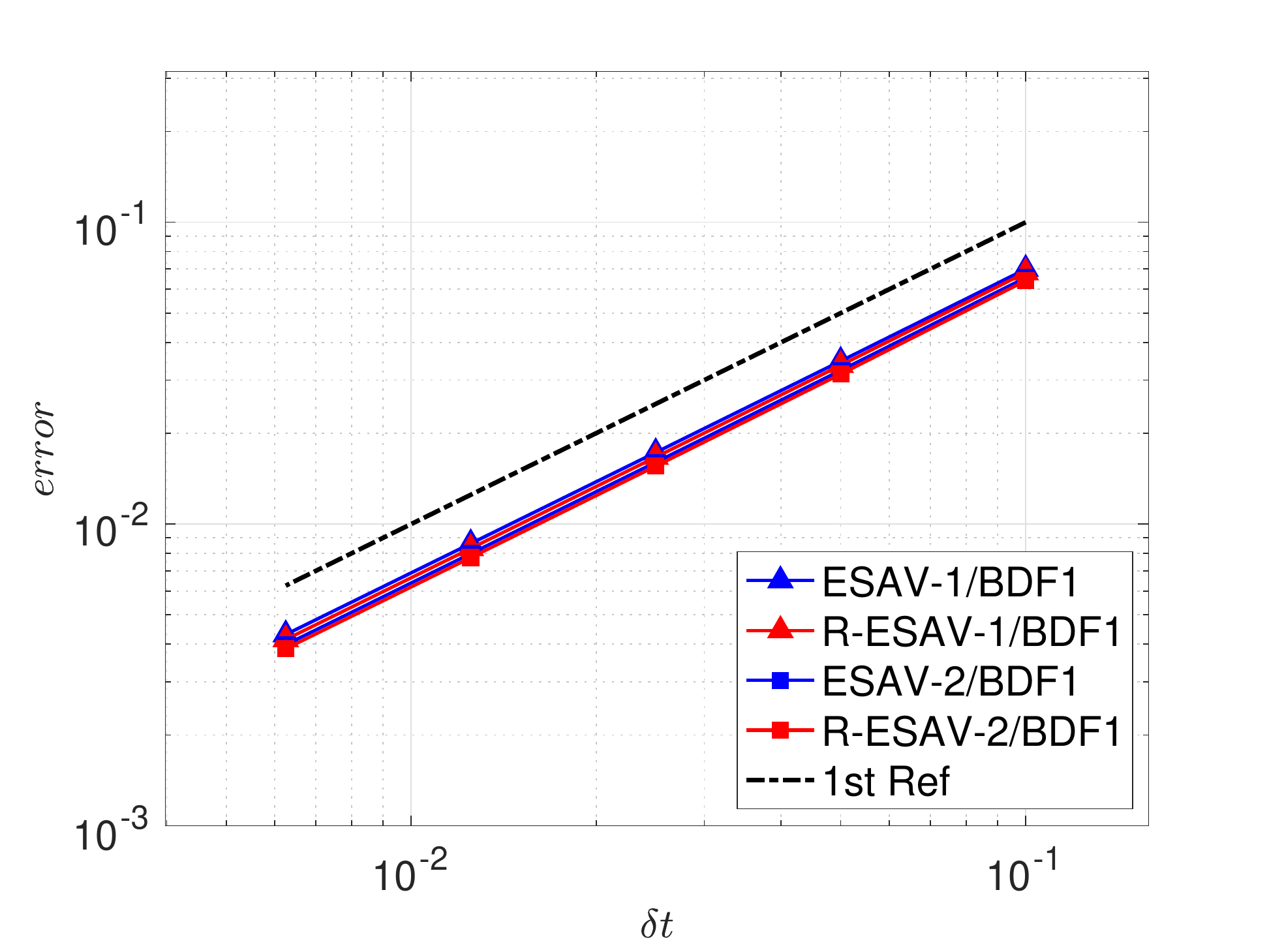}\hspace{-6mm}
	\includegraphics[width=5.3cm]{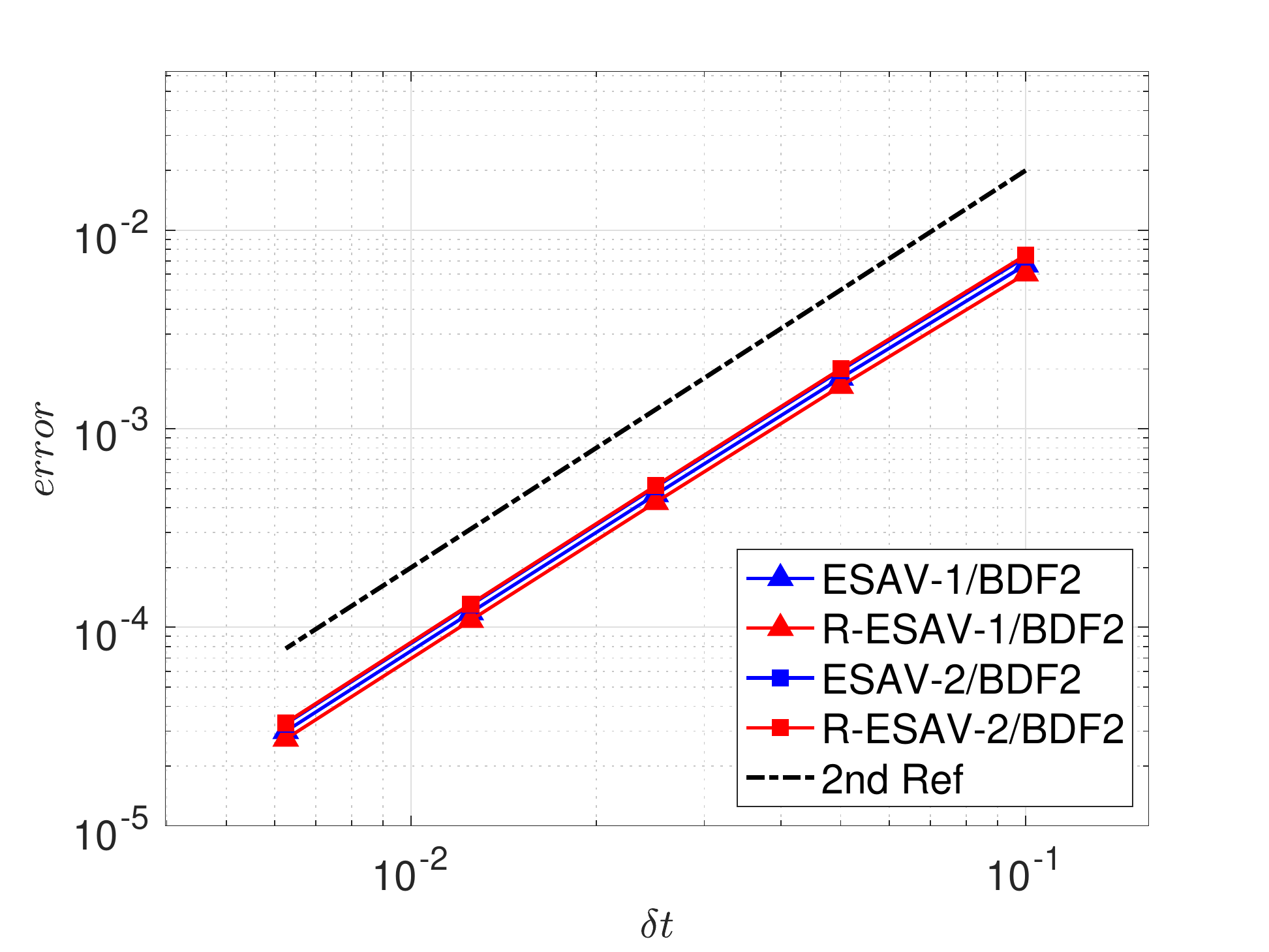}\hspace{-6mm}
	\includegraphics[width=5.3cm]{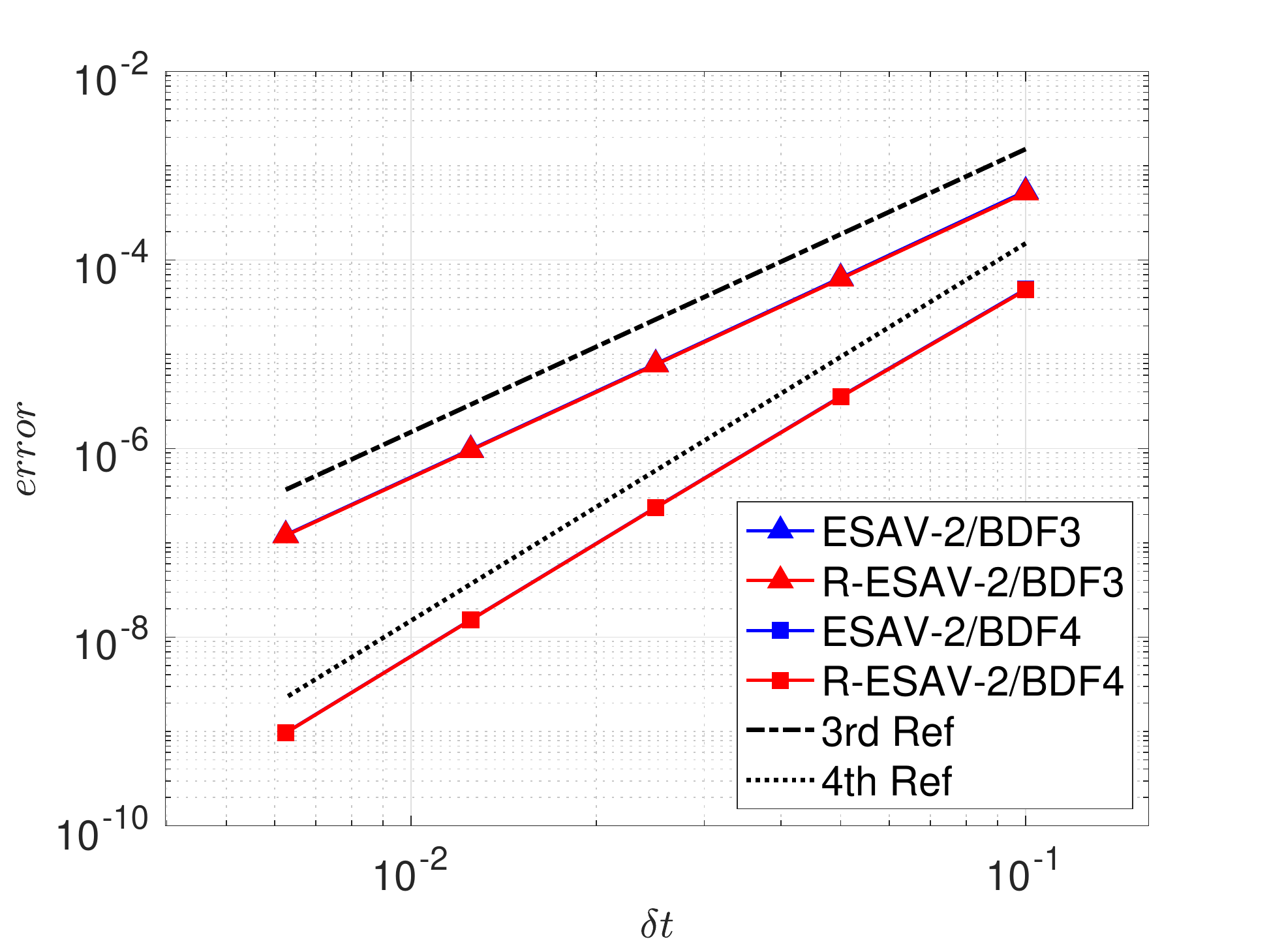}
	\caption{Example 2 (\romannumeral1). Convergence test for Cahn-Hilliard equation obtained by ESAV-1/BDF$k$, R-ESAV-1/BDF$k$ ($k=1, 2$), ESAV-2/BDF$k$ and R-ESAV-2/BDF$k$ ($k=1, 2, 3, 4$) schemes.}
	\label{Fig:CH-order-test-three-method}
\end{figure} 

(\romannumeral2)  We consider a rectangular array of $9 \times  9$ circles as  the initial condition
\begin{equation}
\phi_{0}(\boldsymbol{x}, t)=80-\sum_{m=1}^{9} \sum_{n=1}^{9} \tanh\left(\frac{\sqrt{\left(x-x_{m}\right)^{2}+\left(y-y_{n}\right)^{2}}-r_{0}}{\sqrt{2} \epsilon}\right),
\end{equation}
where $r_0 = 0.085, x_{m} = 0.2\times m, y_{n} = 0.2\times n$ for $m, n = 1, 2, \cdots, 9$. 
We set computational domain as $[0, 2]^2$, the other parameters are $M=1e-6, \sigma_0=1, \epsilon=0.01$ and Fourier modes are $N^2=512^2$  in the simulations. 
The evolutions of an array of circles governed by Cahn-Hilliard equation using the R-ESAV-2/BDF$2$ scheme with $\delta t =1e-3$ are shown in Fig.\,\ref{Fig:CH-circles-RESAV2}.

\begin{figure}[htbp]
\centering
\subfigure[profiles of $\phi=0$ at $T=0, 5, 10$]{
	\includegraphics[width=5.3cm]{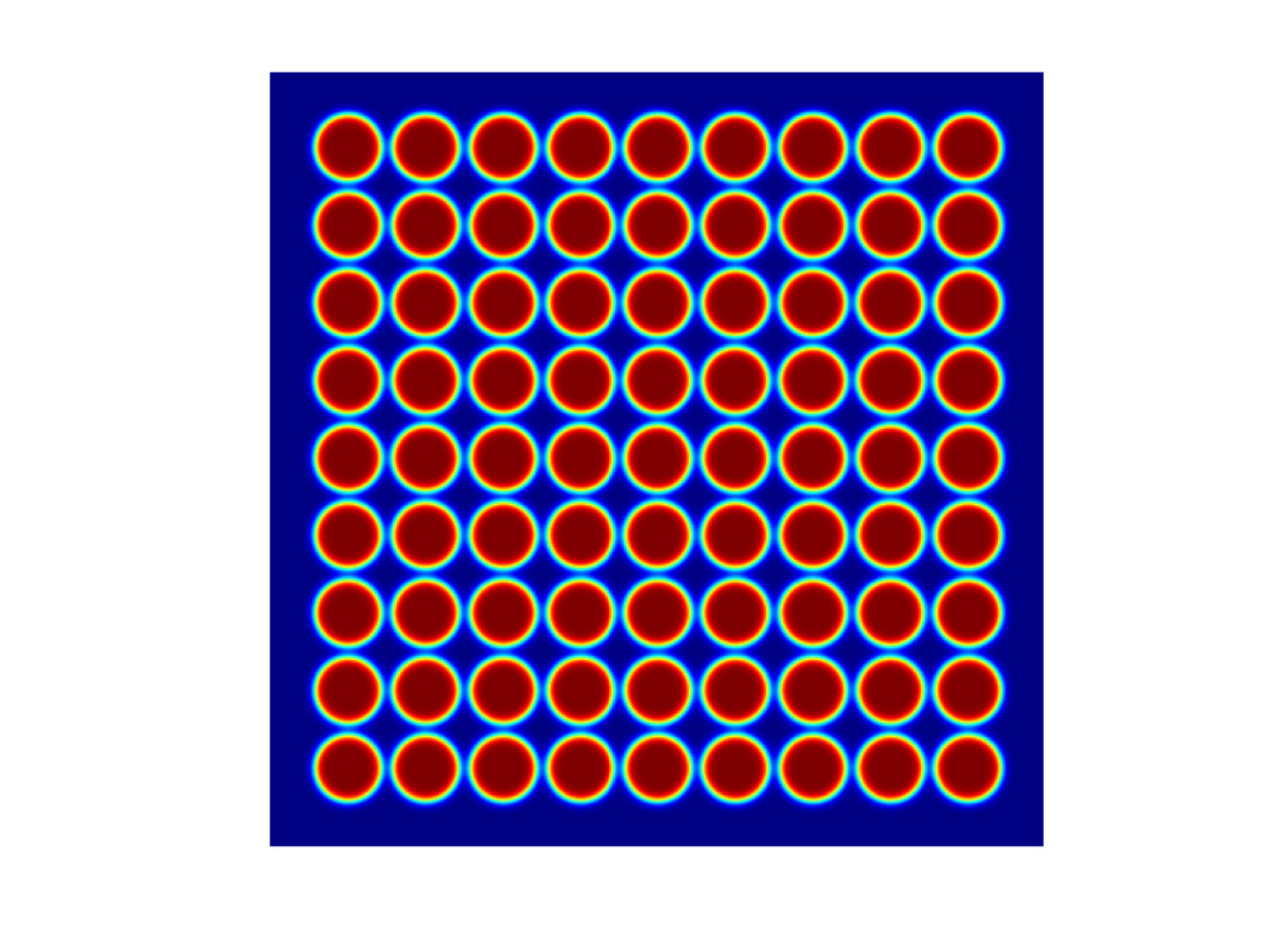}\hspace{-9mm}
	\includegraphics[width=5.3cm]{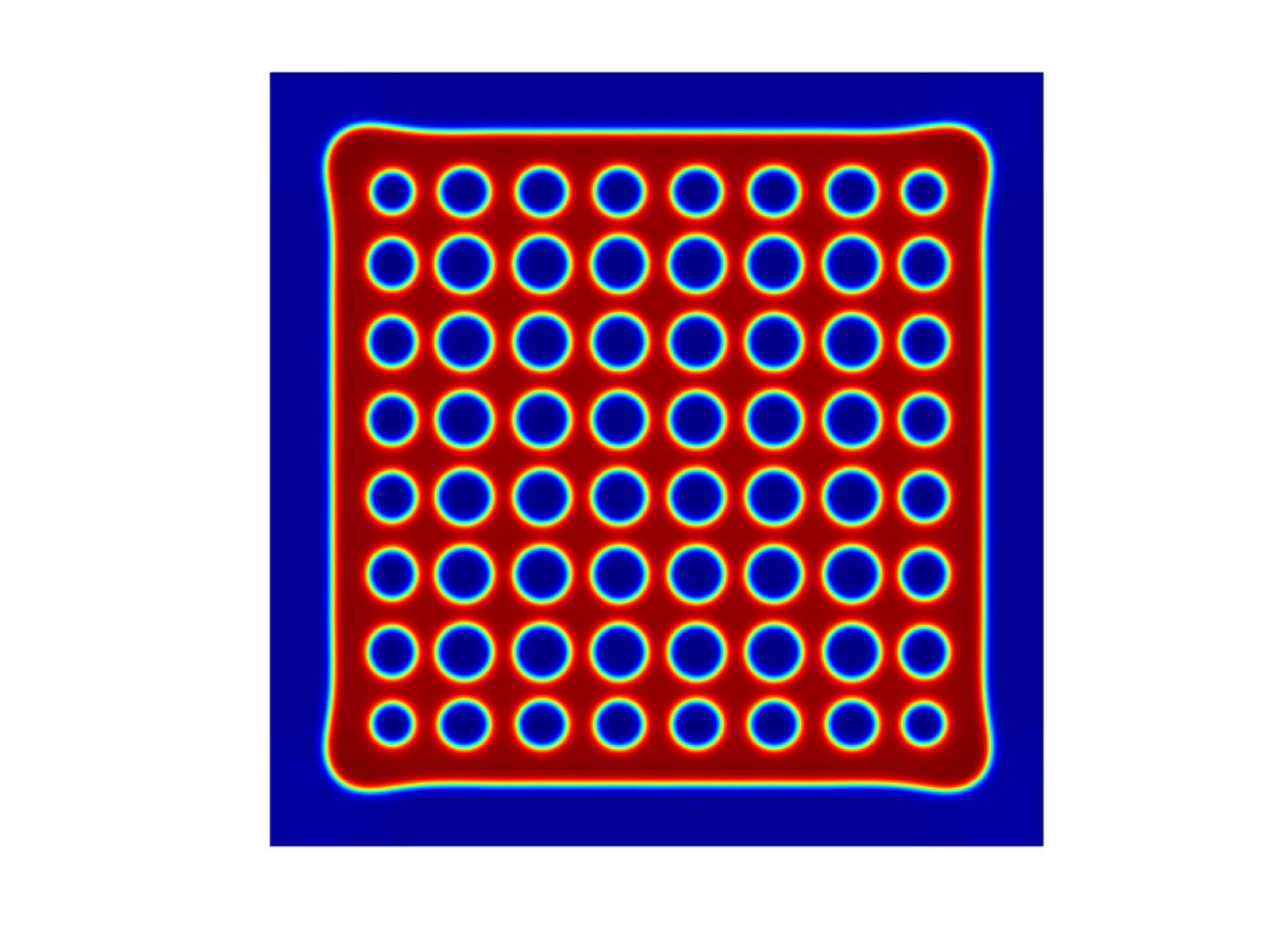}\hspace{-9mm}
	\includegraphics[width=5.3cm]{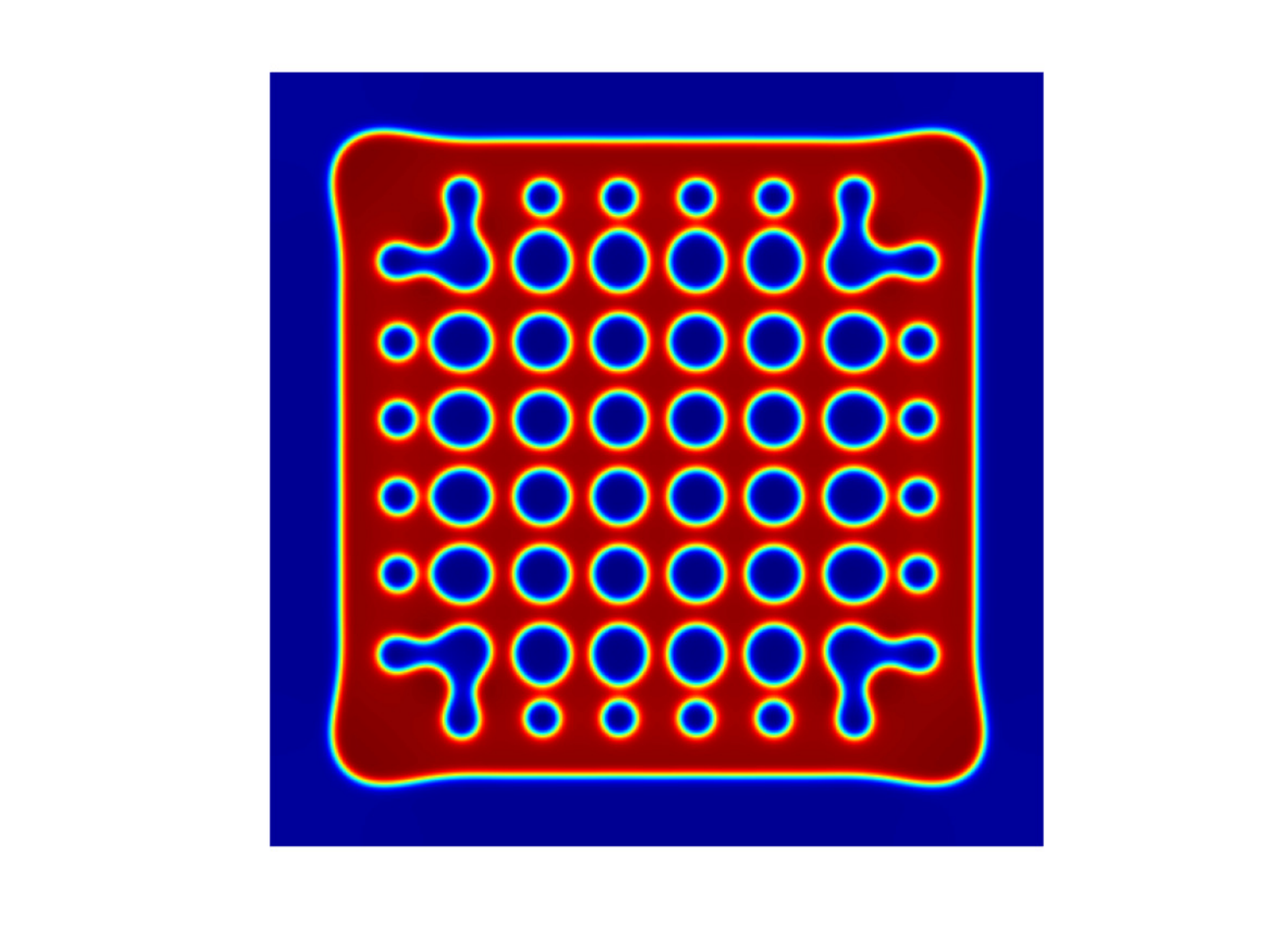}
} 
\subfigure[profiles of $\phi=0$ at $T=20, 30, 50$]{
	\includegraphics[width=5.3cm]{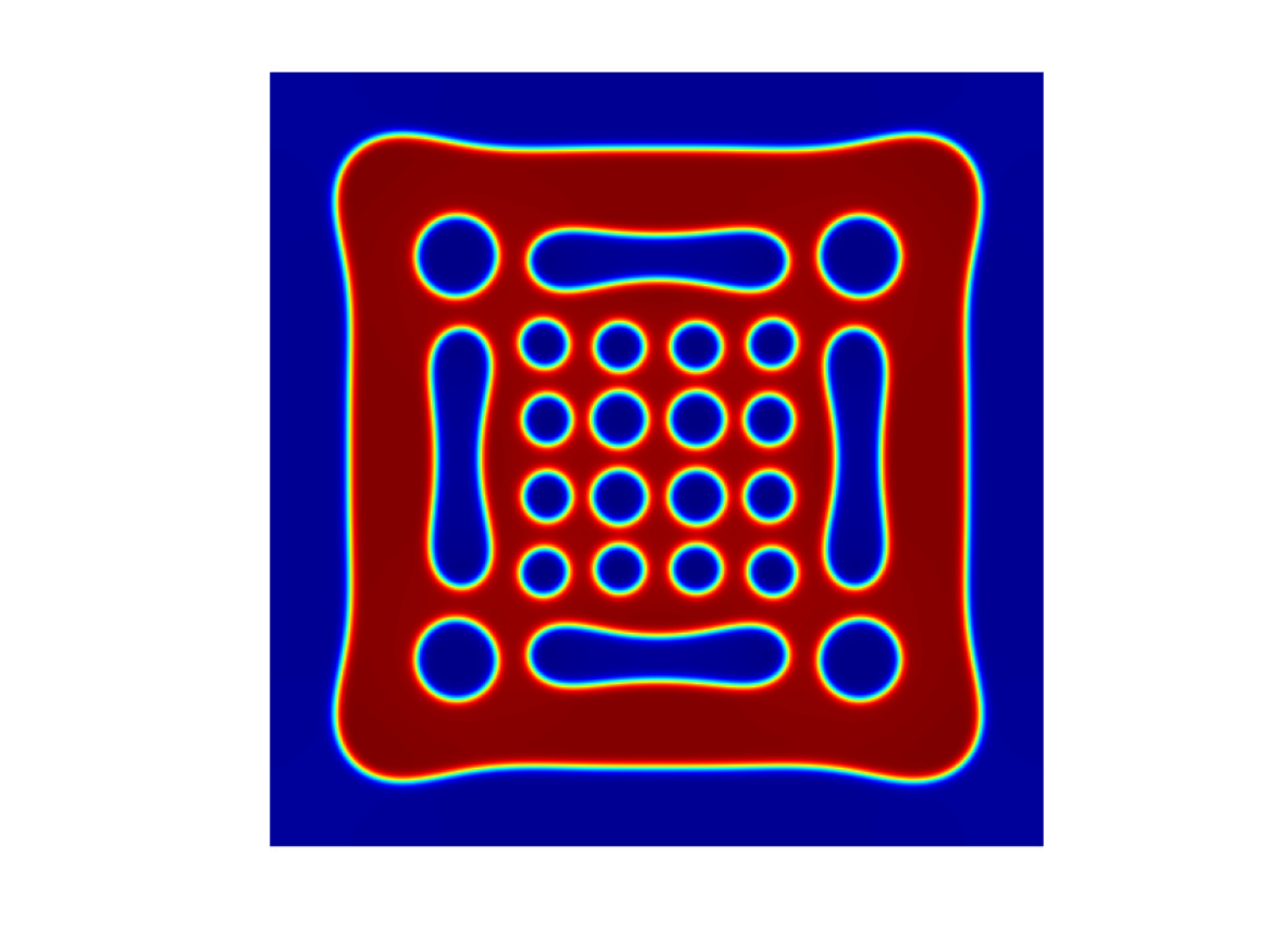}\hspace{-9mm}
	\includegraphics[width=5.3cm]{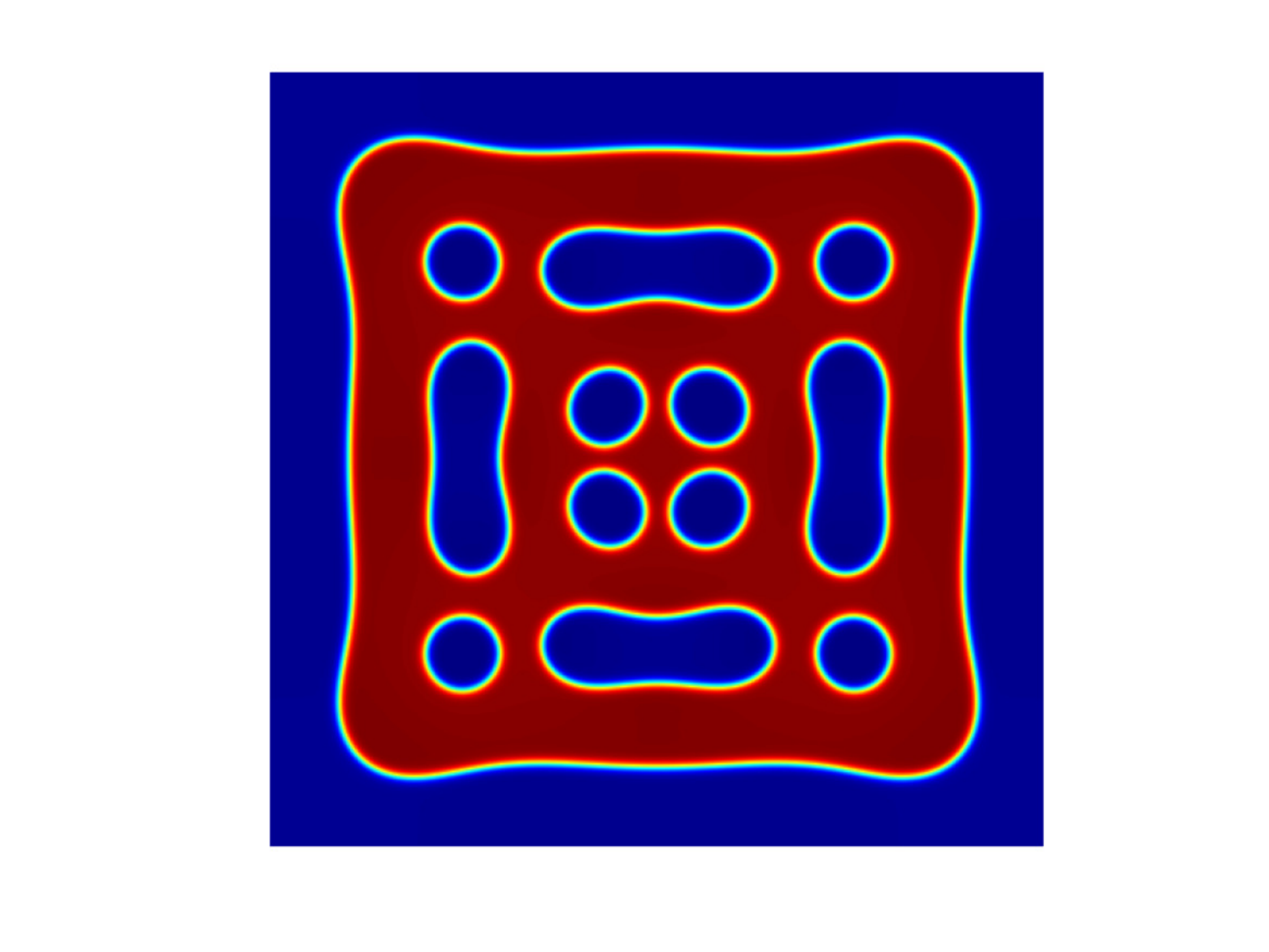}\hspace{-9mm}
	\includegraphics[width=5.3cm]{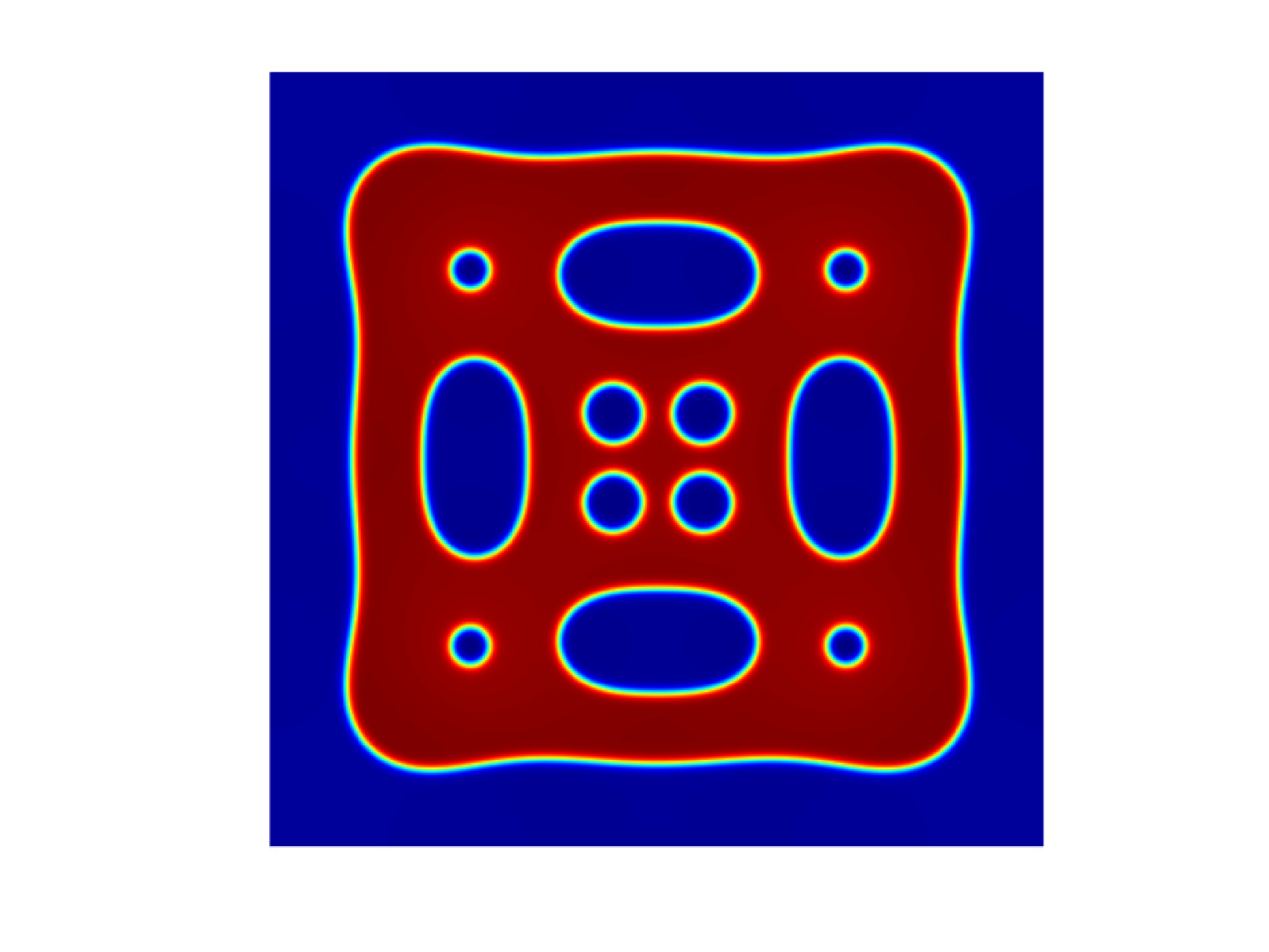}
}
\label{Fig:CH-circles-RESAV2}
\caption{Example 2 (\romannumeral2). The dynamic evolution of an array of circles governed by Cahn-Hilliard equation obtained by R-ESAV-2/BDF$2$ scheme.}
\end{figure}

\textbf{Example 3.} In this example, we consider the phase-field crystal model 
\begin{equation}
\left\{
\begin{array}{l}\frac{\partial \phi}{\partial t}=M \Delta \mu,  \quad \boldsymbol{x} \in \Omega, t>0, \\ 
\mu=(\Delta+\zeta)^{2} \phi+\phi^{3}-\epsilon \phi, \quad \boldsymbol{x} \in \Omega, t>0, \\ 
\phi(\boldsymbol{x}, 0)=\phi_{0}(\boldsymbol{x}),
\end{array}\right.
\end{equation} 
which is a $H^{-1}$-gradient flow associated with the total free energy as follows
\begin{equation}
	E(\phi)=\int_{\Omega}\left(\frac{1}{2} \phi(\Delta+\zeta)^{2} \phi+\frac{1}{4} \phi^{4}-\frac{\epsilon}{2} \phi^{2}\right) \mathrm{d} \boldsymbol{x},
\end{equation}
where $M>0$ is the mobility coefficient. 
We set $M=1, \zeta=1$ in the following simulations. 

(\romannumeral1)  We first simulate the crystal growth in a super-cooled liquid in $2D$. 
We adopt the following initial condition
\begin{equation}
\phi\left(x_{j}, y_{j}, 0\right)=\bar{\phi}+\alpha_{1}\left(\cos \left(\frac{\alpha_{2}}{\sqrt{3}} y_{j}\right) \cos \left(\alpha_{2} x_{j}\right)-0.5 \cos \left(\frac{2 \alpha_{2}}{\sqrt{3}} y_{j}\right)\right), \quad j=1,2, \cdots 5,
\end{equation}
where $x_j$ and $y_j$ define a local system of Cartesian coordinates, which is oriented with the crystallite lattice. 
We choose the constant parameters as $\bar{\phi} = 0.285, \alpha_{1} = 0.446, \alpha_{2} = 0.66$. 
Then, we define five crystallites in five small square patches, each  with a length of $40$, located at $(200, 200)$, $(150, 600)$, $(350, 400)$, $(600, 300)$ and $(550, 700)$ respectively. 
In order to produce crystallites with different orientations, we use the following affine transformation to generate rotation by five diffferent angles $\rho=-\frac{3 \pi}{4}, -\frac{\pi}{4}, 0, \frac{\pi}{4}, \frac{3 \pi}{4}$ respectively 
\begin{equation}
x_{j}(x, y)=x \sin (\rho)+y \cos (\rho), \quad y_{j}(x, y)=-x \cos (\rho)+y \sin (\rho).
\end{equation}
We choose Fourier modes $N^2=1024^2$ to discretize the space and $\delta t =0.02$ to discretize the time. 
And we set the other parameters as $\epsilon=0.25, T=800$. 
The relaxation parameter $\theta_0^{n+1}$ is also always zero in this example. 
We present the crystal growth in a super-cooled liquid by using the R-ESAV-1/BDF$2$ scheme for the PFC equation in Fig.\,\ref{Fig:PFC-2D-RESAV1}, which indicate that the different alinement of the crystallites leads to defects and dislocations, just similar with those in \cite{yang2017linearly, li2020stability}.

\begin{figure}[htbp]
\centering
\subfigure[profiles of $\phi$ at $T=0, 100, 200, 300$]{
	\hspace{-9mm}
	\includegraphics[width=4.7cm]{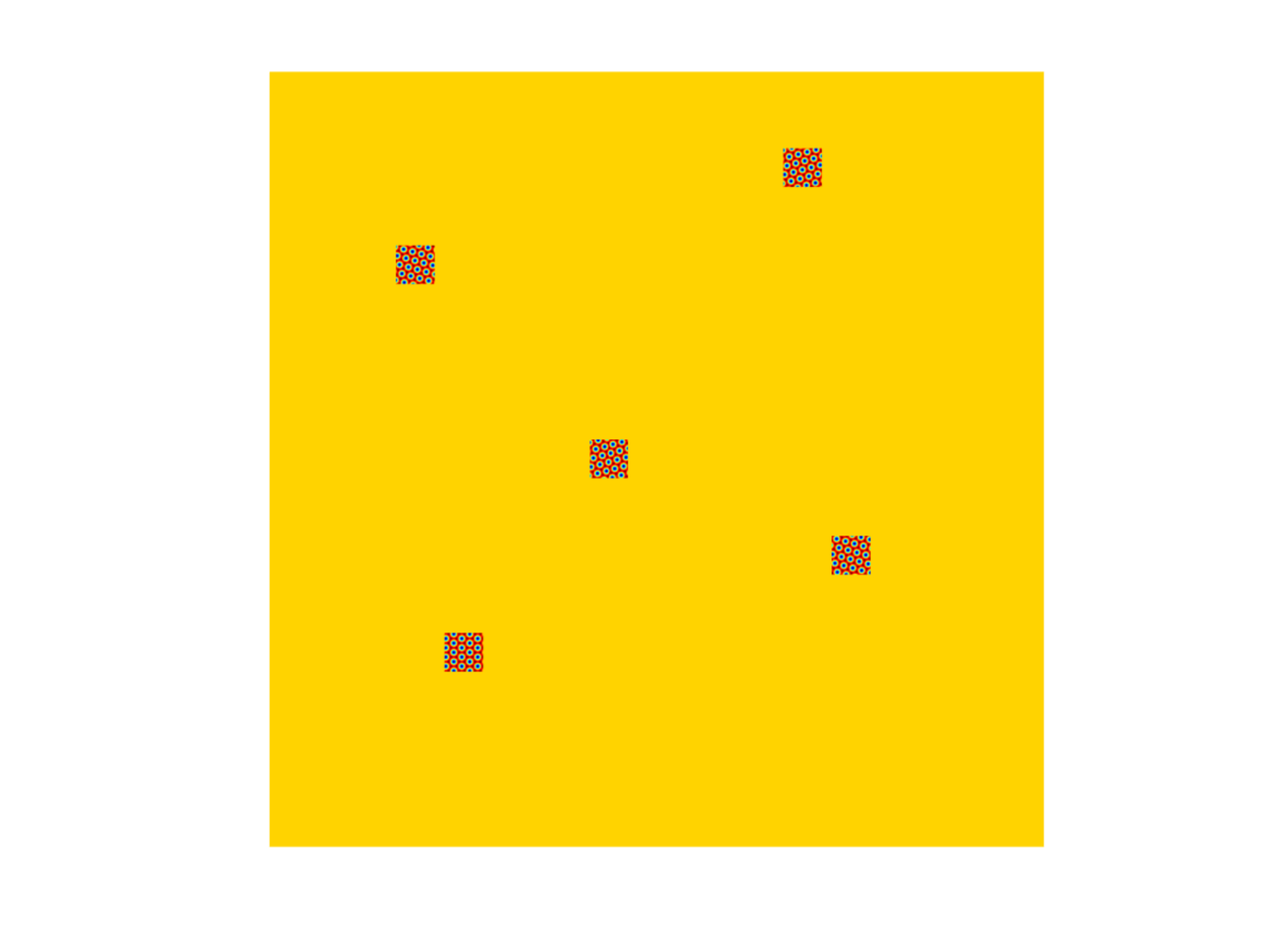}\hspace{-9mm}
	\includegraphics[width=4.7cm]{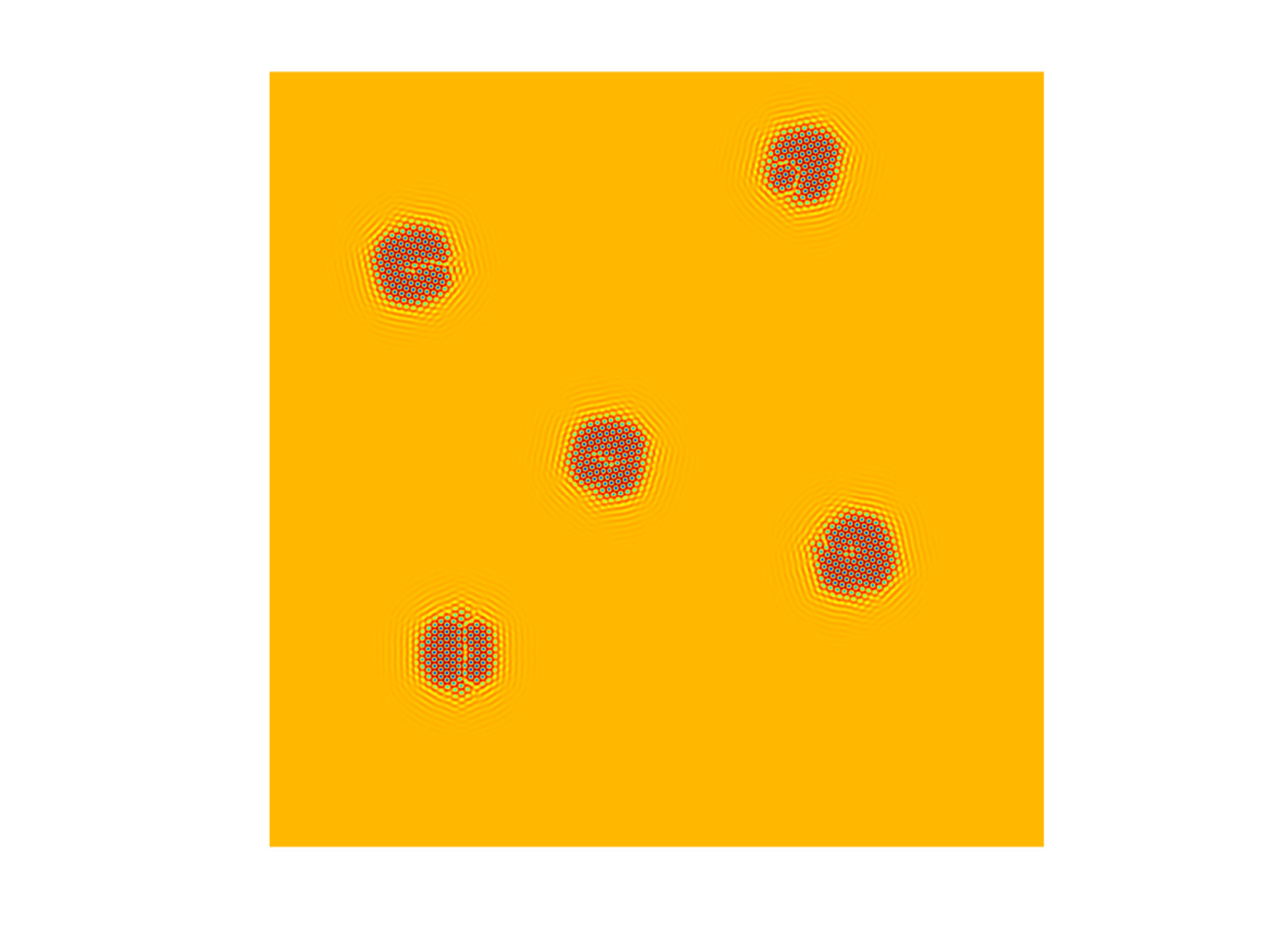}\hspace{-9mm}
	\includegraphics[width=4.7cm]{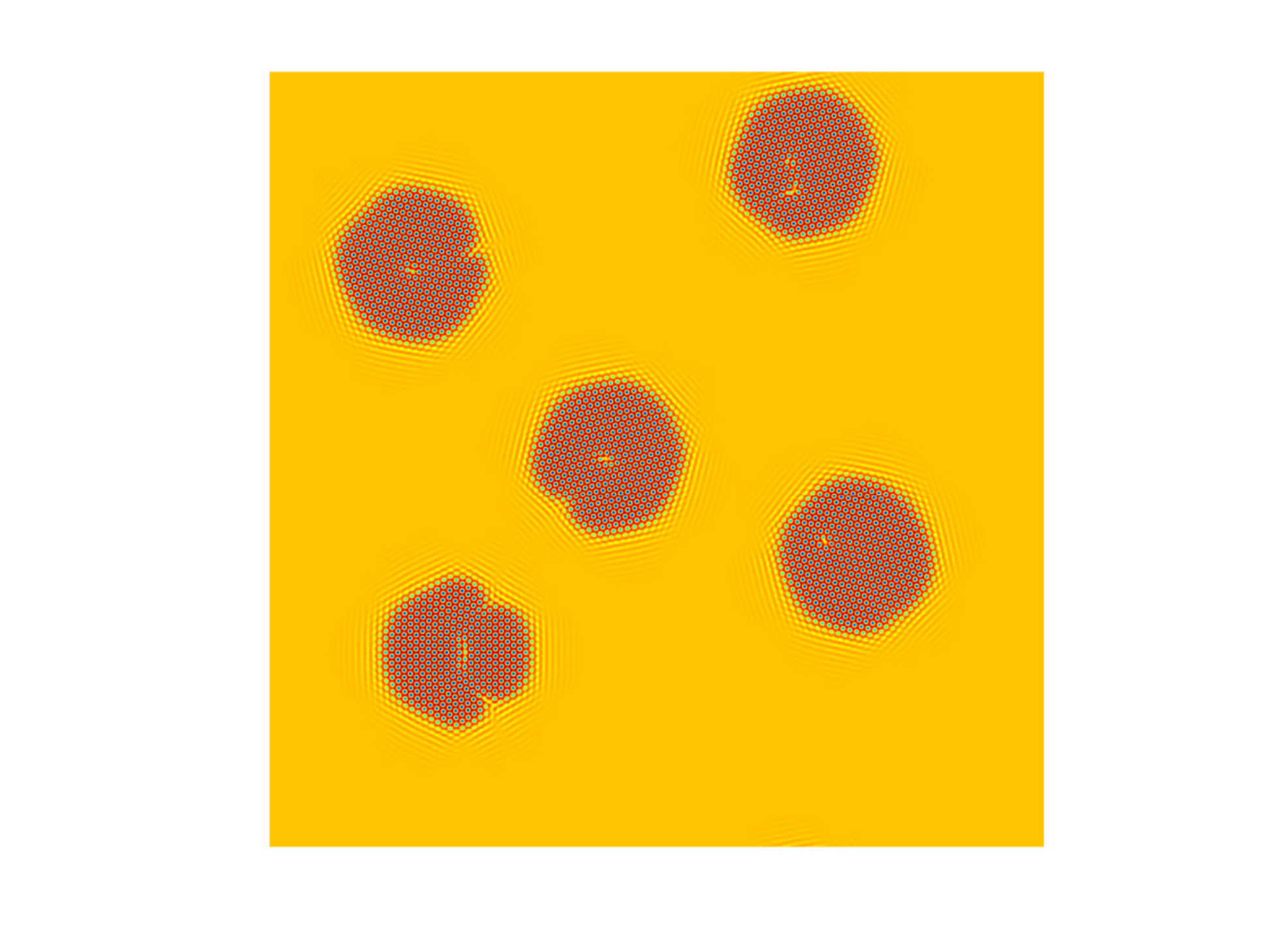}\hspace{-9mm}
	\includegraphics[width=4.7cm]{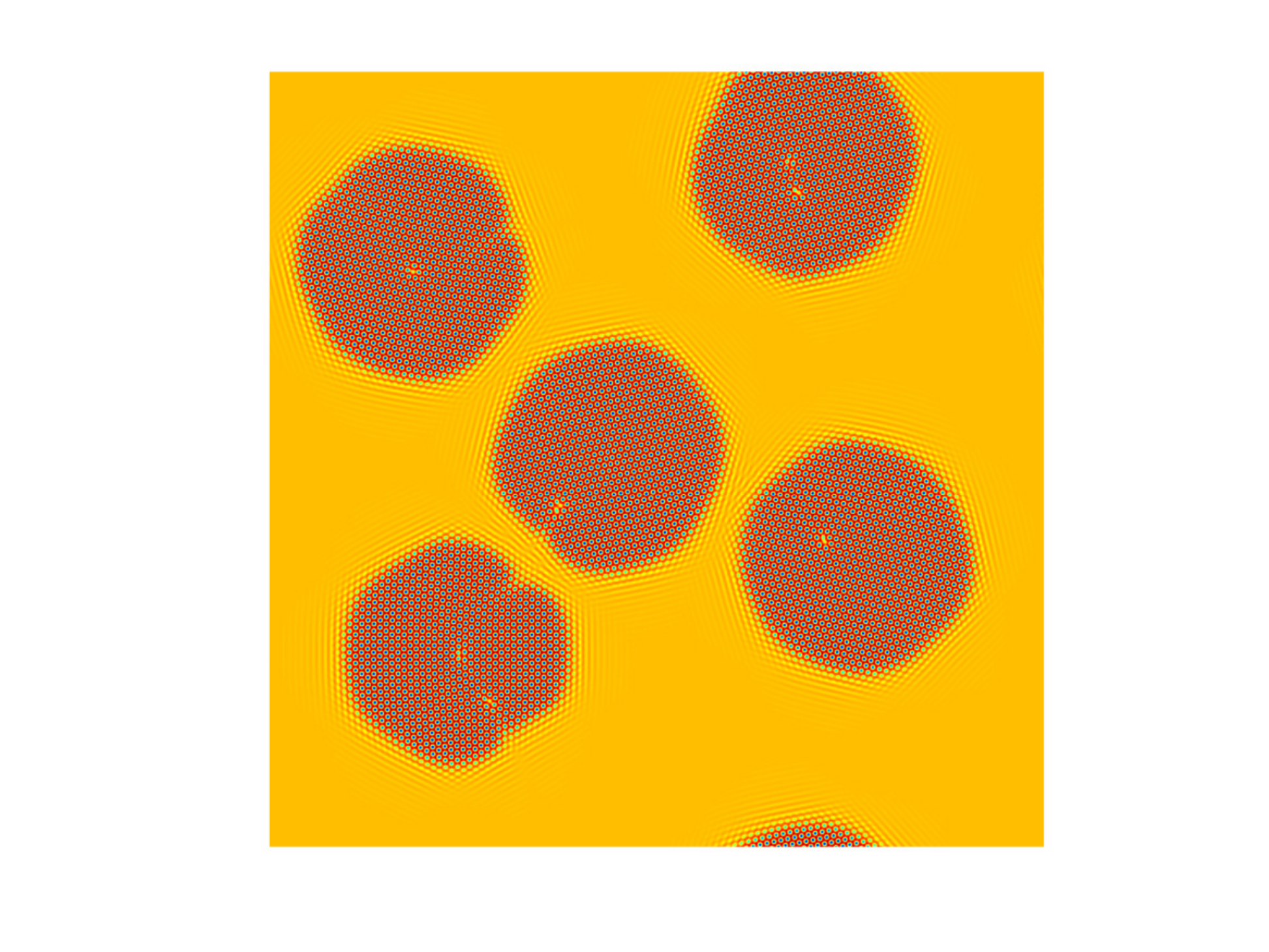}
} 
\subfigure[profiles of $\phi$ at $T=400, 500, 600, 800$]{
   \hspace{-9mm}
	\includegraphics[width=4.7cm]{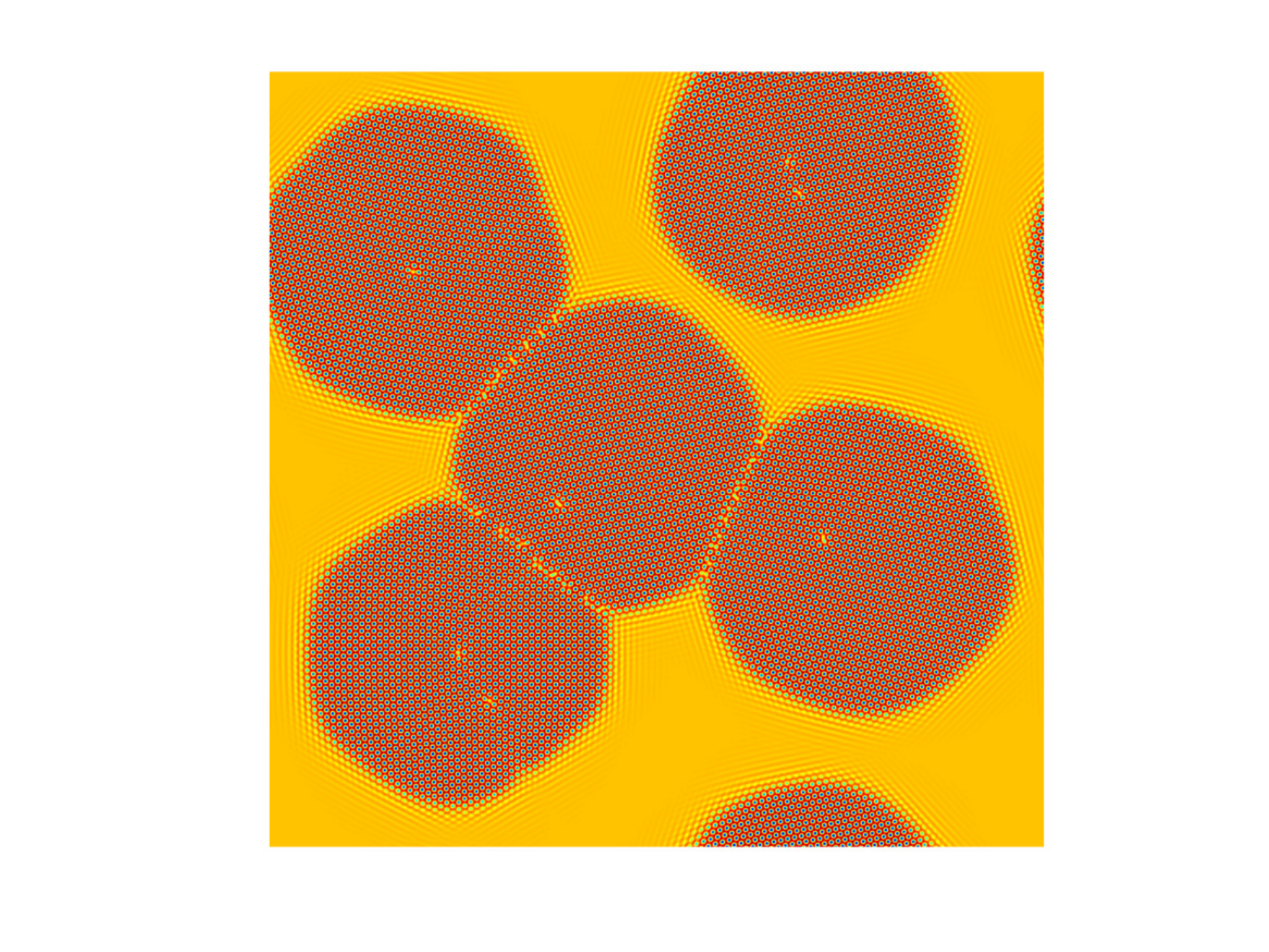}\hspace{-9mm}
	\includegraphics[width=4.7cm]{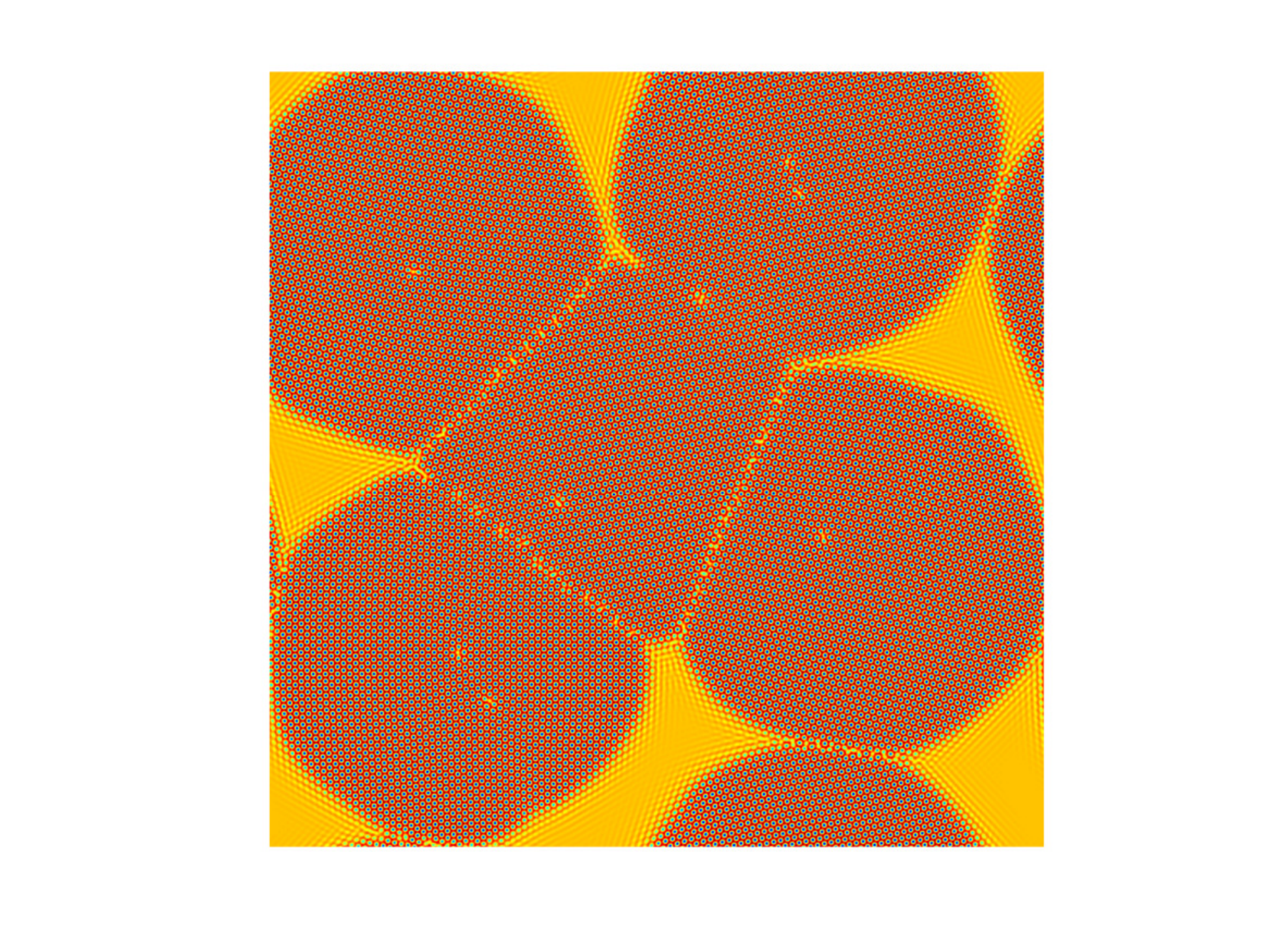}\hspace{-9mm}
	\includegraphics[width=4.7cm]{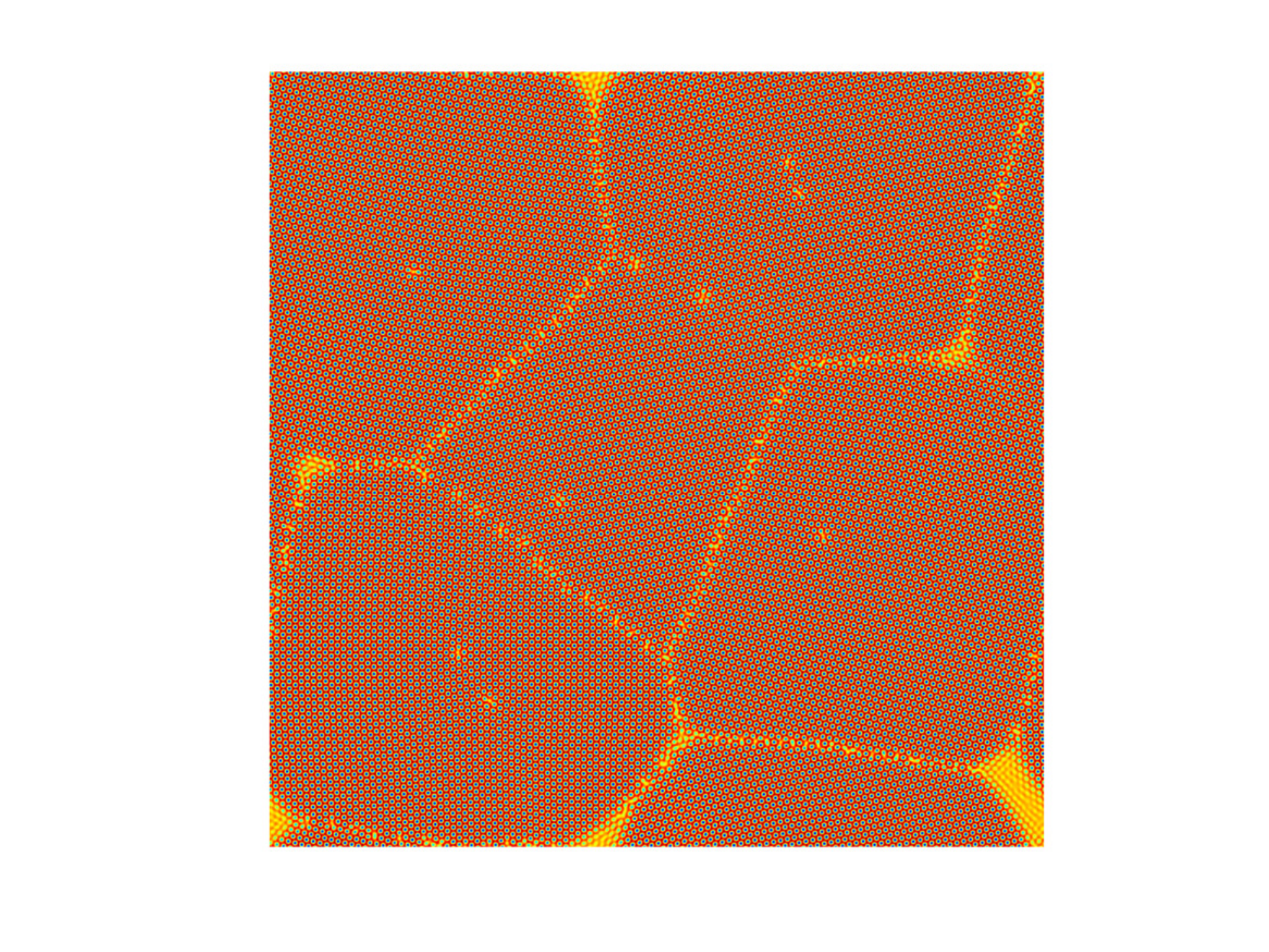}\hspace{-9mm}
	\includegraphics[width=4.7cm]{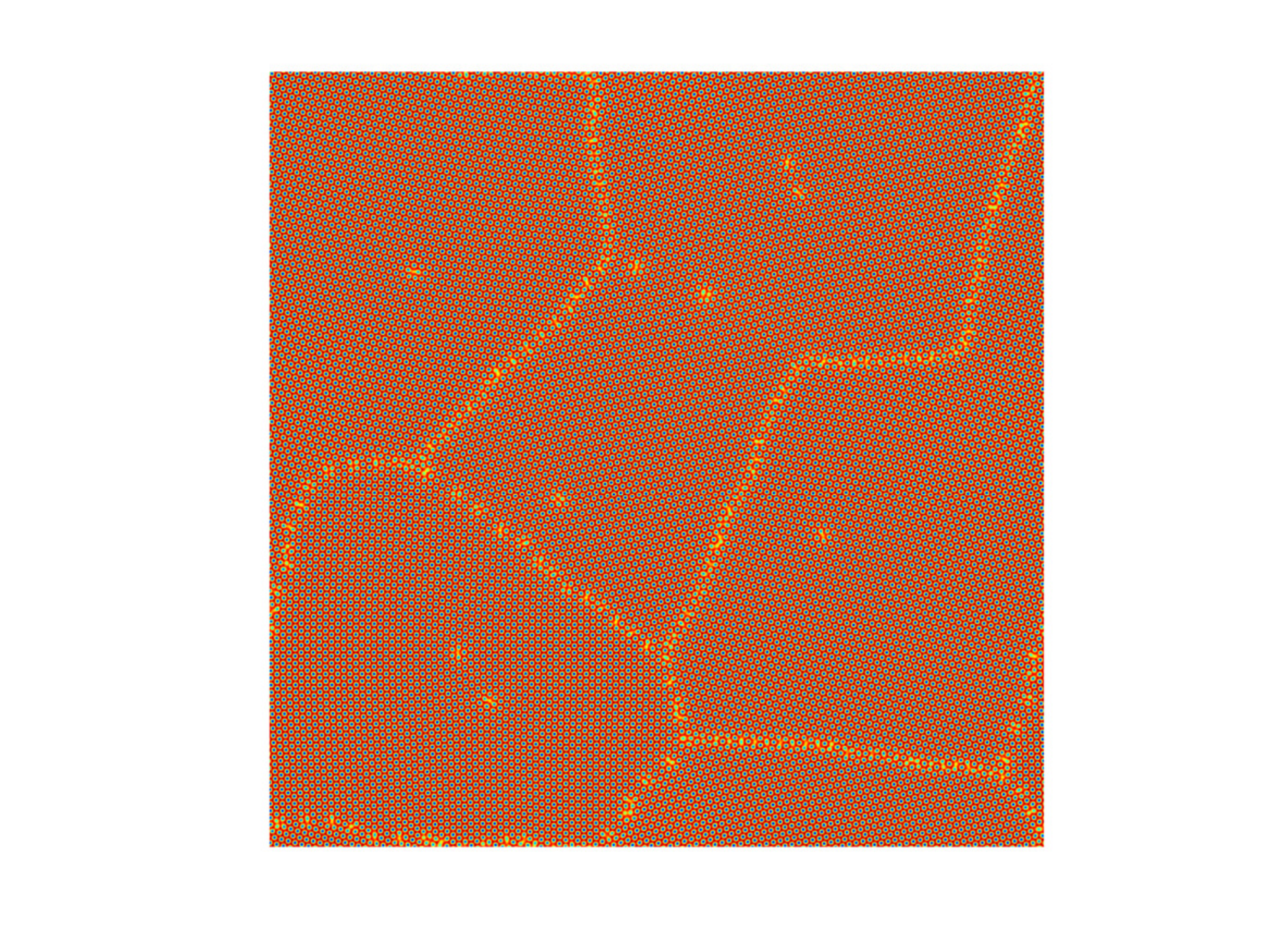}
}
\label{Fig:PFC-2D-RESAV1}
\caption{Example 3 (\romannumeral1). The $2$D dynamic evolution of crystal growth in a supercooled liquid driven by the PFC equation obtained by R-ESAV-1/BDF$2$ scheme. Snapshots of the numerical solution $\phi$ at $T=0,$ $100,$ $200,$ $300,$ $400,$ $500,$ $600,$ $800,$ respectively.}
\end{figure}

(\romannumeral2)  
The initial condition is set  to be
\begin{equation}
	\phi\left(x, y, 0\right)=0.07+0.07\delta,
\end{equation}
where $\delta=rand(x, y)$ is the uniformly distributed random number in $[-1, 1]$ with zeros mean.  Set the computational domain to $\Omega=[0, 128]^{2}$. 
The model parameter is chosen as $\epsilon=0.025$, time step is $\delta t=0.1$ and $N^{2}=256^{2}$ Fourier modes to discretize the space. 
It presents the configuration evolution in Fig.\,\ref{Fig:PFC-rand-2D-RESAV1}  and can observe from that uniform phase separation is formed finally. 
Similar computation results can be found in  \cite{liu2020exponential, liu2021highly}. 

\begin{figure}[htbp]
	\centering
		\hspace{-10mm}
		\includegraphics[width=4.7cm]{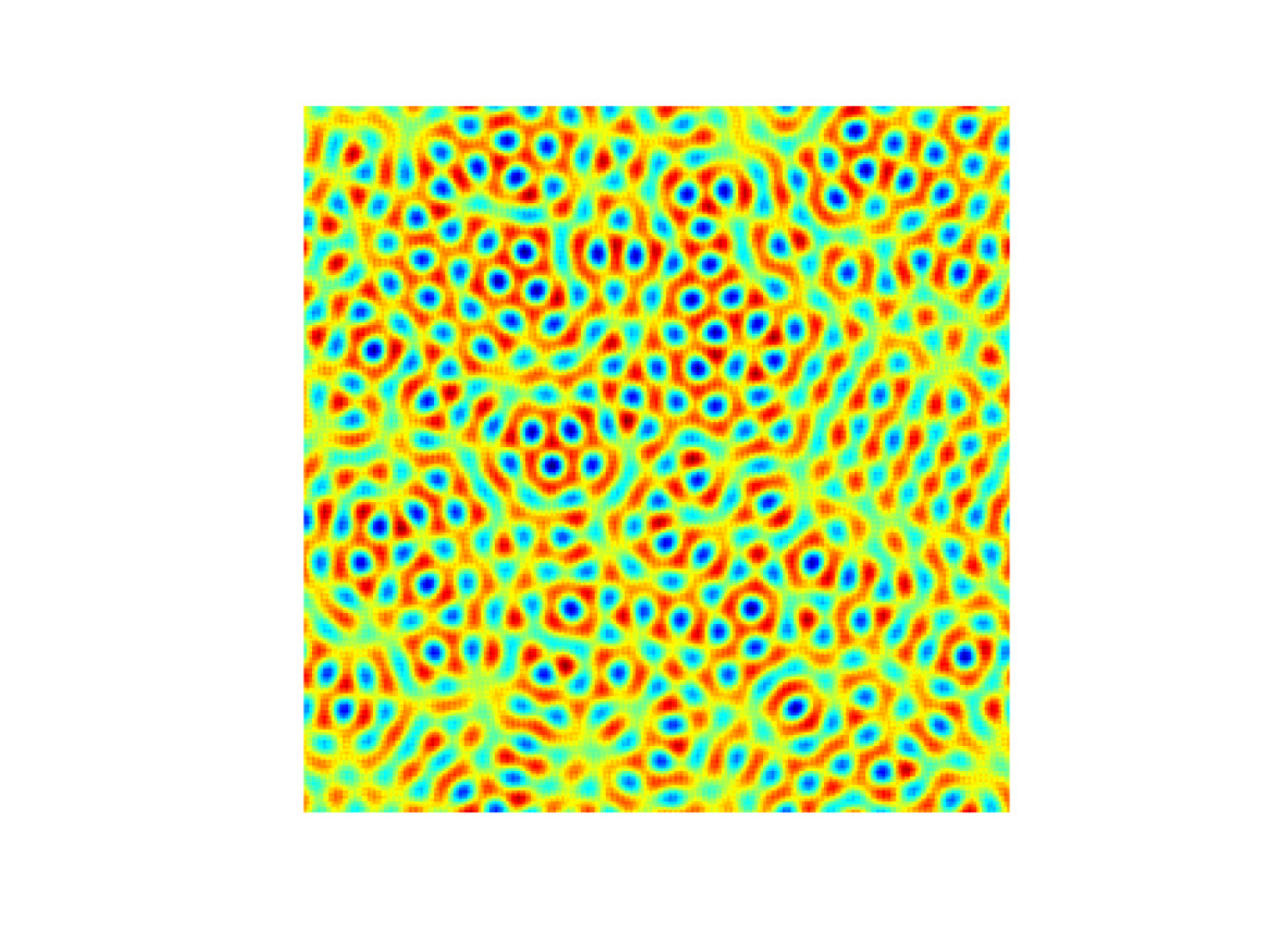}	\hspace{-12mm}
		\includegraphics[width=4.7cm]{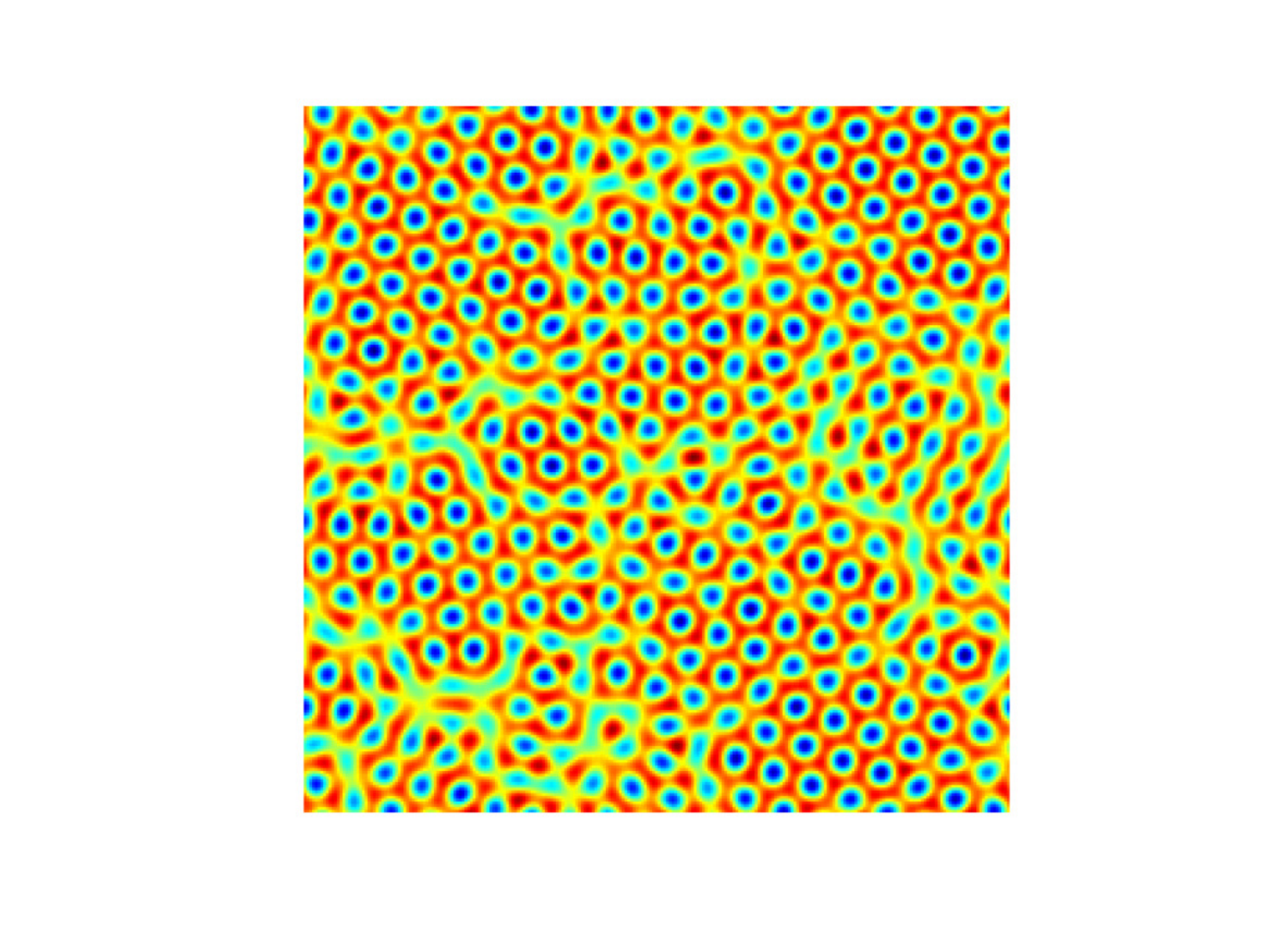}	\hspace{-12mm}
		\includegraphics[width=4.7cm]{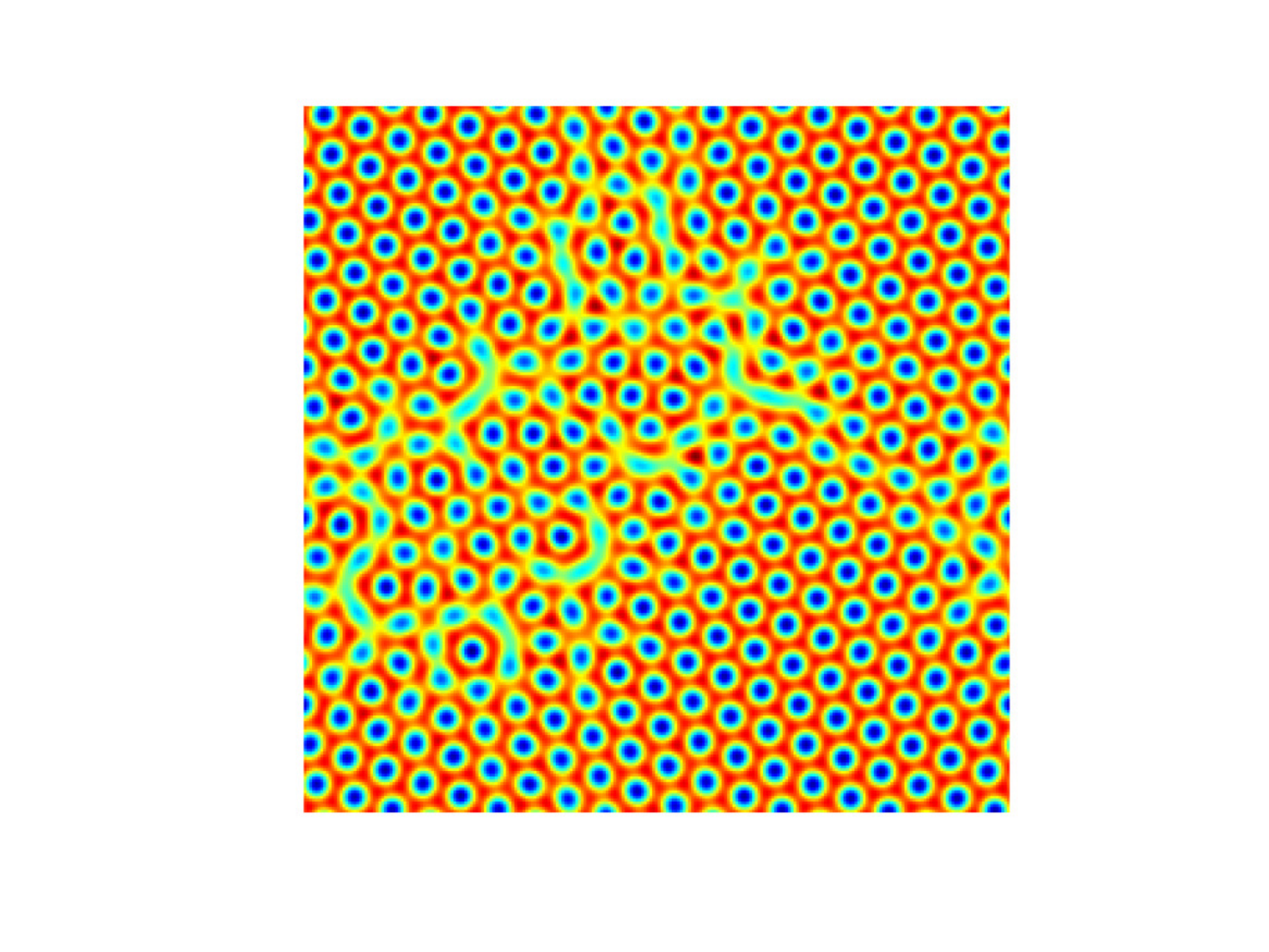}	\hspace{-12mm}
		\includegraphics[width=4.7cm]{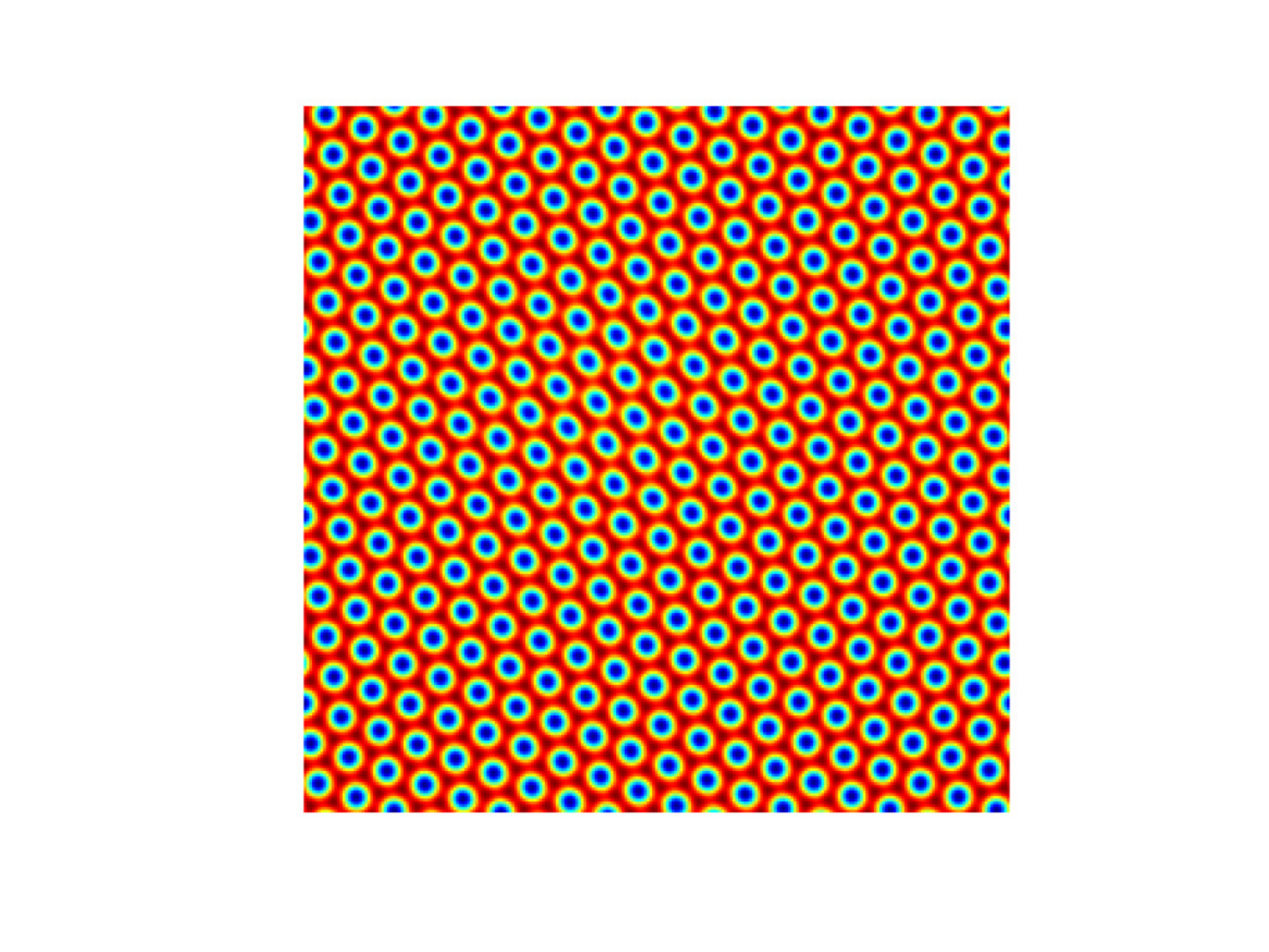}\hspace{-12mm}
	\caption{Example 3 (\romannumeral2). The $2$D configuration evolution driven by the PFC equation obtained by R-ESAV-1/BDF$2$ scheme. Snapshots of the numerical solution $\phi$ at $T=400,$ $600,$ $1000,$ $2500,$ respectively.}
	\label{Fig:PFC-rand-2D-RESAV1}
\end{figure}

(\romannumeral3) Then we simulate the crystal growth in a super-cooled liquid in $3D$. 
We choose the computational domain $[0, 100]^3$ and the initial condition are two crystallites generated by 
$$\phi(x, y, t=0)=0.285+0.01\delta.$$
The other parameters are chosen as $\epsilon=0.25, \delta t = 0.02, T=2000$. 
We adopt Fourier modes $N^3=128^{3}$  to discretize space. 
It can be observed that the effects of different arrangement of crystallites on the growth of the crystalline phase and the motion of crystal-liquid interfaces in Fig.\,\ref{Fig:PFC-3D-crystal-growth-ESAV1}, where these results are also consistent with those in \cite{li2019efficient}.

\begin{figure}[htbp]
\centering
\subfigure[profiles of $\phi=0$ at $T=0, 60, 900$]{
	\includegraphics[width=5.3cm]{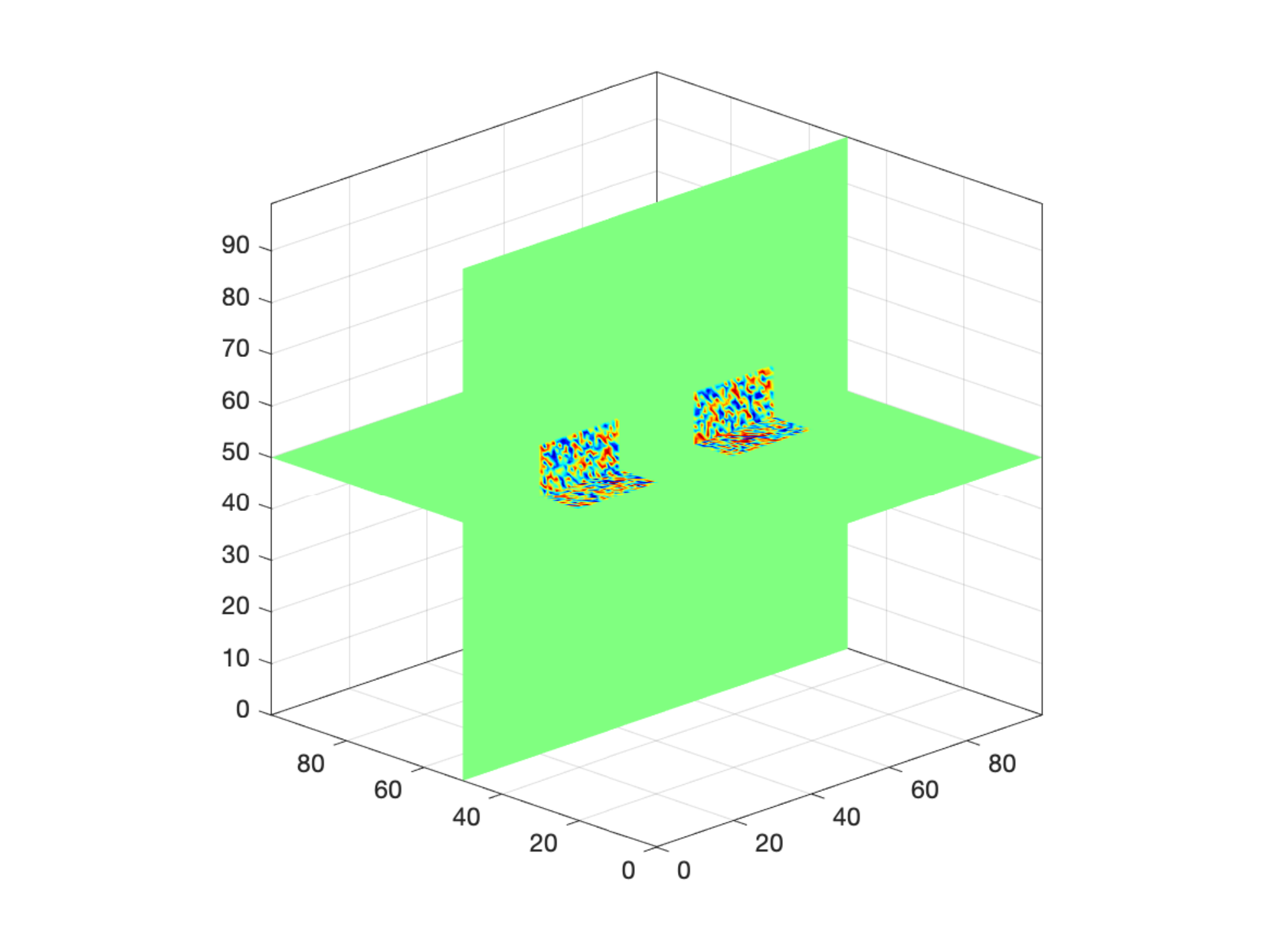}\hspace{-9mm}
	\includegraphics[width=5.3cm]{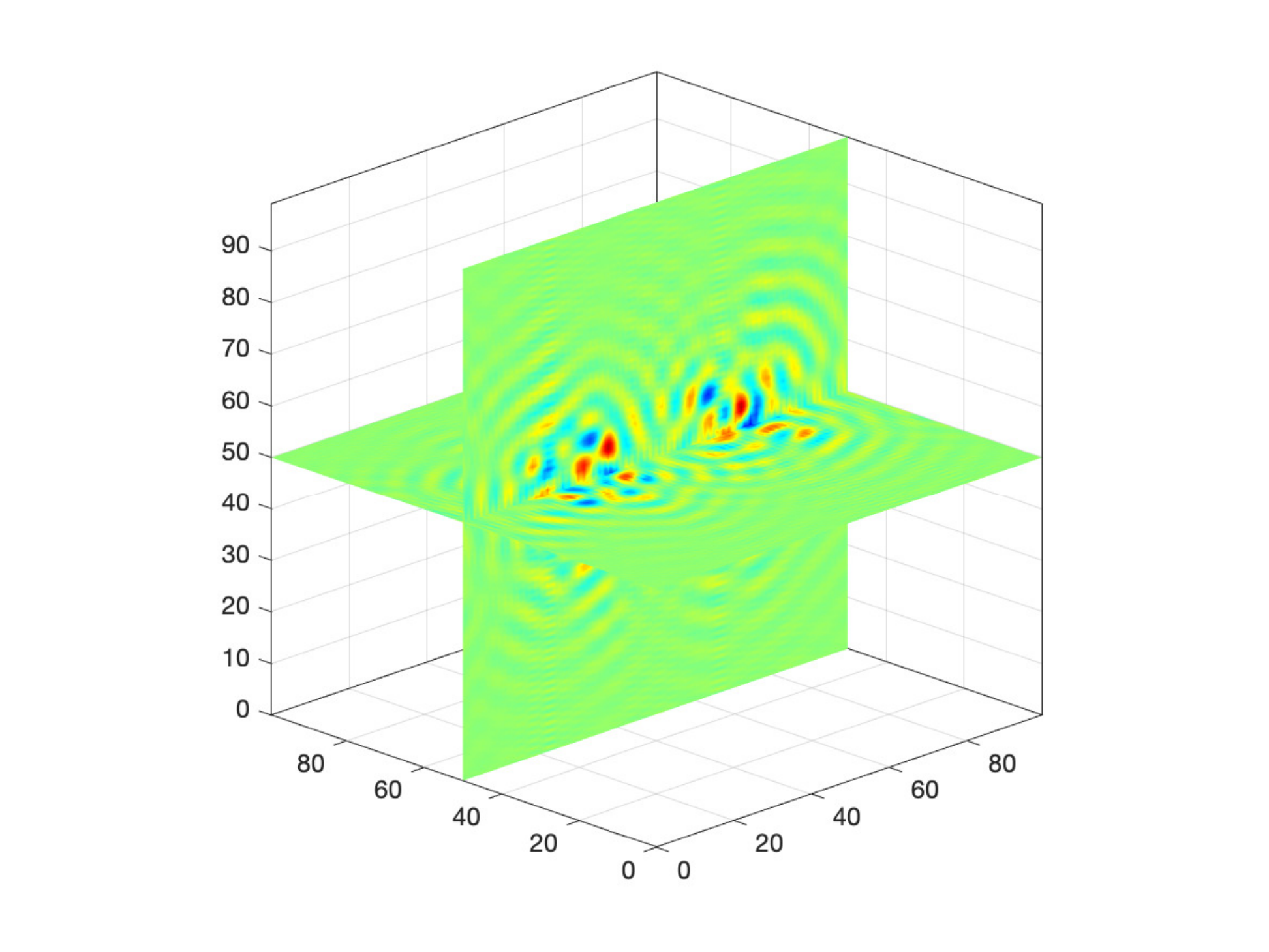}\hspace{-9mm}
	\includegraphics[width=5.3cm]{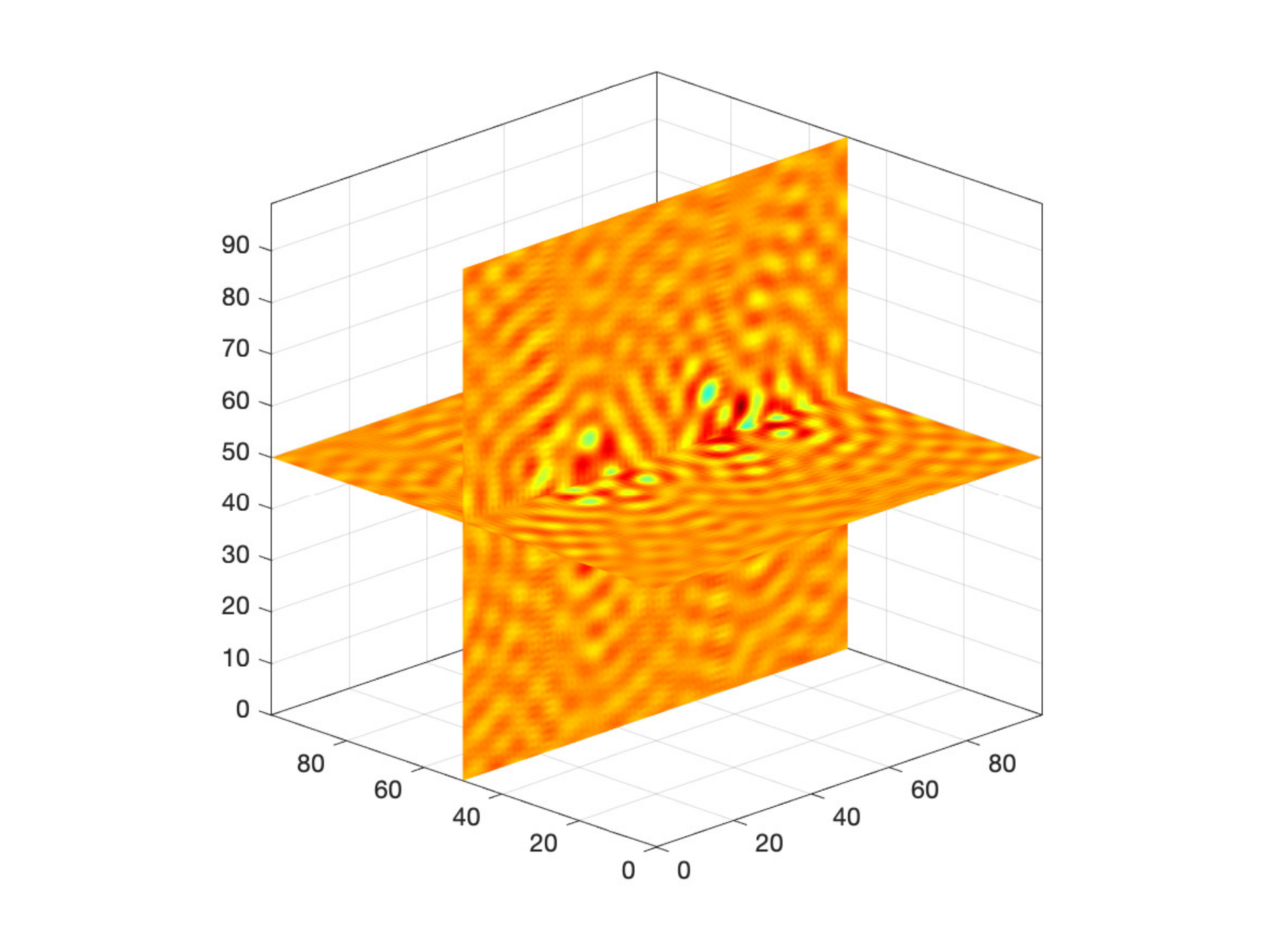}
} 
\subfigure[profiles of $\phi=0$ at $T=1000, 1100, 2000$]{
	\includegraphics[width=5.3cm]{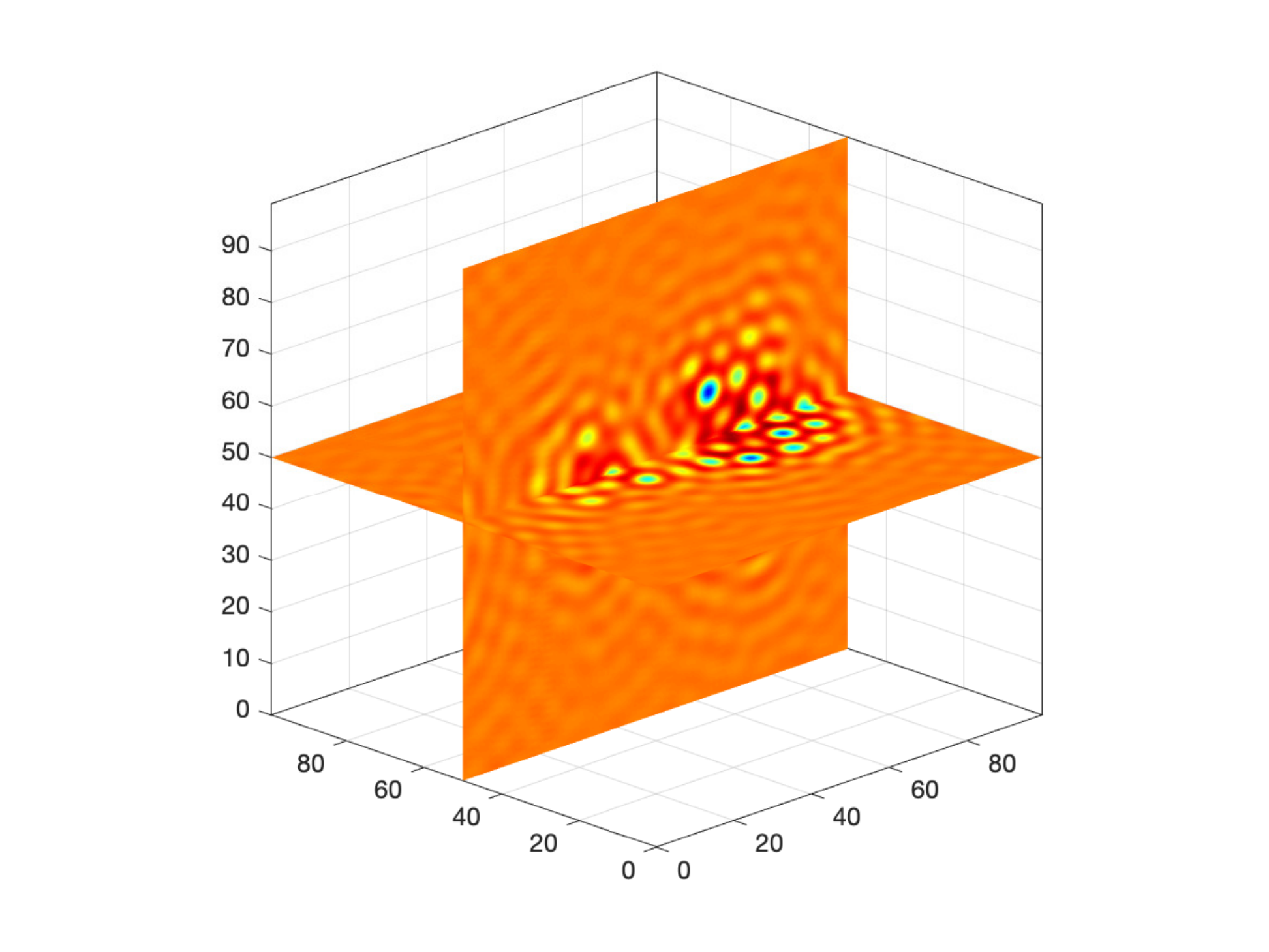}\hspace{-9mm}
	\includegraphics[width=5.3cm]{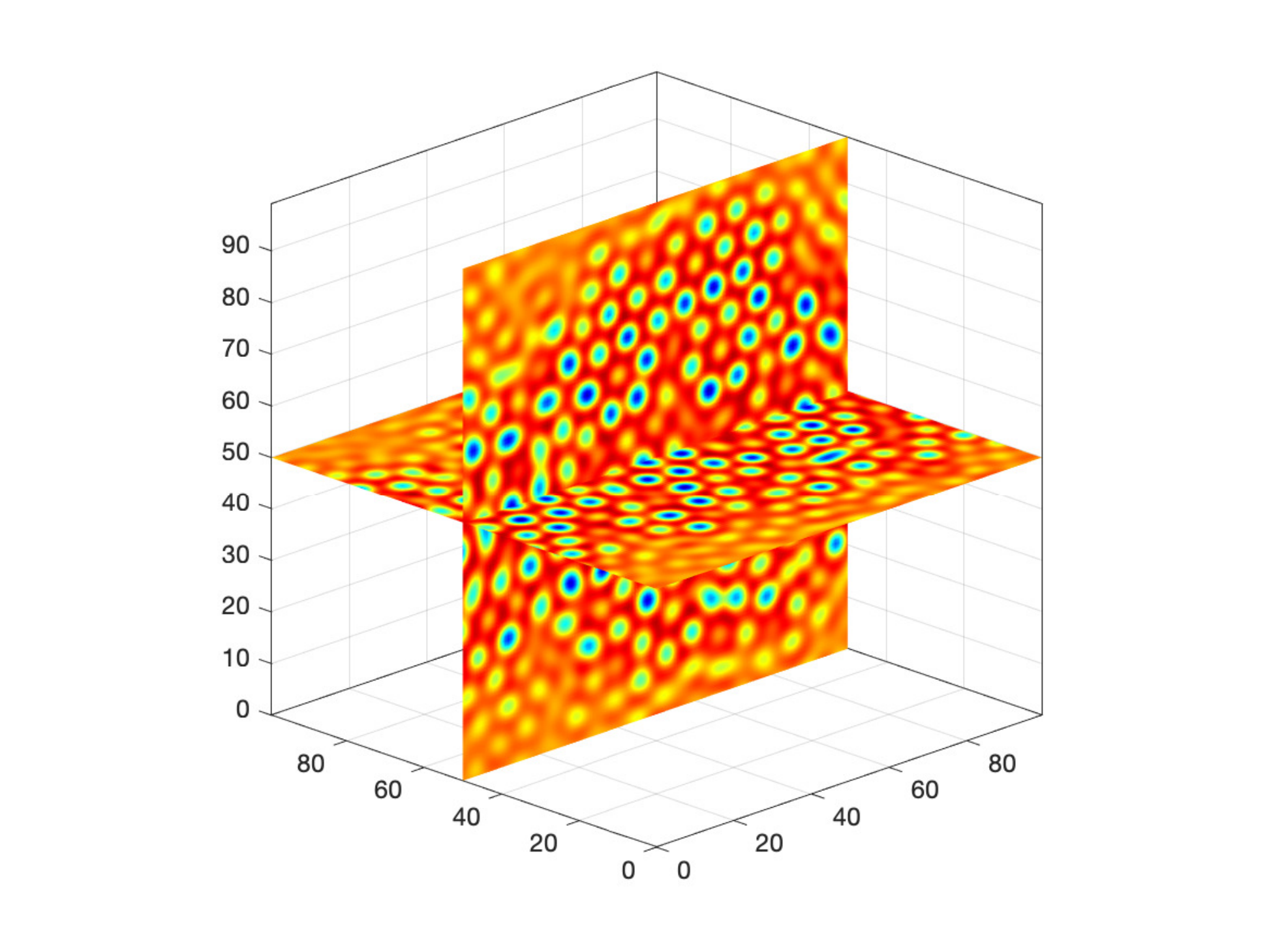}\hspace{-9mm}
	\includegraphics[width=5.3cm]{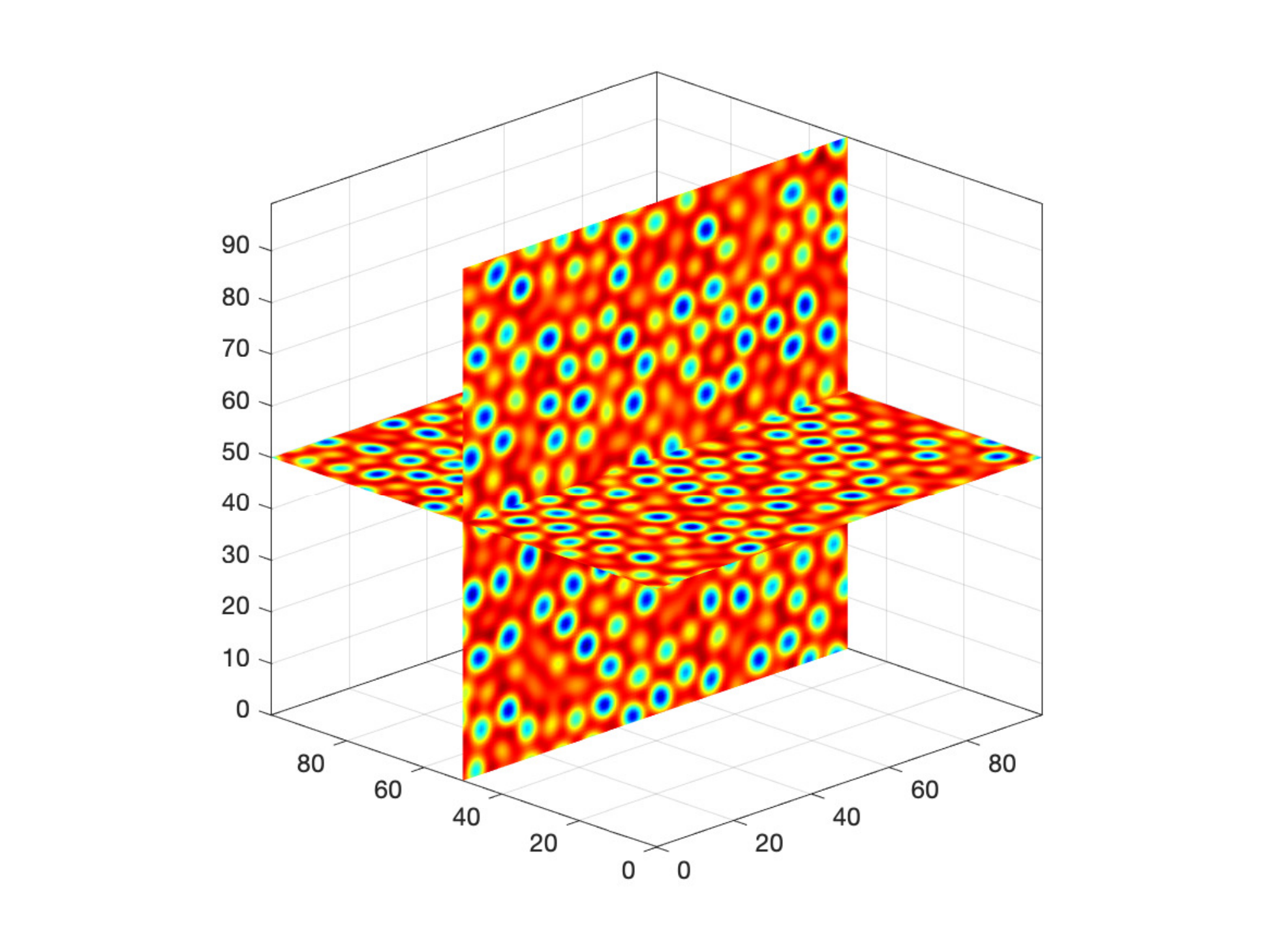}
}
\label{Fig:PFC-3D-crystal-growth-ESAV1}
\caption{Example 3 (\romannumeral3). The $3$D dynamic evolution of crystal growth in a supercooled liquid driven by the PFC equation obtained by R-ESAV-1/BDF$2$ scheme. Snapshots of the numerical solution $\phi$ at $T=0,$ $60,$ $900,$ $1000,$ $1100,$ $2000,$ respectively.}
\end{figure}

(\romannumeral4)  Finally we study phase transition behaviors in $3D$.  The initial data are chosen as
$$\phi(x, y, t=0)=\bar{\phi}+0.01\delta,$$  
and set computational domains $\left[0, 50\right]^{3}$. 
Other parameters are chosen as $\epsilon=0.56, \delta t=0.02, T=3000$ and Fourier modes $N^3=64^3$. We present the steady state
microstructure of the phase transition behavior for $\bar{\phi}=0.20, 0.35$ and $0.43$, respectively in Fig.\,\ref{Fig:PFC-3D-rand-ESAV1}. 


\begin{figure}[htbp]
\centering
	\includegraphics[width=5.3cm]{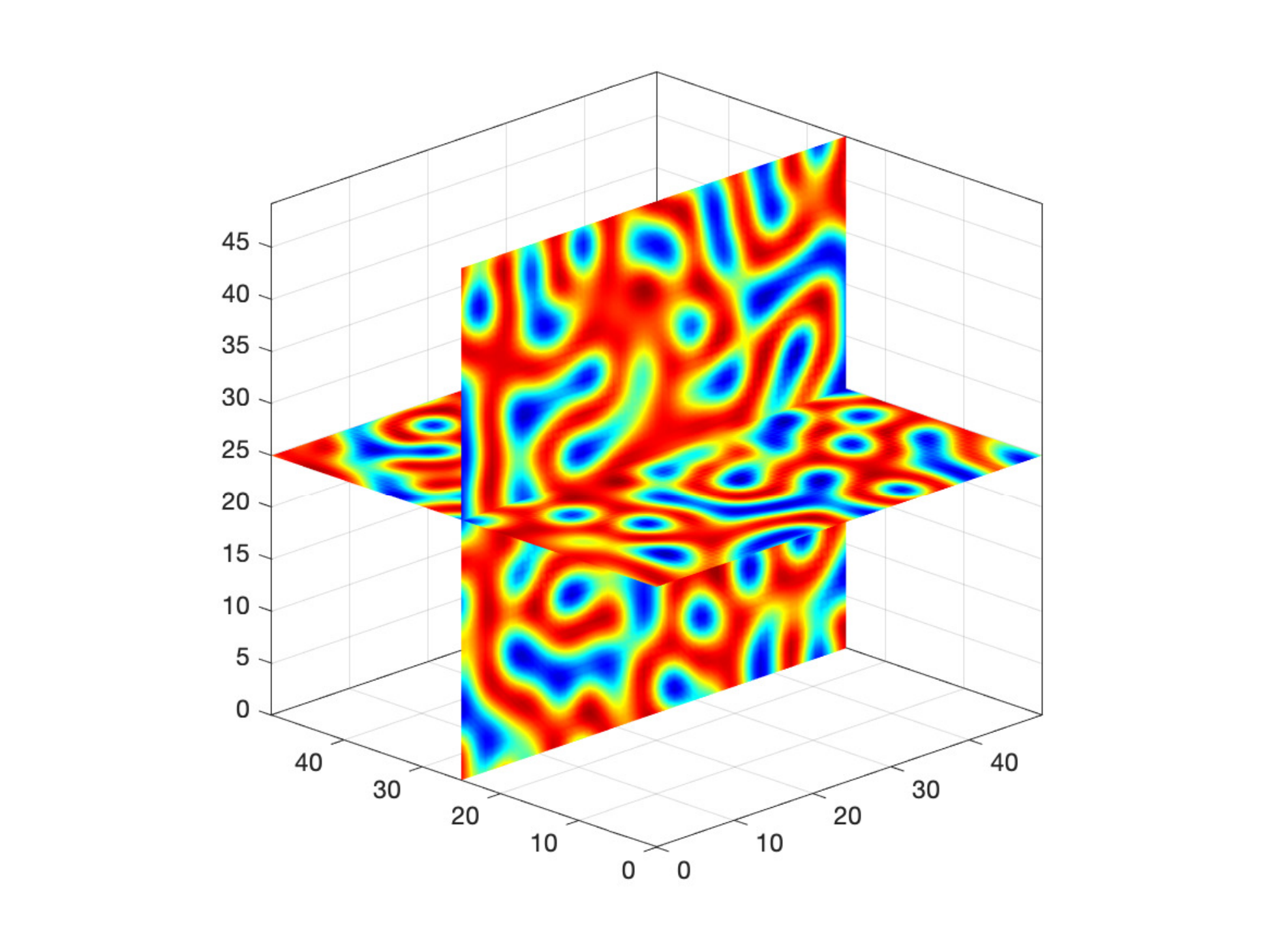}\hspace{-9mm}
	\includegraphics[width=5.3cm]{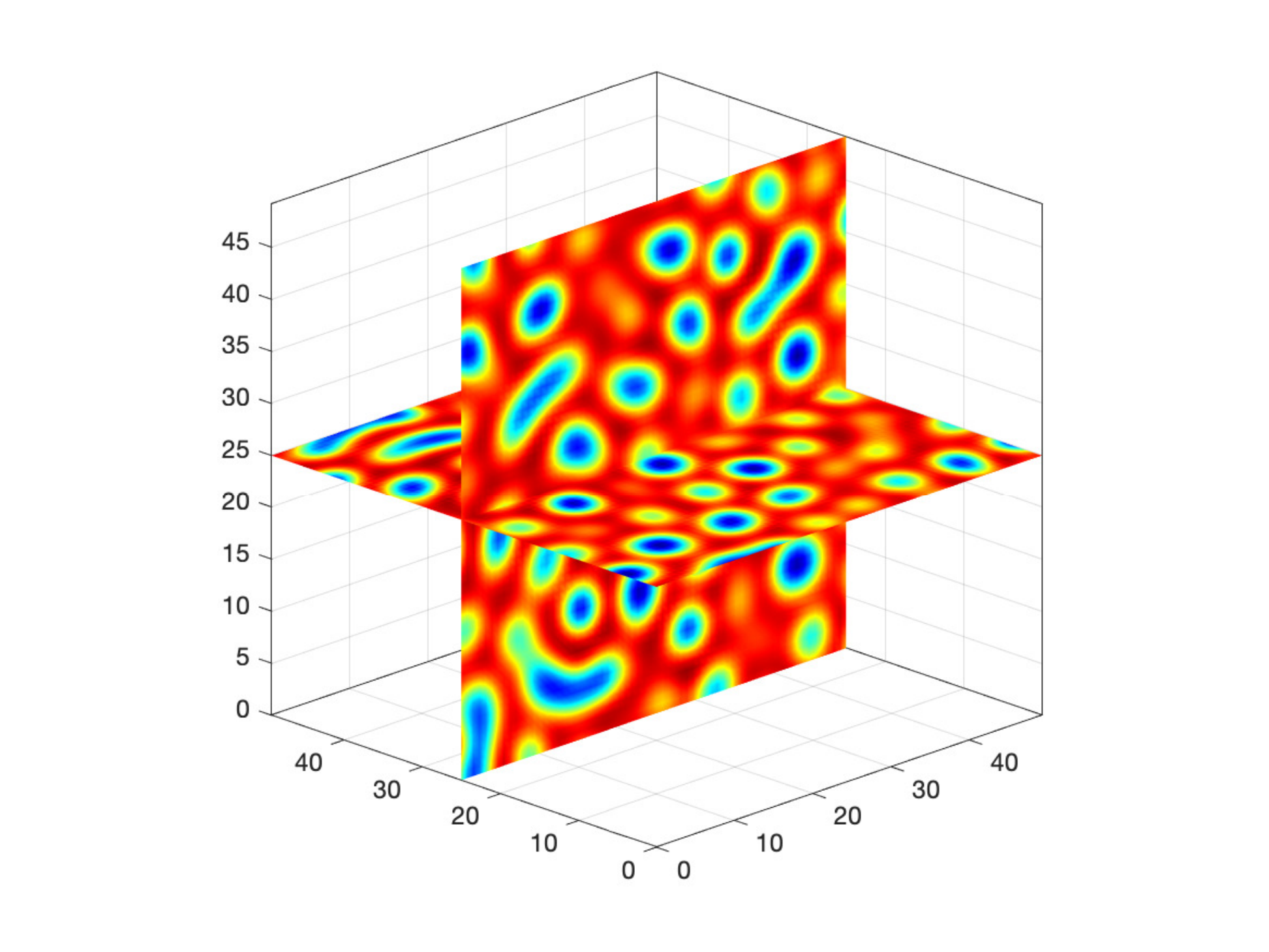}\hspace{-9mm}
	\includegraphics[width=5.3cm]{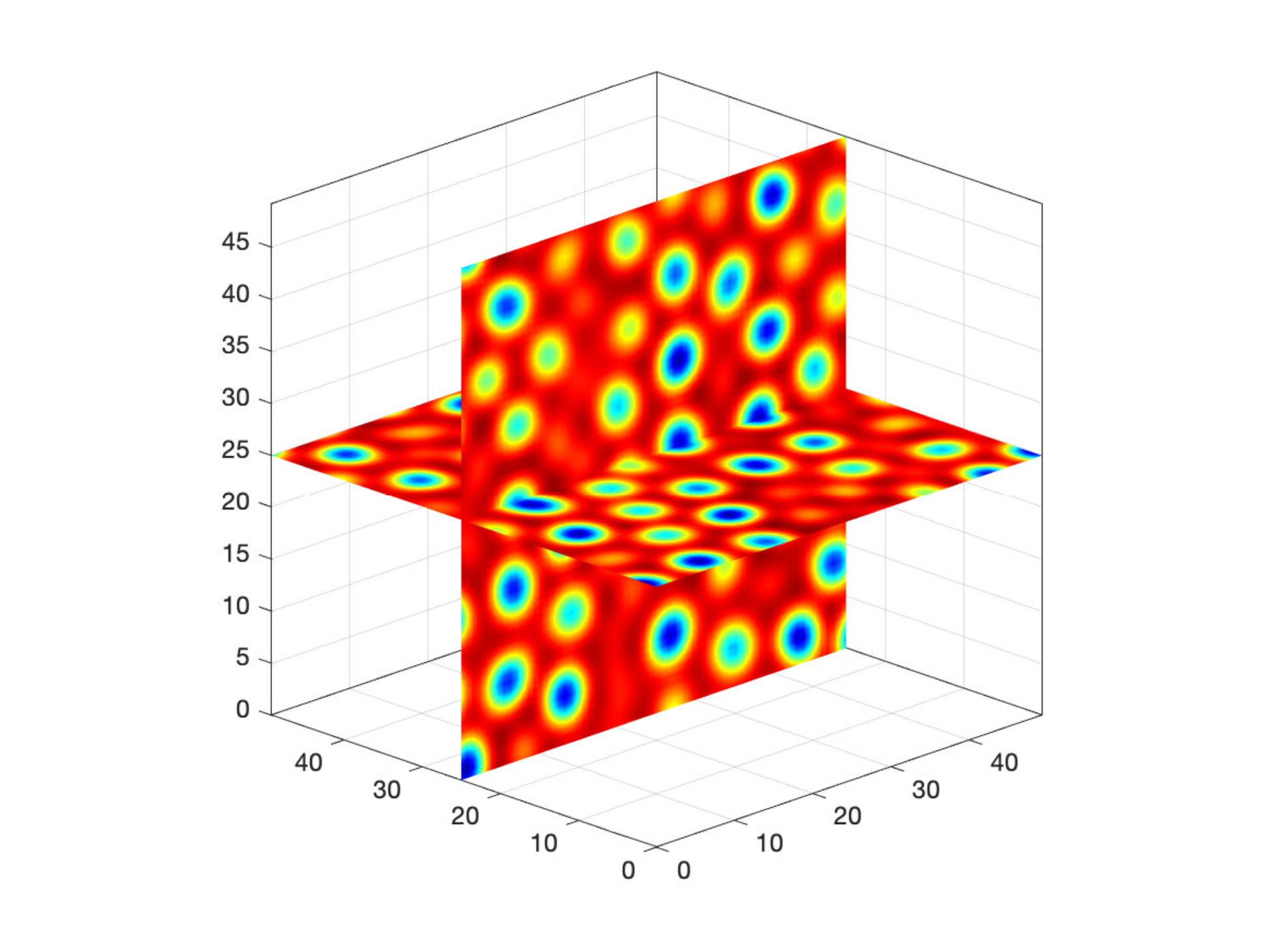}
\label{Fig:PFC-3D-rand-ESAV1}
\caption{Example 3 (\romannumeral4). Evolution of $\phi$ in $3$D driven by the PFC equation obtained by R-ESAV-1/BDF$2$ scheme with different $\bar \phi$ at $T=3000$. First: $\bar{\phi}=0.2$; Second: $\bar{\phi}=0.35$; Third: $\bar{\phi}=0.43$.}
\end{figure}

 \textbf{Example 4.} 
 In this example, we simulate the phase-field vesicle membrane (PFVM) model \cite{cheng2018multiple, cheng2020global} to demonstrate the effectiveness of the relaxed MESAV schemes. We consider the following penalized free energy to preserve the  area and volume of vesicle membrane, 
\begin{equation}
	E(\phi)=E_{b}(\phi)+\frac{1}{2 \sigma_{1}}(V(\phi)-v_0)^{2}+\frac{1}{2 \sigma_{2}}(S(\phi)-s_0)^{2},
\end{equation}
where $\sigma_{1}, \sigma_{2}$ are two small parameters, and $v_0, s_0$ are the initial volume and initial surface area, respectively. The definition of  bending energy $E_b(\phi)$, volume $V(\phi)$ and surface area $S(\phi)$ of the vesicle are as follows
\begin{equation}
	E_{b}(\phi)=\frac{\epsilon}{2} \int_{\Omega}\left(-\Delta \phi+\frac{1}{\epsilon^{2}} G(\phi)\right)^{2} \mathrm{d} \boldsymbol{x}=\frac{\epsilon}{2} \int_{\Omega} H^{2} \mathrm{d} \boldsymbol{x},
\end{equation}
\begin{equation}
	V(\phi)=\int_{\Omega}(\phi+1) \mathrm{d} \boldsymbol{x} \quad \text { and } \quad S(\phi)=\int_{\Omega}\left(\frac{\epsilon}{2}|\nabla \phi|^{2}+\frac{1}{\epsilon} F(\phi)\right) \mathrm{d} \boldsymbol{x},
\end{equation}
where
$$
H:=-\Delta \phi+\frac{1}{\epsilon^{2}} G(\phi), \quad F(\phi)=\frac{1}{4}\left(\phi^{2}-1\right)^{2}, \quad G(\phi):=F^{\prime}(\phi).
$$
Then,  the dynamic equation based on the above total energy can be described by 
\begin{equation}\label{eq:PFVM-model}
	\left\{
	\begin{array}{l}\phi_{t}=-M \mu, \\ 
		\mu=-\epsilon \Delta H+\frac{1}{\epsilon} G^{\prime}(\phi) H+\frac{1}{\sigma_{1}}(V(\phi)-v_0)+\frac{1}{\sigma_{2}}(S(\phi)-s_0)\left(-\epsilon \Delta \phi+\frac{1}{\epsilon} F^{\prime}(\phi)\right), \\ 
		H=-\Delta \phi+\frac{1}{\epsilon^{2}} G(\phi),
	\end{array} \right.
\end{equation}
with the periodic boundary condition, 
and $M$ is the mobility constant. 
Then, it can easily obtain that the system \eqref{eq:PFVM-model} satisfy the energy law as follows
\begin{equation}
	\frac{\mathrm{d}}{\mathrm{d} t} E(\phi)=-M\|\mu\|^{2}.
\end{equation}
It can be observed that the system \eqref{eq:PFVM-model} contains two nonlinear terms associated with two small parameters $\epsilon$ and $\sigma_2$ respectively. 
Therefore, two SAVs are needed to introduce to deal with the different nonlinear terms. 
In the following simulations, we set computational domain as $\Omega=(-\pi, \pi)^{3}$, and the model parameters are $\sigma_{1}=\sigma_{2}=0.01, \epsilon=\frac{6\pi}{128}, M=1$. 
We compute the results using  the  R-MESAV-1/BDF$2$ scheme with time step $\delta t=1e-4$ and $N^3=128^3$  Fourier modes.

(\romannumeral1)
We first consider the interaction of 
 four closeby spheres as the initial condition given by
\begin{equation}
\phi(x, y, z, 0)=\sum_{j=1}^{4} \tanh \left(\frac{R_{j}-\sqrt{\left(x-x_{j}\right)^{2}+\left(y-y_{j}\right)^{2}+\left(z-z_{j}\right)^{2}}}{\sqrt{2} \epsilon}\right)+3,
\end{equation}
where $R_{j}=\frac{\pi}{6}, x_{j}=0,\left(y_{1}, y_{2}, y_{3}, y_{4}\right)=\left(-\frac{\pi}{4},\frac{\pi}{4},-\frac{3 \pi}{4}, \frac{3 \pi}{4}\right),$ and $z_{j}=0$ for $j=1, 2, 3, 4$. 

Snapshots of iso-surfaces of $\phi=0$ at $t=0, 0.02, 1$ are presented in Fig.\,\ref{Fig:PFVM-RESAV1-four-balls}, which indicates that  four small spheres gradually linked the shape of `ice sugar gourd', and finally merge into a cylinder shape. 
The results are consistent with those presented in \cite{cheng2020global}. 
We also plot the evolution of relaxation factor $\theta_{0}^{n+1}$ in Fig.\,\ref{Fig:PFVM-RESAV1-four-balls-zeta}, and observe that, except the first several steps, $\theta_{0}^{n+1}$ always takes the value zeros. 

\begin{figure}[htbp]
\centering
	\includegraphics[width=5.3cm]{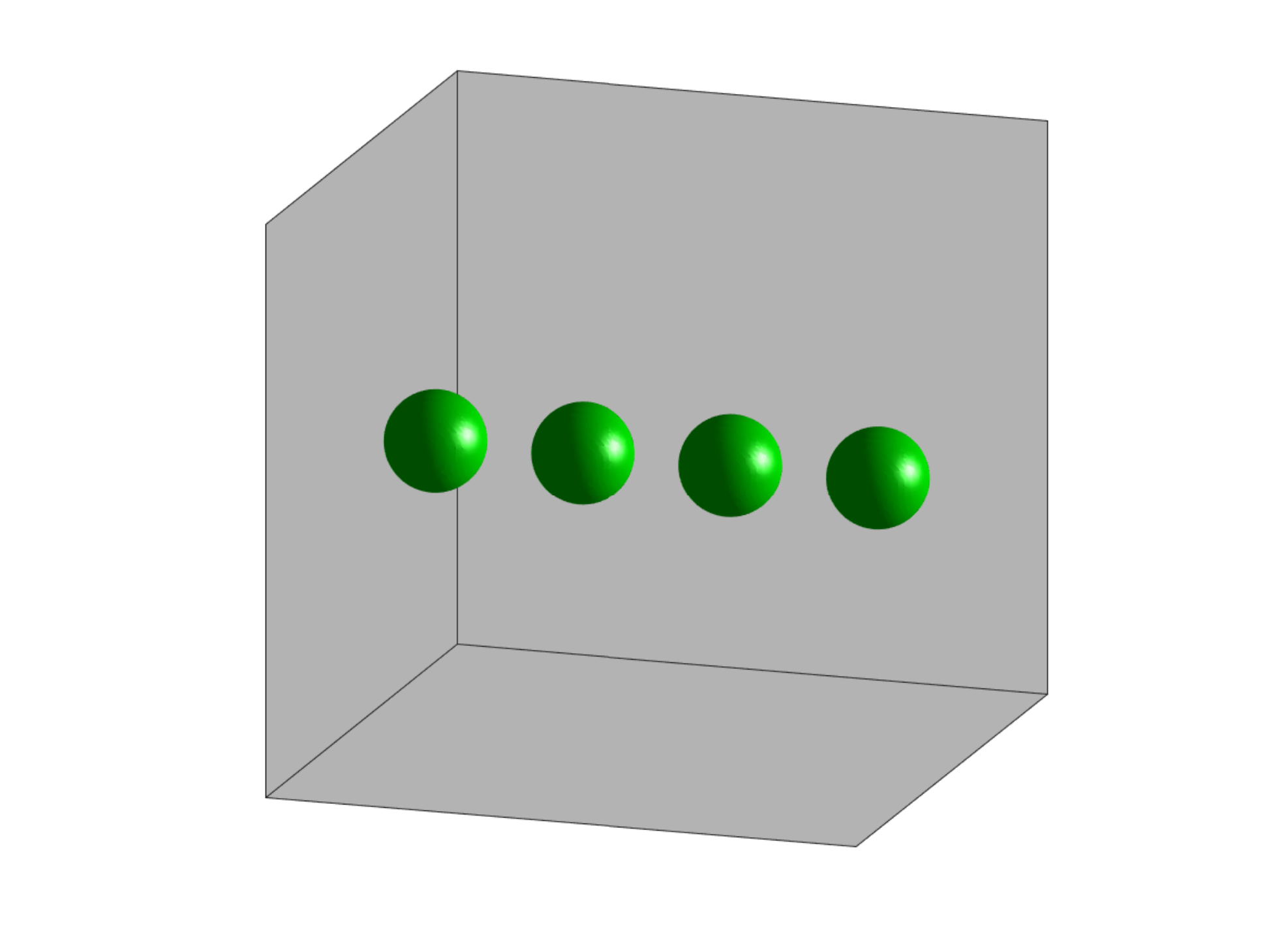}\hspace{-9mm}
	\includegraphics[width=5.3cm]{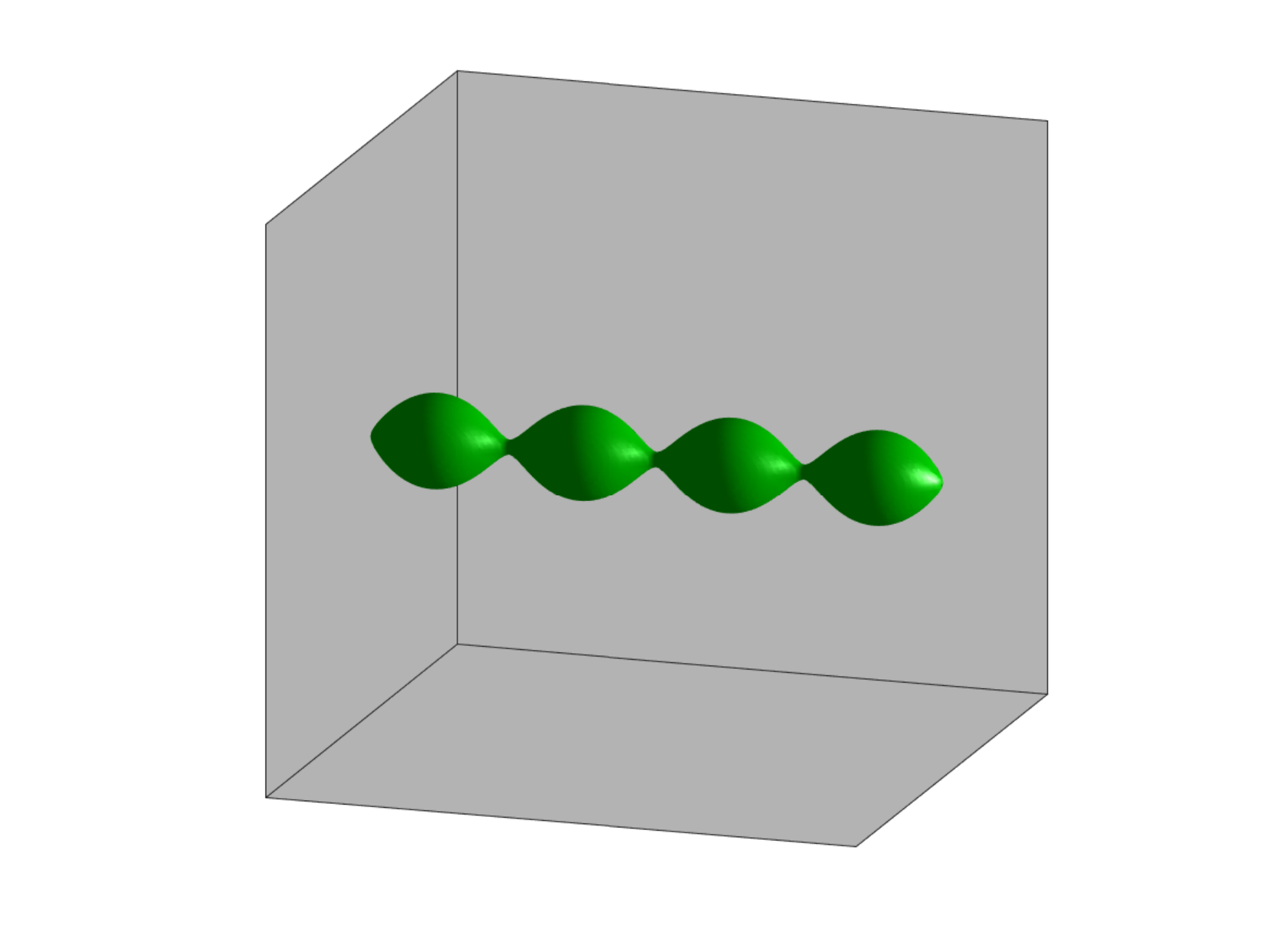}\hspace{-9mm}
	\includegraphics[width=5.3cm]{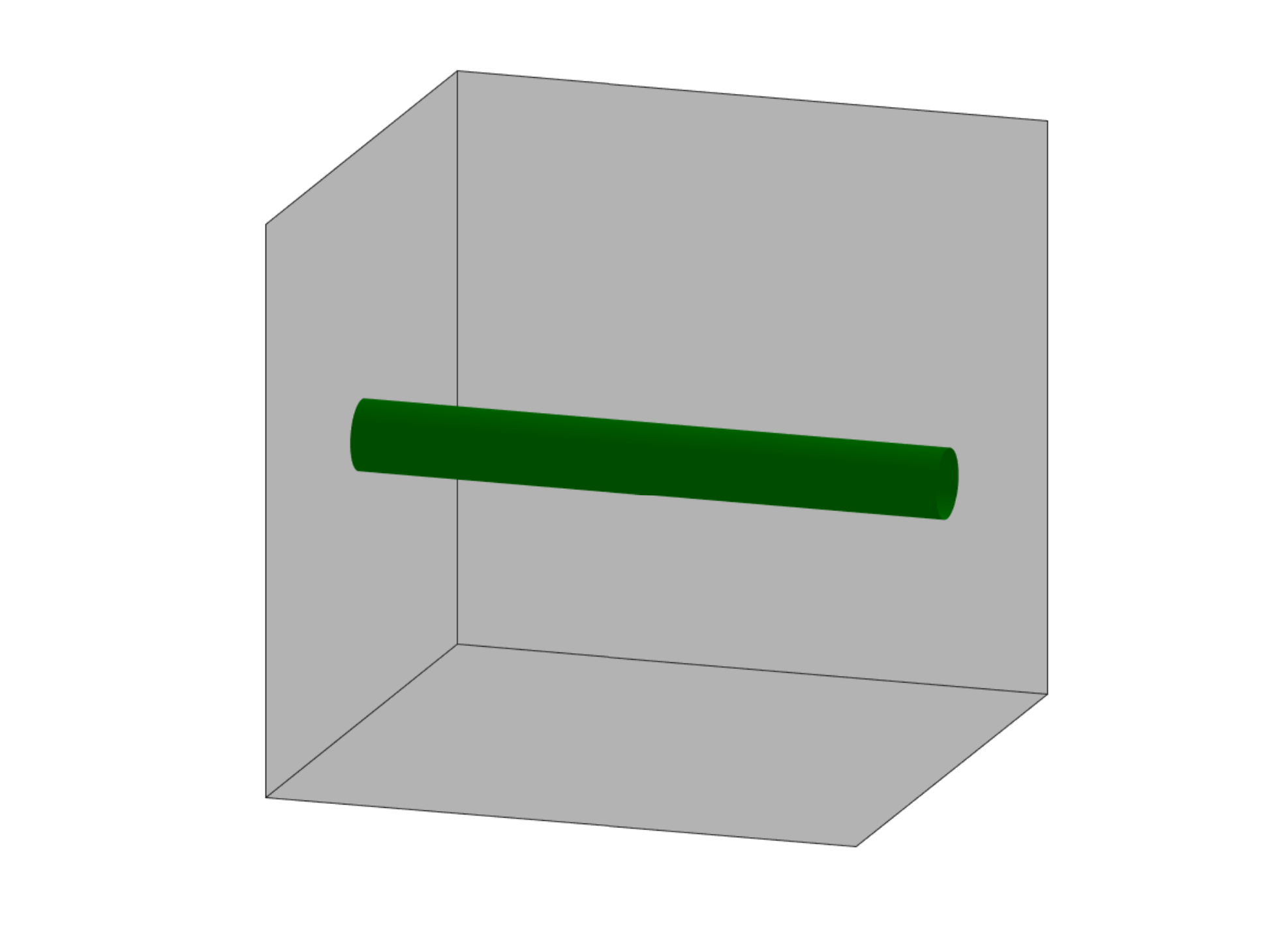}
\caption{Example 4 (\romannumeral1). The evolution of four close-by spherical vesicles. Snapshots of iso-surfaces of $\phi=0$ driven by the PFVM equation at $t=0, 0.02, 1$.}
\label{Fig:PFVM-RESAV1-four-balls}
\end{figure}

\begin{figure}[htbp]
\centering
	\includegraphics[width=5.3cm]{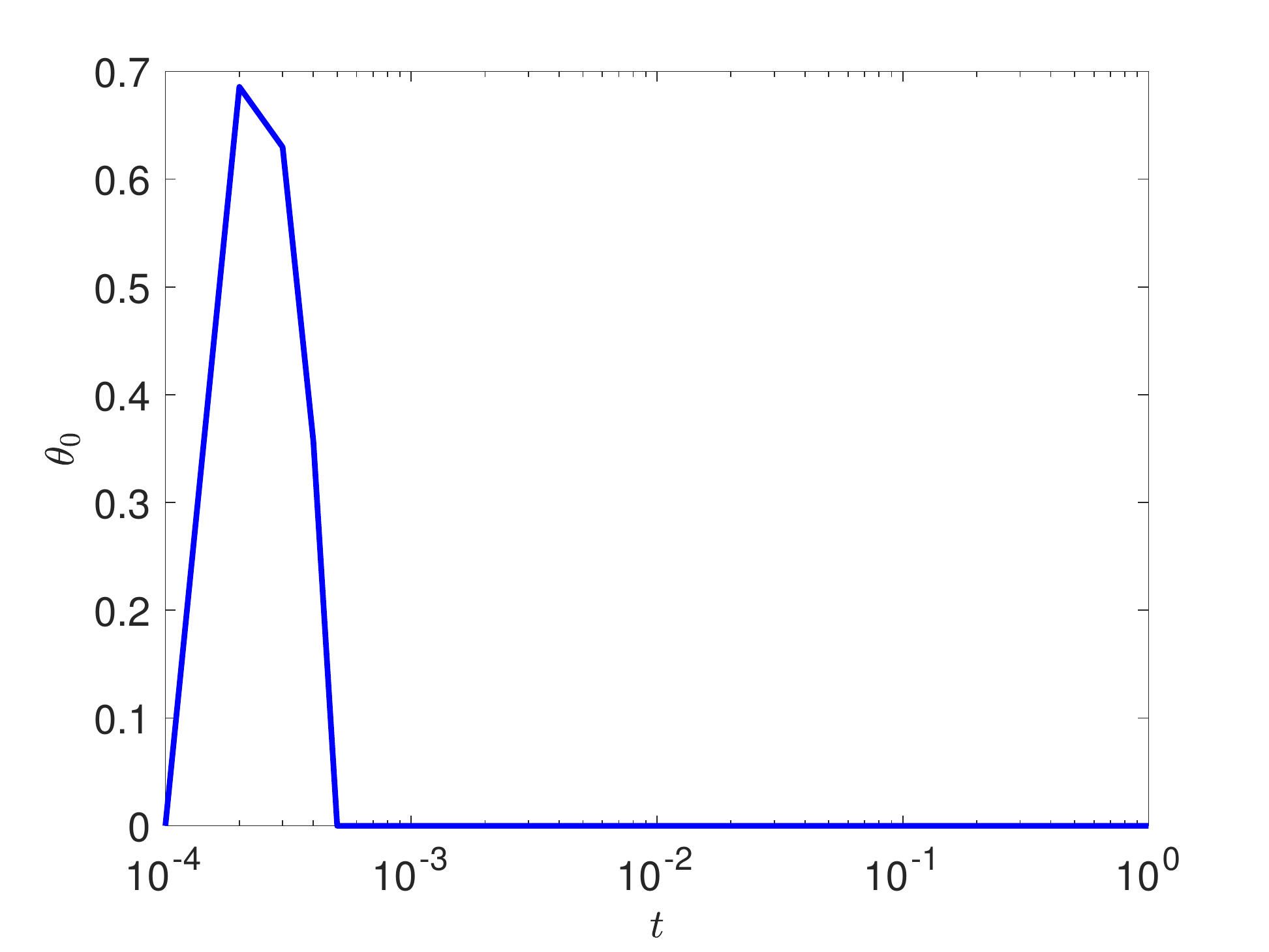}
\caption{Example 4 (\romannumeral1). The evolution of relaxation factor $\theta_{0}^{n+1}$.}
\label{Fig:PFVM-RESAV1-four-balls-zeta}
\end{figure}

(\romannumeral2) 
We simulate the evolution of five close-by spherical vesicles by choosing the initial condition 
\begin{equation}
\phi(x, y, z, 0)=\sum_{j=1}^{5} \tanh \left(\frac{R_{j}-\sqrt{\left(x-x_{j}\right)^{2}+\left(y-y_{j}\right)^{2}+\left(z-z_{j}\right)^{2}}}{\sqrt{2} \epsilon}\right)+4,
\end{equation}
where $R_{j}=\frac{\pi}{6}, z_{j}=0$ for $j=1,2, \ldots, 5$,  $\left(x_{1}, x_{2}, x_{3}, x_{4}, x_{5}\right)=\left(-\frac{\pi}{3}, \frac{\pi}{3}, 0, -\frac{\pi}{3}, \frac{\pi}{3}\right)$, and \\
$\left(y_{1}, y_{2}, y_{3}, y_{4}, y_{5}\right)=\left(-\frac{\pi}{3}, -\frac{\pi}{3}, 0, \frac{\pi}{3}, \frac{\pi}{3}\right)$. 

We represent the evolution process in Fig.\,\ref{Fig:PFVM-RESAV1-five-balls}. 
It can be observed that five spheres connect within a small time interval, gradually form a doughnut shape which is a final state.   

\begin{figure}[htbp]
\centering
\subfigure[profiles of $\phi=0$ at $T=0, 0.02, 0.05$]{
	\includegraphics[width=5.3cm]{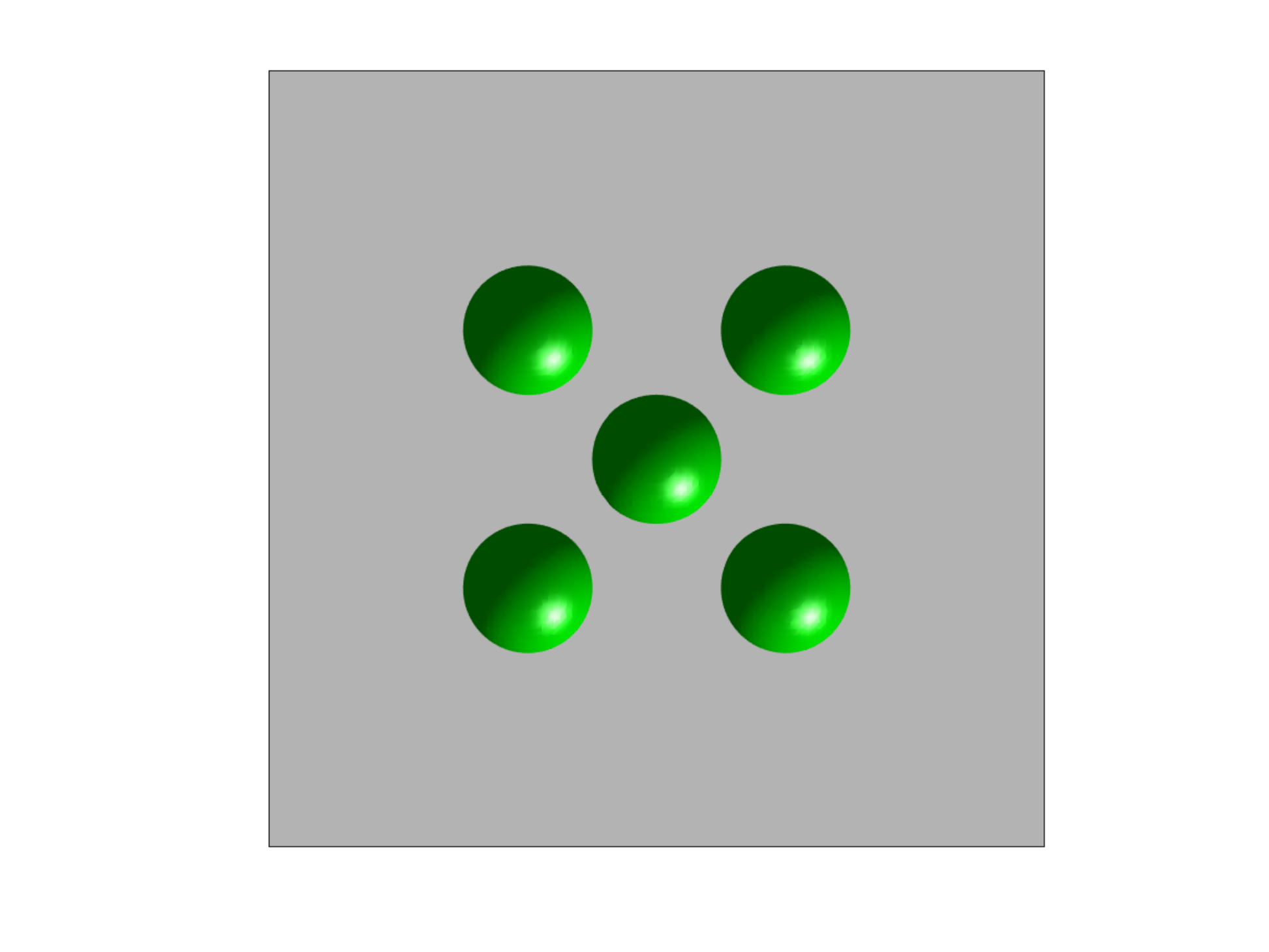}\hspace{-9mm}
	\includegraphics[width=5.3cm]{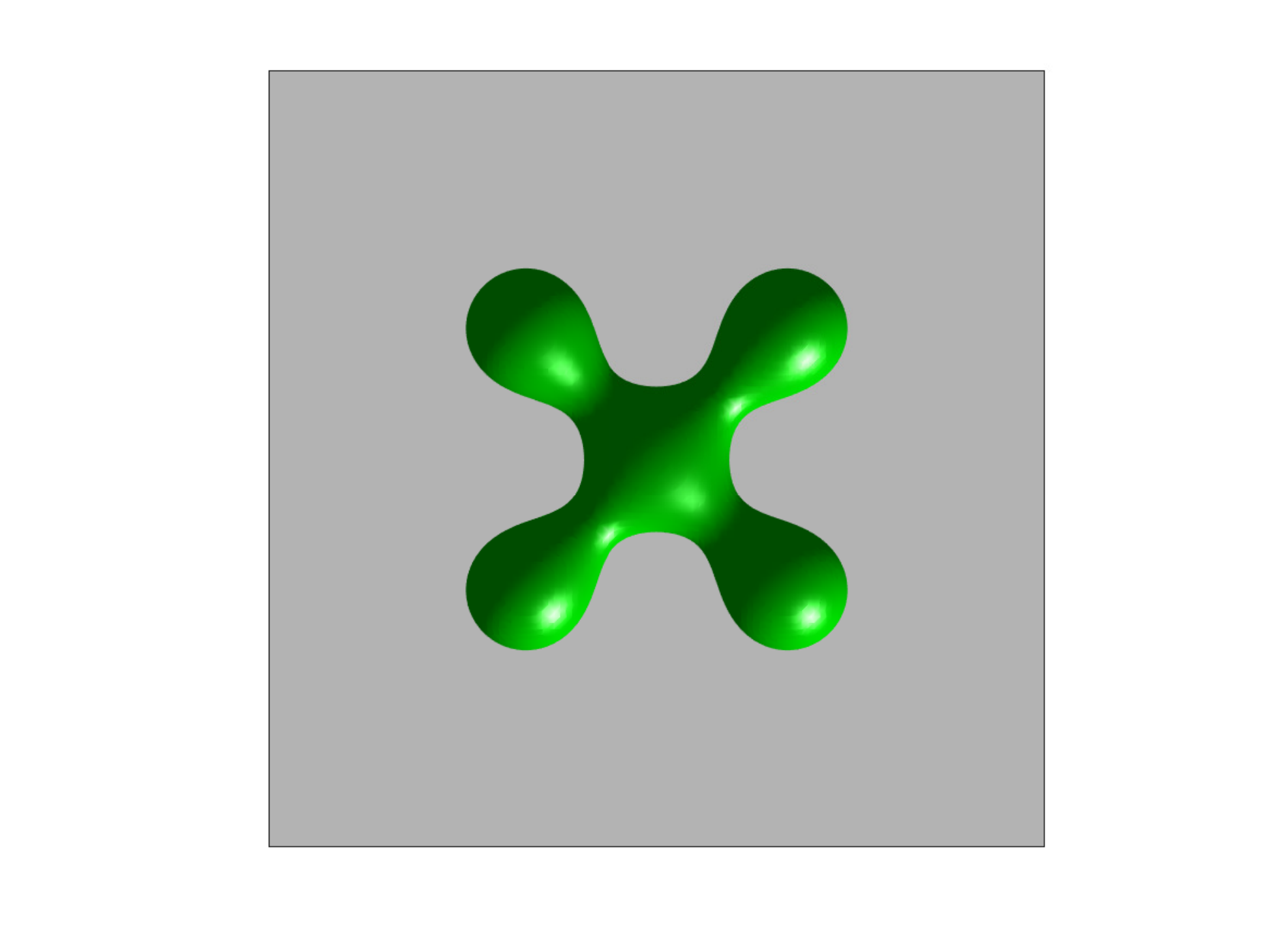}\hspace{-9mm}
	\includegraphics[width=5.3cm]{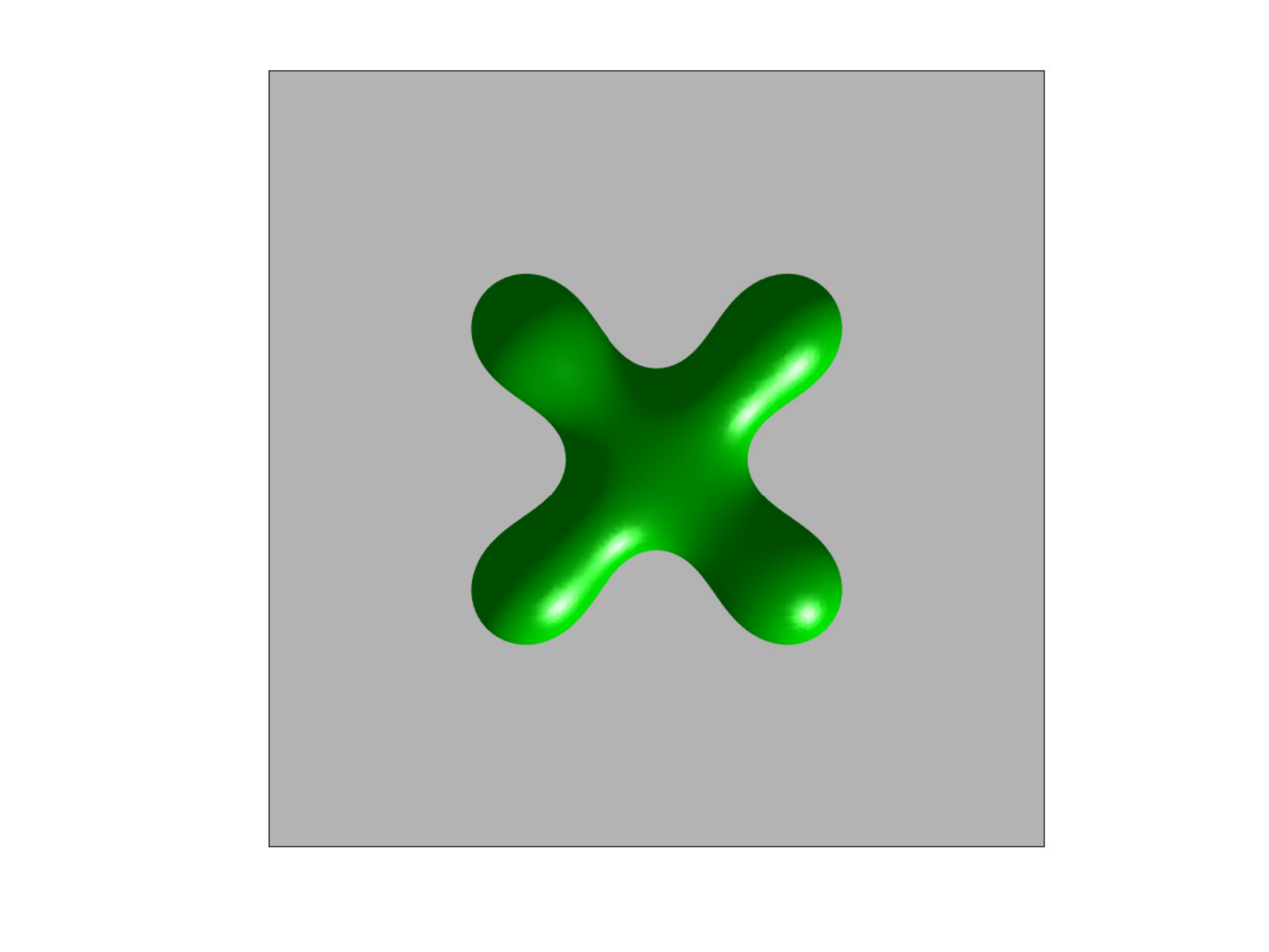}
} 
\subfigure[profiles of $\phi=0$ at $T=0.2, 0.3, 1$]{
	\includegraphics[width=5.3cm]{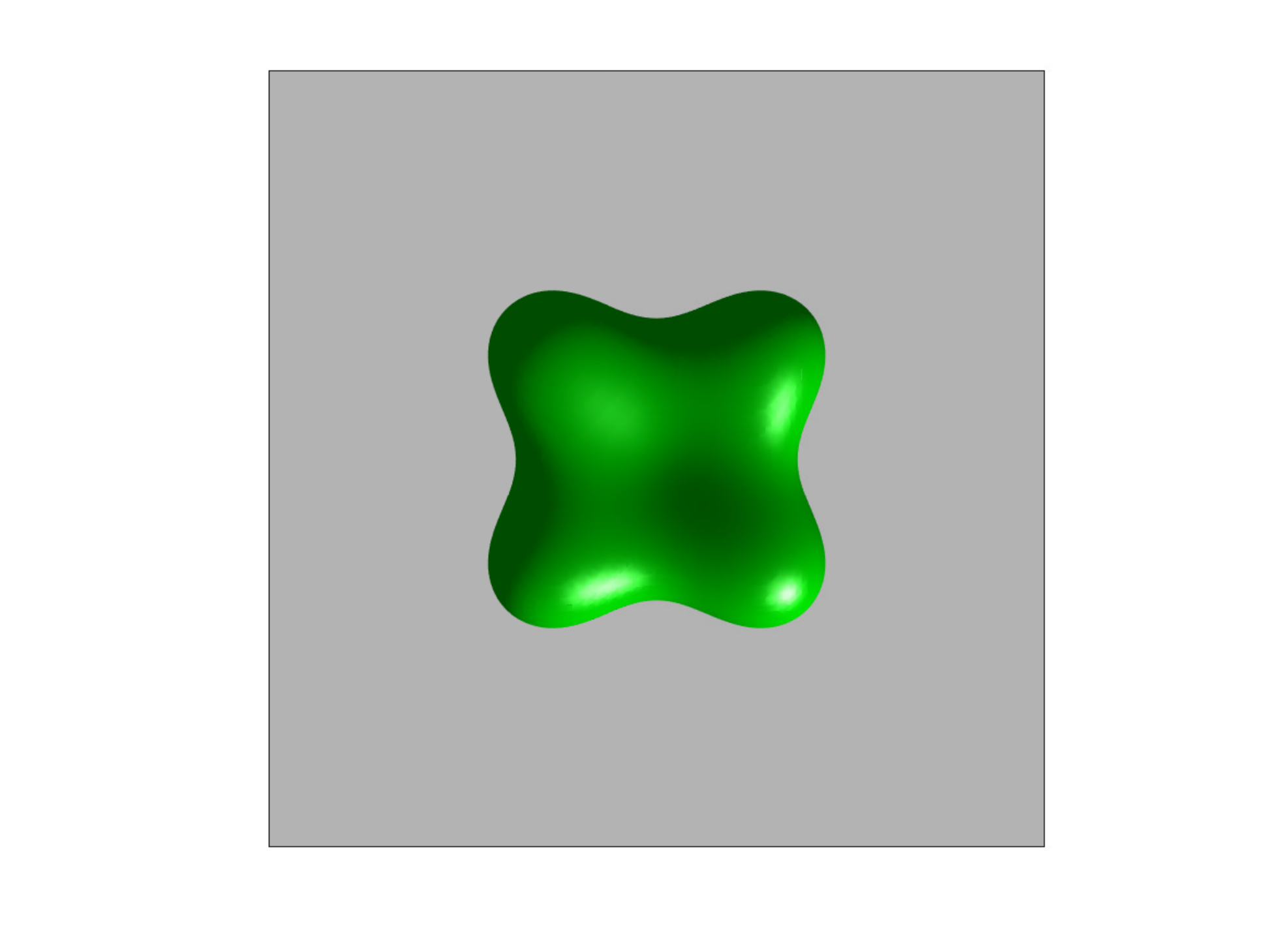}\hspace{-9mm}
	\includegraphics[width=5.3cm]{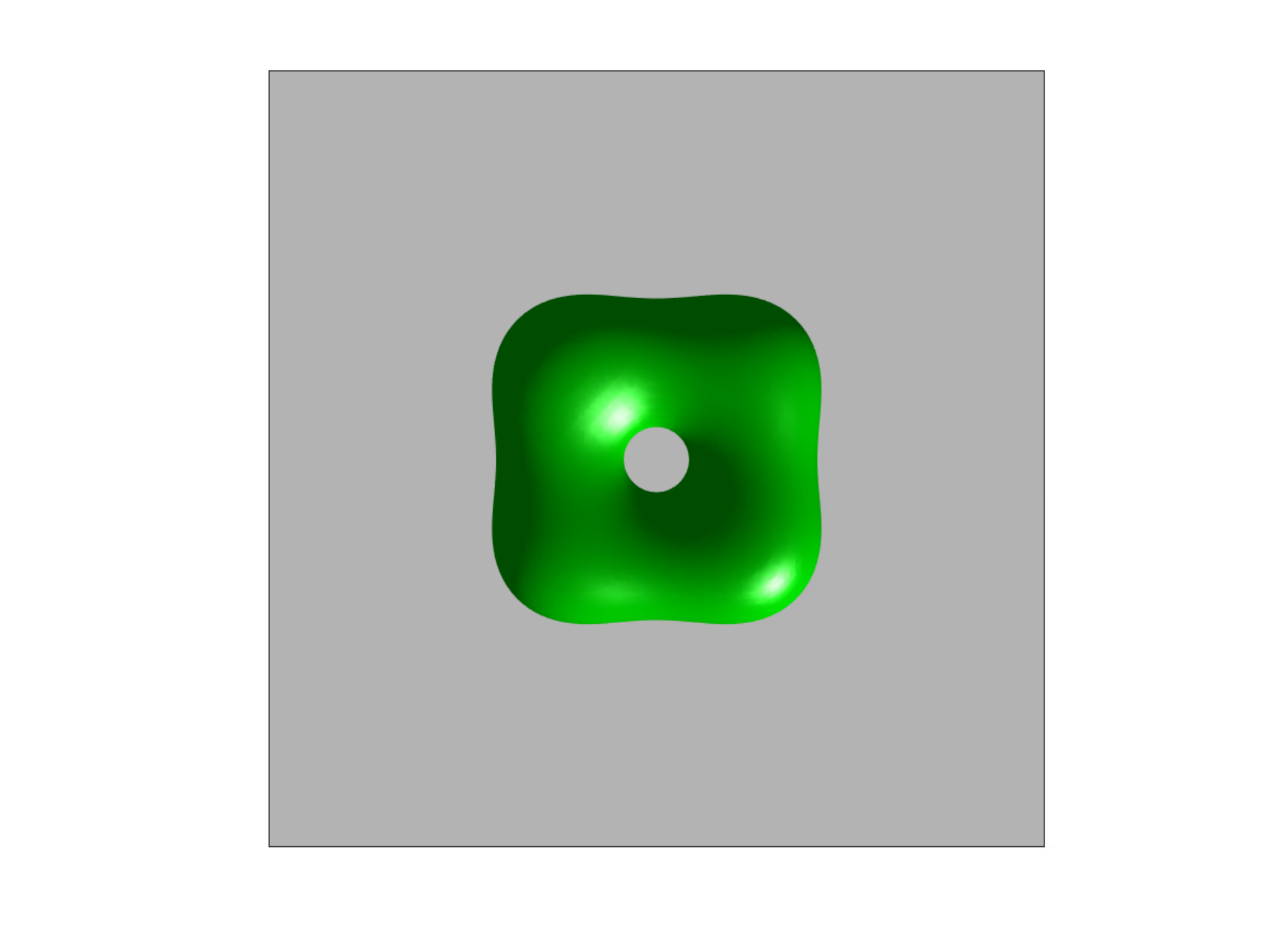}\hspace{-9mm}
	\includegraphics[width=5.3cm]{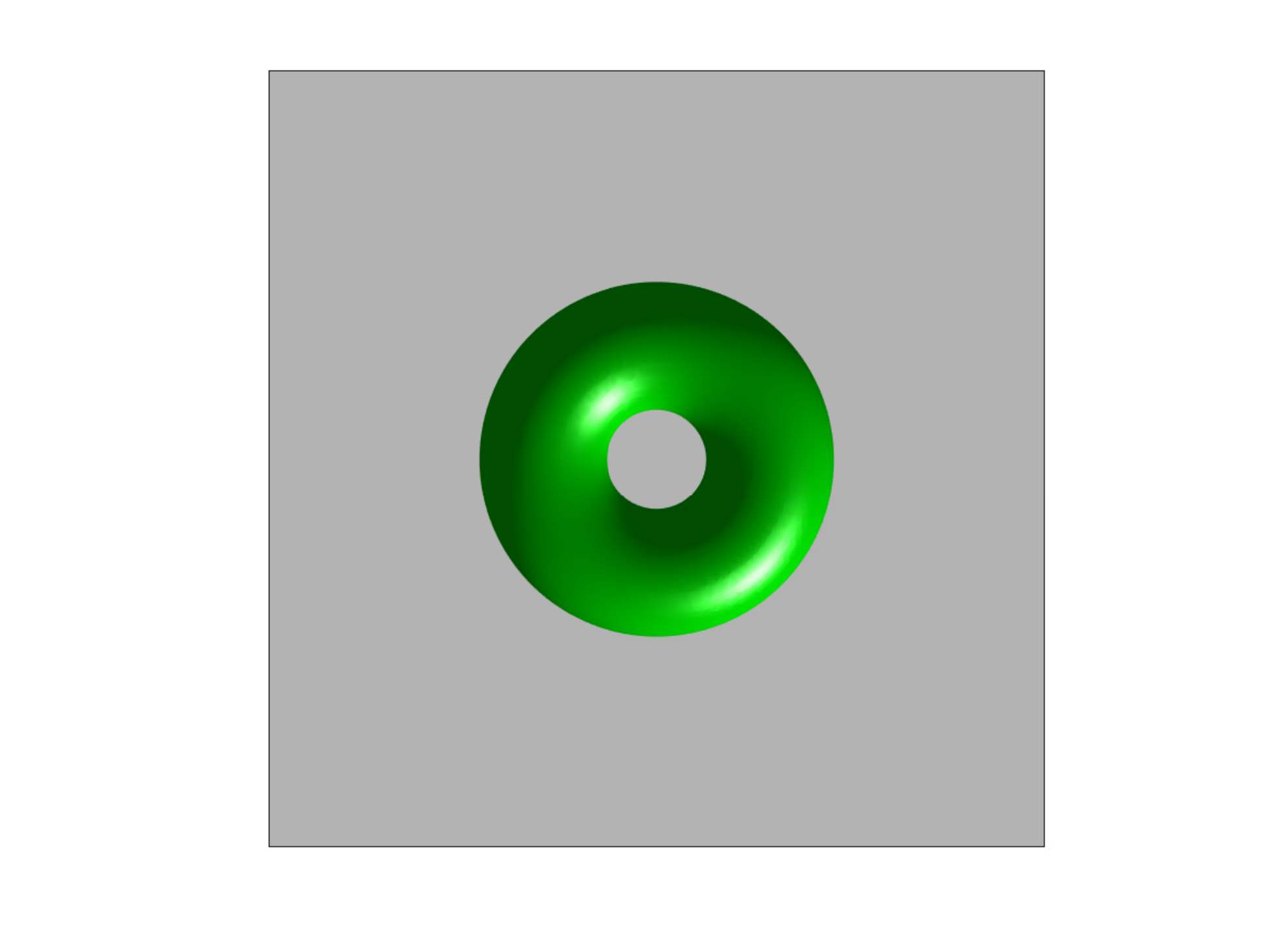}
}
\label{Fig:PFVM-RESAV1-five-balls}
\caption{Example 4 (\romannumeral2). The evolution of five close-by spherical vesicles. Snapshots of iso-surfaces of $\phi=0$ driven by the PFVM equation at $T=0, 0.02, 0.05, 0.2, 0.3, 1$.}
\end{figure}

(\romannumeral3)
Then we consider a more complicated initial condition which is nine close-by spherical vesicles given by 
\begin{equation}
\phi(x, y, z, 0)=\sum_{j=1}^{9} \tanh \left(\frac{R_{j}-\sqrt{\left(x-x_{j}\right)^{2}+\left(y-y_{j}\right)^{2}+\left(z-z_{j}\right)^{2}}}{\sqrt{2} \epsilon}\right)+8,
\end{equation}
where $R_{j}=\frac{\pi}{6}, z_{j}=0$ for $j=1,2, \ldots, 9$,  $\left(x_{1}, x_{2}, x_{3}, x_{4}, x_{5}, x_{6}, x_{7}, x_{8}, x_{9}\right)=\left(-\frac{\pi}{2}, 0, \frac{\pi}{2},  -\frac{\pi}{2}, 0, \frac{\pi}{2}, -\frac{\pi}{2}, 0, \frac{\pi}{2}\right)$, and
$\left(y_{1}, y_{2}, y_{3}, y_{4}, y_{5}, y_{6}, y_{7}, y_{8}, y_{9}\right)=\left(-\frac{\pi}{2}, -\frac{\pi}{2}, -\frac{\pi}{2}, 0, 0, 0, \frac{\pi}{2}, \frac{\pi}{2}, \frac{\pi}{2}\right)$. 

The evolutions of nine close-by spherical vesicles are demonstrated in Fig.\,\ref{Fig:PFVM-RESAV1-nine-balls}, which represents that the initially nine spheres gradually connect with each other and finally form a big vesicle. 

\begin{figure}[htbp]
\centering
\subfigure[profiles of $\phi=0$ at $T=0, 0.02, 0.05$]{
	\includegraphics[width=5.3cm]{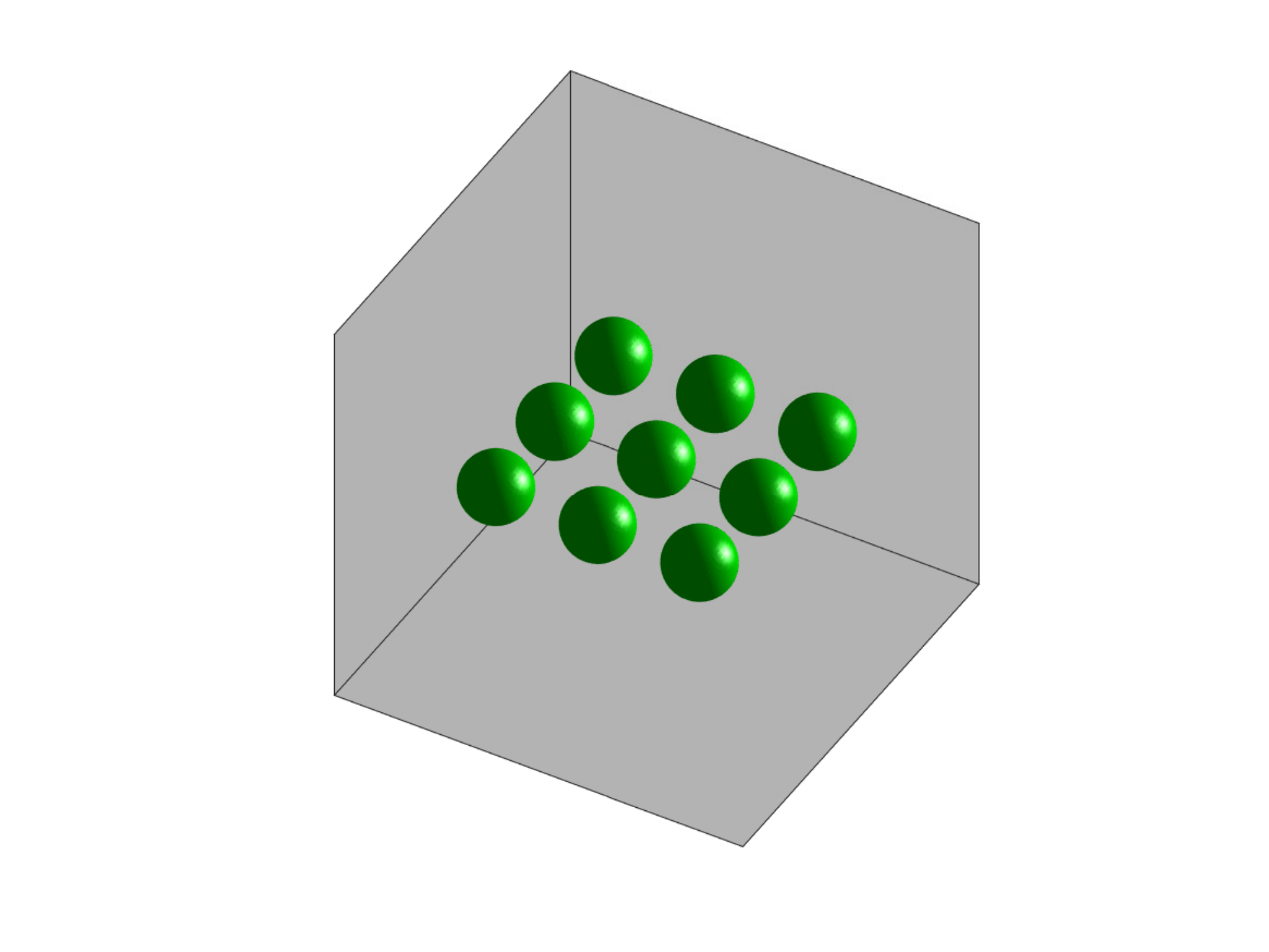}\hspace{-9mm}
	\includegraphics[width=5.3cm]{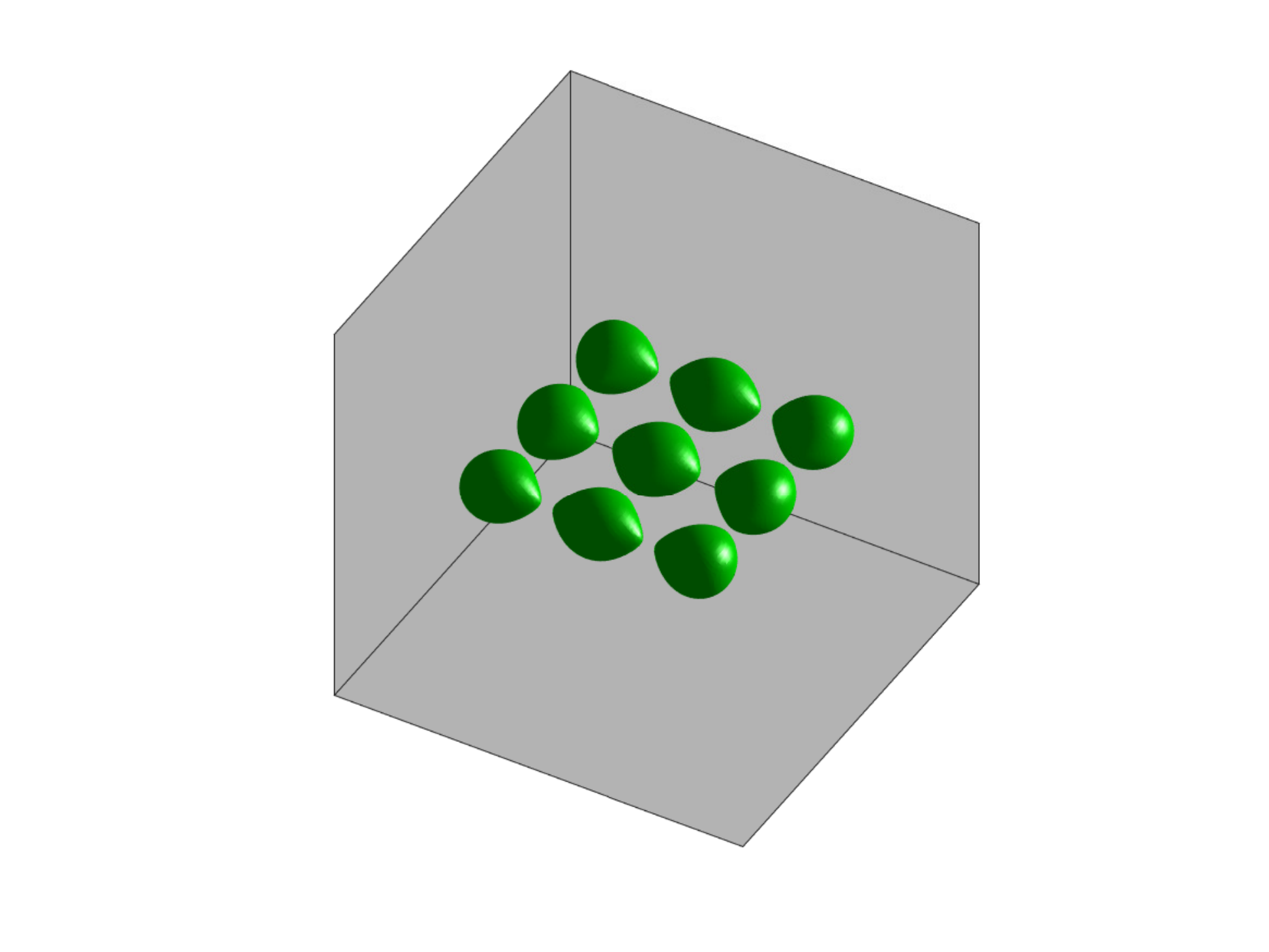}\hspace{-9mm}
	\includegraphics[width=5.3cm]{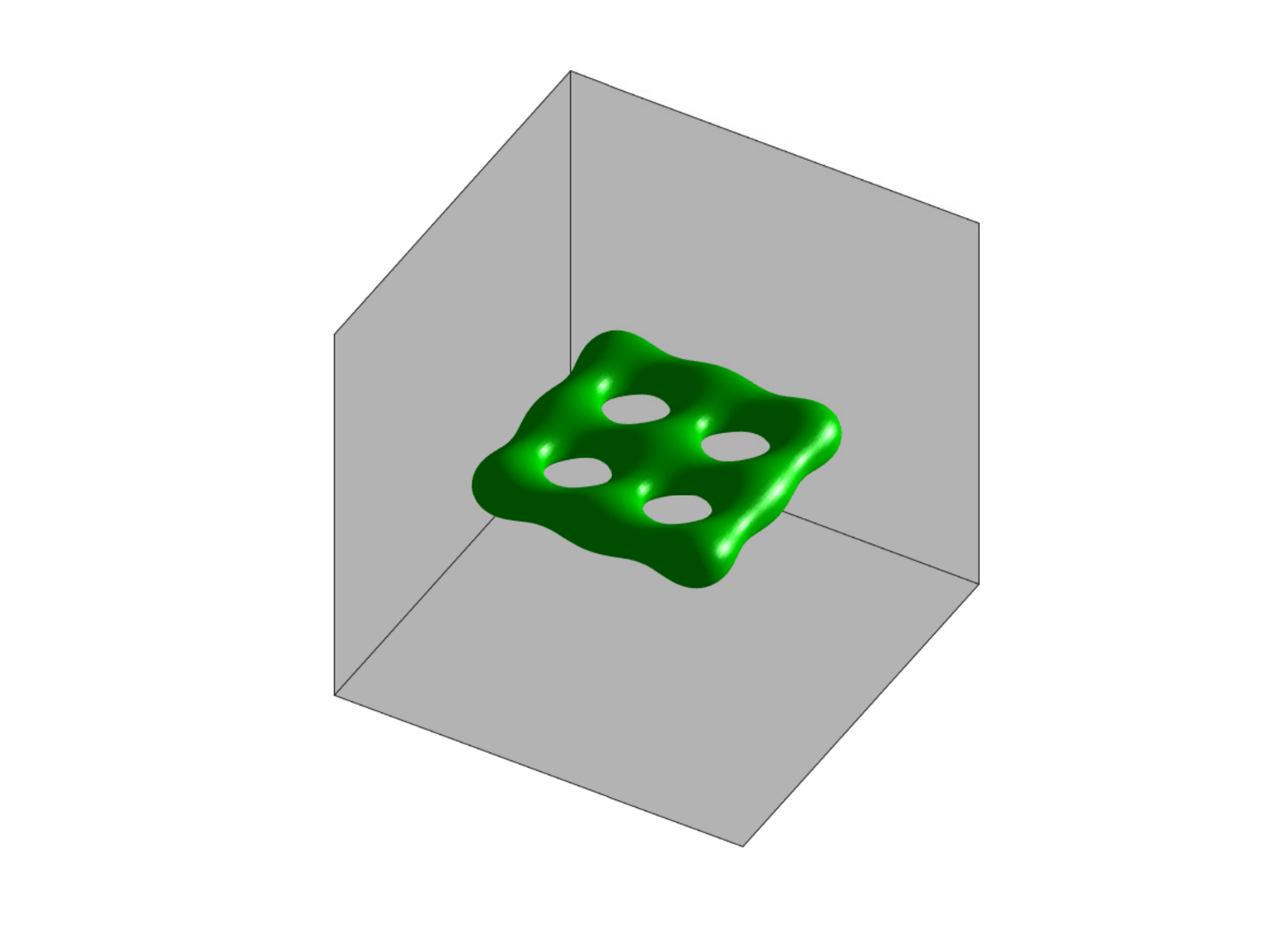}
} 
\subfigure[profiles of $\phi=0$ at $T=0.1, 0.2, 1$]{
	\includegraphics[width=5.3cm]{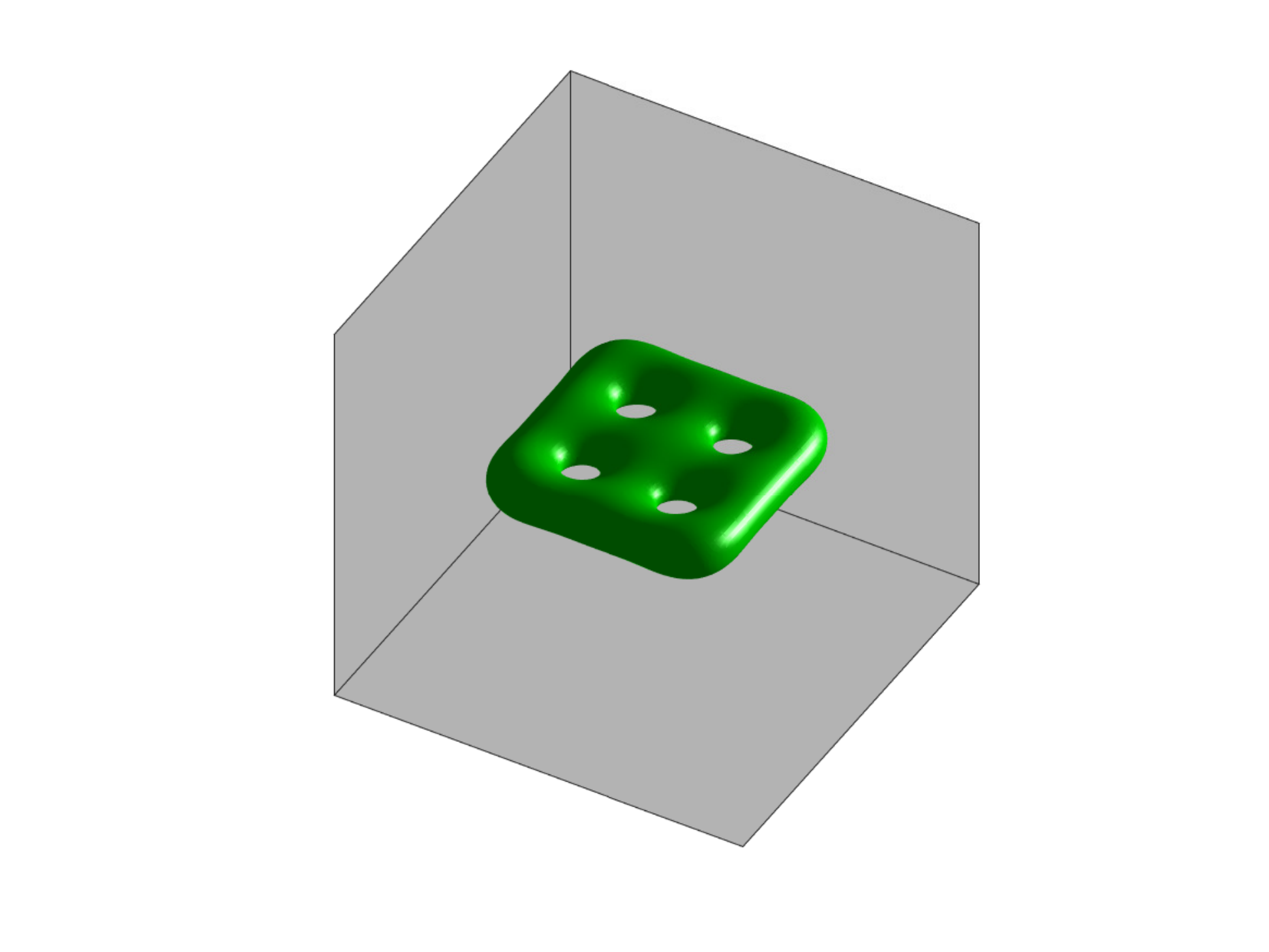}\hspace{-9mm}
	\includegraphics[width=5.3cm]{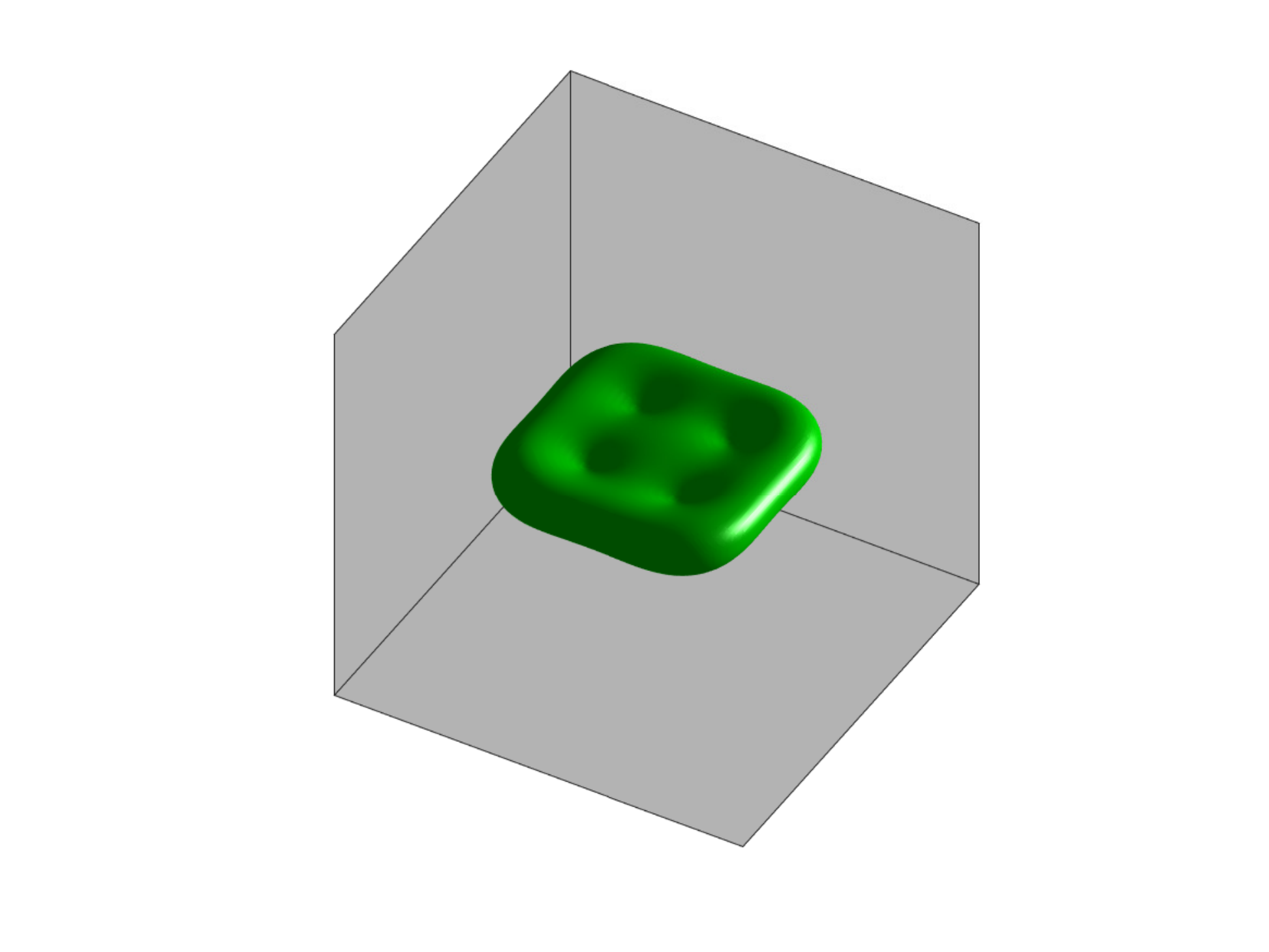}\hspace{-9mm}
	\includegraphics[width=5.3cm]{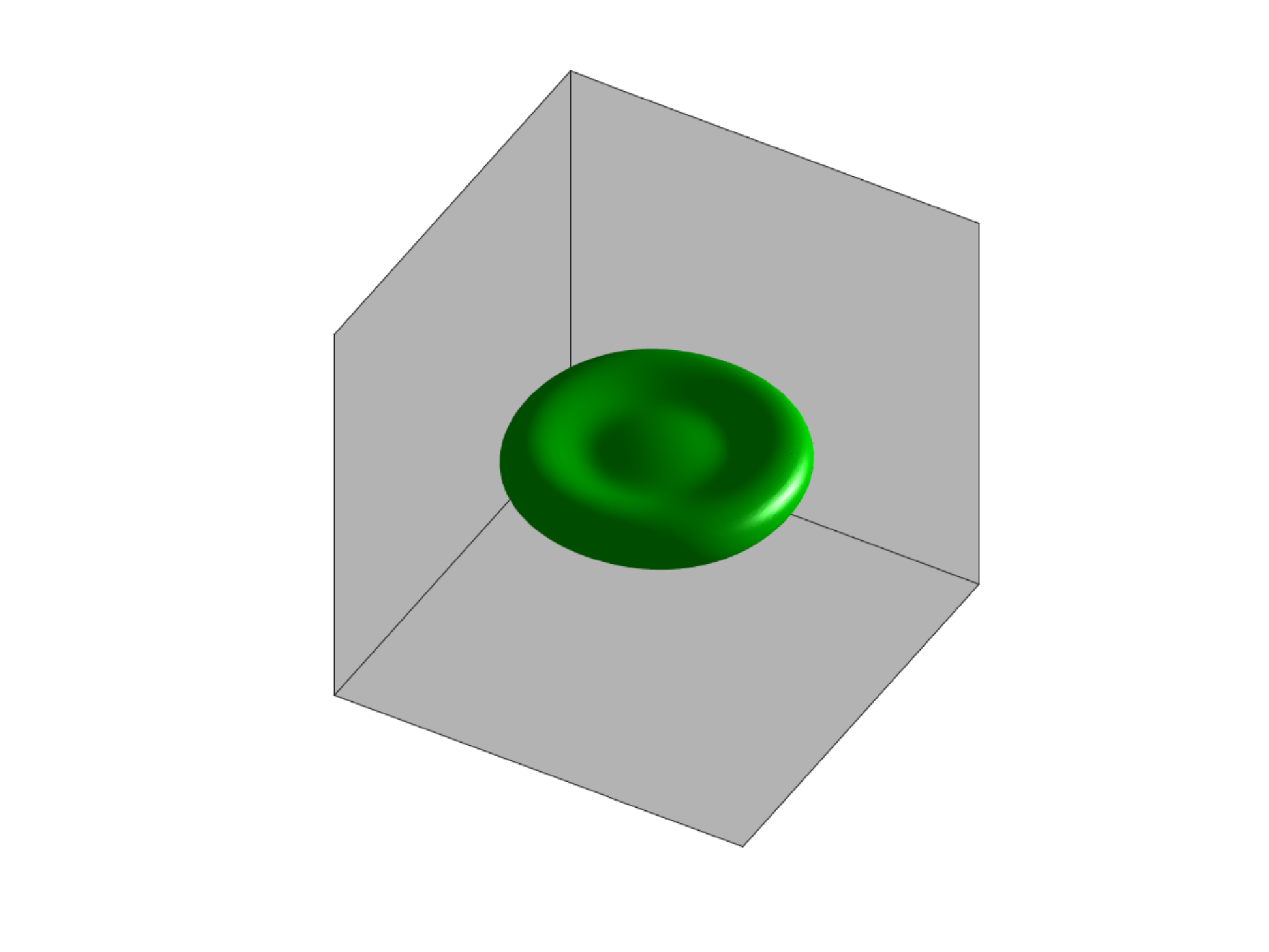}
}
\label{Fig:PFVM-RESAV1-nine-balls}
\caption{Example 4 (\romannumeral3). The evolution of nine close-by spherical vesicles. Snapshots of iso-surfaces of $\phi=0$ driven by the PFVM equation at $T=0, 0.02, 0.05, 0.1, 0.2, 1$.}
\end{figure}
 
\textbf{Example 5.} In this numerical simulation, we test the Navier-Stokes equation. 

(\romannumeral1) 
We start with the accuracy test. The right hand side is computed according to the following 
analytic solution
\begin{equation}
\left\{\begin{array}{l}
p(x, y, t)=\exp (t)\sin (\pi y), \\ 
u_{1}(x, y, t)=\exp (t) \sin ^{2}(\pi x) \sin (2 \pi y), \\ 
u_{2}(x, y, t)=-\exp (t) \sin (2 \pi x) \sin ^{2}(\pi y),
\end{array}\right.
\end{equation}
the computational domain is $\Omega=(-1, 1)^{2}$, other parameters are chosen as $\nu=0.1, T=1$. 
We use Legendre spectral method to discretize space and $N^2=64^2$. 
The numerical results for BDF$1$ and BDF$2$ of Schemes \uppercase\expandafter{\romannumeral1} and  \uppercase\expandafter{\romannumeral2} are presented in Tables \ref{table:NS-scheme1-1st}-\ref{table:NS-scheme2-2nd} respectively.
It can be seen that the errors of the solution will be greatly reduced by using the relaxation factor. 

(\romannumeral2)  Next we simulate double shear layer problem. 
We consider the Navier-Stokes equation with periodic boundary condition, and the initial condition provided by 
\begin{equation}
	\begin{array}{l}u_{1}(x, y, 0)=\left\{\begin{array}{l}\tanh (\sigma(y-0.25)), y \leq 0.5, \\ 
	\tanh (\sigma(0.75-y)), y>0.5,
	\end{array}\right. \\ 
	u_{2}(x, y, 0)=\epsilon \sin (2 \pi x),
\end{array}
\end{equation}
where $\sigma$  is the parameter of the shear layer width and $\epsilon$ is the size of the perturbation. 
We choose $\sigma=30, \epsilon=0.05$ and computational domain $\Omega=(0, 2)^2$ in the next simulations. 
We use  Fourier modes $N^2=128^{2}$  and set $\delta t = 6.7e-4$ to test the Navier-Stokes equation with $\nu=5e-5$. 
The vorticity contours of velocity field $\mathbf{u}$ at $T = 1.2$ using BDF$k$ ($k=1, 2, 3, 4$) schemes are presented in Fig.\,\ref{Fig:NS-double-shear-RESAV2-periodic-scheme-comparison}. 
It can be observed that  the BDF$3$ and BDF$4$ schemes give correct solutions, while the BDF$1$ scheme leads to a totally wrong result and the result of the BDF$2$ scheme is also inaccurate. 
The numerical phenomenon demonstrates the  superiorities of high-order schemes. 
The evolution of contours of vorticity with $\nu  = 1e-4$, and $\delta t = 6e- 4$ obtained by BDF$2$ scheme is shown in Fig.\,\ref{Fig:NS-double-shear-RESAV2-periodic-evolution}. 
We can observe from the results that the vortex increases gradually.

\linespread{1.2}
\begin{table}[!h] 
	\centering
	\caption{Example 5 (\romannumeral1). Convergence test for Navier-Stokes equation using Scheme \uppercase\expandafter{\romannumeral1}/BDF$1$ with and without relaxation.}
	 \label{table:NS-scheme1-1st}
	\begin{tabular}{||c||cccc||cccc||}
		\hline
	& & ESAV-2 & & & & R-ESAV-2 & & \\
		\hline
	$\delta t$ & $\|e_{\mathbf{u}}\|_{L^{2}}$ & Rate & $\|e_{p}\|_{L^{2}}$ & Rate & $\|e_{\mathbf{u}}\|_{L^{2}}$ & Rate & $\|e_{p}\|_{L^{2}}$ & Rate\\
		\hline
	2.50E-2  & 4.29E-02 &  --    & 1.40       & --    & 1.08E-02 &  --    & 2.62E-01   & -- \\
	1.25E-2  & 1.96E-02 &  1.13  & 6.46E-01   & 1.11  & 4.85E-03 &  1.15  & 1.15E-01   & 1.19\\
	6.25E-3  & 9.40E-03 &  1.06  & 3.10E-01   & 1.06  & 2.32E-03 &  1.06  & 5.35E-02   & 1.10\\
	3.13E-3  & 4.61E-03 &  1.03  & 1.52E-01   & 1.03  & 1.14E-03 &  1.03  & 2.58E-02   & 1.05\\
	1.56E-3  & 2.28E-03 &  1.01  & 7.52E-02   & 1.01  & 5.63E-04 &  1.01  & 1.26E-02   & 1.03\\
	\hline
	\end{tabular}
\end{table}

\linespread{1.2}
\begin{table}[!h] 
	\centering
	\caption{Example 5 (\romannumeral1). Convergence test for Navier-Stokes equation using Scheme \uppercase\expandafter{\romannumeral1}/BDF$2$ with and without relaxation.}
	 \label{table:NS-scheme1-2nd}
	\begin{tabular}{||c||cccc||cccc||}
		\hline
	& & ESAV-2 & & & & R-ESAV-2 & & \\
		\hline
	$\delta t$ & $\|e_{\mathbf{u}}\|_{L^{2}}$ & Rate & $\|e_{p}\|_{L^{2}}$ & Rate & $\|e_{\mathbf{u}}\|_{L^{2}}$ & Rate & $\|e_{p}\|_{L^{2}}$ & Rate\\
		\hline
	2.50E-2  & 1.94E-03 &  --    & 4.47E-02   & --    & 1.54E-03 &  --    & 9.25E-03   & -- \\
	1.25E-2  & 4.64E-04 &  2.06  & 9.96E-03   & 2.17  & 3.88E-04 &  1.99  & 2.77E-03   & 1.74\\
	6.25E-3  & 1.14E-04 &  2.02  & 2.40E-03   & 2.05  & 9.73E-05 &  1.99  & 8.79E-04   & 1.66\\
	3.13E-3  & 2.84E-05 &  2.00  & 6.19E-04   & 1.96  & 2.46E-05 &  1.99  & 3.03E-04   & 1.54\\
	1.56E-3  & 7.28E-06 &  1.97  & 1.70E-04   & 1.86  & 6.37E-06 &  1.95  & 1.07E-04   & 1.50\\
	\hline
	\end{tabular}
\end{table}

\linespread{1.2}
\begin{table}[!h] 
	\centering
	\caption{Example 5 (\romannumeral1). Convergence test for Navier-Stokes equation using Scheme \uppercase\expandafter{\romannumeral2}/BDF$1$ with and without relaxation.}
	 \label{table:NS-scheme2-1st}
	\begin{tabular}{||c||cccc||cccc||}
		\hline
	& & ESAV-2 & & & & R-ESAV-2 & & \\
		\hline
	$\delta t$ & $\|e_{\mathbf{u}}\|_{L^{2}}$ & Rate & $\|e_{p}\|_{L^{2}}$ & Rate & $\|e_{\mathbf{u}}\|_{L^{2}}$ & Rate & $\|e_{p}\|_{L^{2}}$ & Rate\\
		\hline
	2.50E-2  & 4.86E-01 &  --    & 9.34E-01   & --    & 1.99E-01 &  --    & 2.89E-01   & -- \\
	1.25E-2  & 3.12E-01 &  0.64  & 6.03E-01   & 0.63  & 9.51E-02 &  1.07  & 1.34E-01   & 1.11\\
	6.25E-3  & 1.86E-01 &  0.75  & 3.58E-01   & 0.75  & 4.60E-02 &  1.05  & 6.37E-02   & 1.07\\
	3.13E-3  & 1.03E-01 &  0.85  & 1.99E-01   & 0.85  & 2.26E-02 &  1.03  & 3.09E-02   & 1.04\\
	1.56E-3  & 5.47E-02 &  0.91  & 1.05E-01   & 0.92  & 1.15E-02 &  1.02  & 1.52E-02   & 1.02\\
	\hline
	\end{tabular}
\end{table}

\linespread{1.2}
\begin{table}[!h] 
	\centering
	\caption{Example 5 (\romannumeral1). Convergence test for Navier-Stokes equation using Scheme \uppercase\expandafter{\romannumeral2}/BDF$2$ with and without relaxation.}
	 \label{table:NS-scheme2-2nd}
	\begin{tabular}{||c||cccc||cccc||}
		\hline
	& & ESAV-2 & & & & R-ESAV-2 & & \\
		\hline
	$\delta t$ & $\|e_{\mathbf{u}}\|_{L^{2}}$ & Rate & $\|e_{p}\|_{L^{2}}$ & Rate & $\|e_{\mathbf{u}}\|_{L^{2}}$ & Rate & $\|e_{p}\|_{L^{2}}$ & Rate\\
		\hline
	2.50E-2  & 3.83E-02 &  --    & 6.23E-02   & --    & 8.25E-03 &  --    & 1.59E-02   & -- \\
	1.25E-2  & 7.60E-03 &  2.33  & 1.30E-02   & 2.26  & 2.05E-03 &  2.01  & 3.96E-03   & 2.00\\
	6.25E-3  & 1.73E-03 &  2.14  & 2.98E-03   & 2.12  & 5.15E-04 &  2.00  & 9.93E-04   & 1.99\\
	3.13E-3  & 4.13E-04 &  2.06  & 7.17E-04   & 2.06  & 1.29E-04 &  2.00  & 2.49E-04   & 2.00\\
	1.56E-3  & 1.01E-04 &  2.03  & 1.76E-04   & 2.03  & 2.23E-05 &  2.00  & 6.24E-05   & 2.00\\
	\hline
	\end{tabular}
\end{table}

\begin{figure}[!h]
	\centering
	\includegraphics[width=5.3cm]{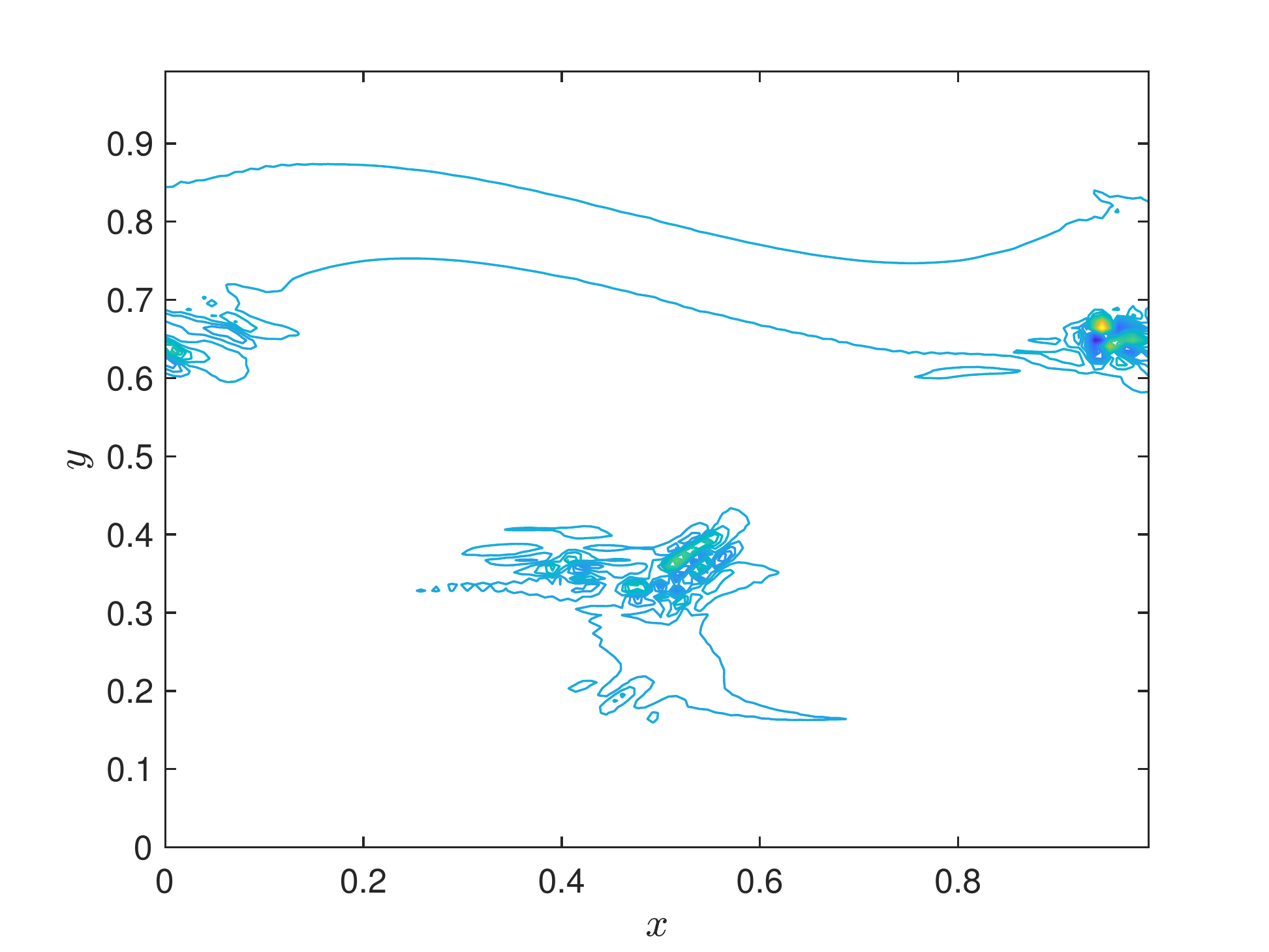}
	\includegraphics[width=5.3cm]{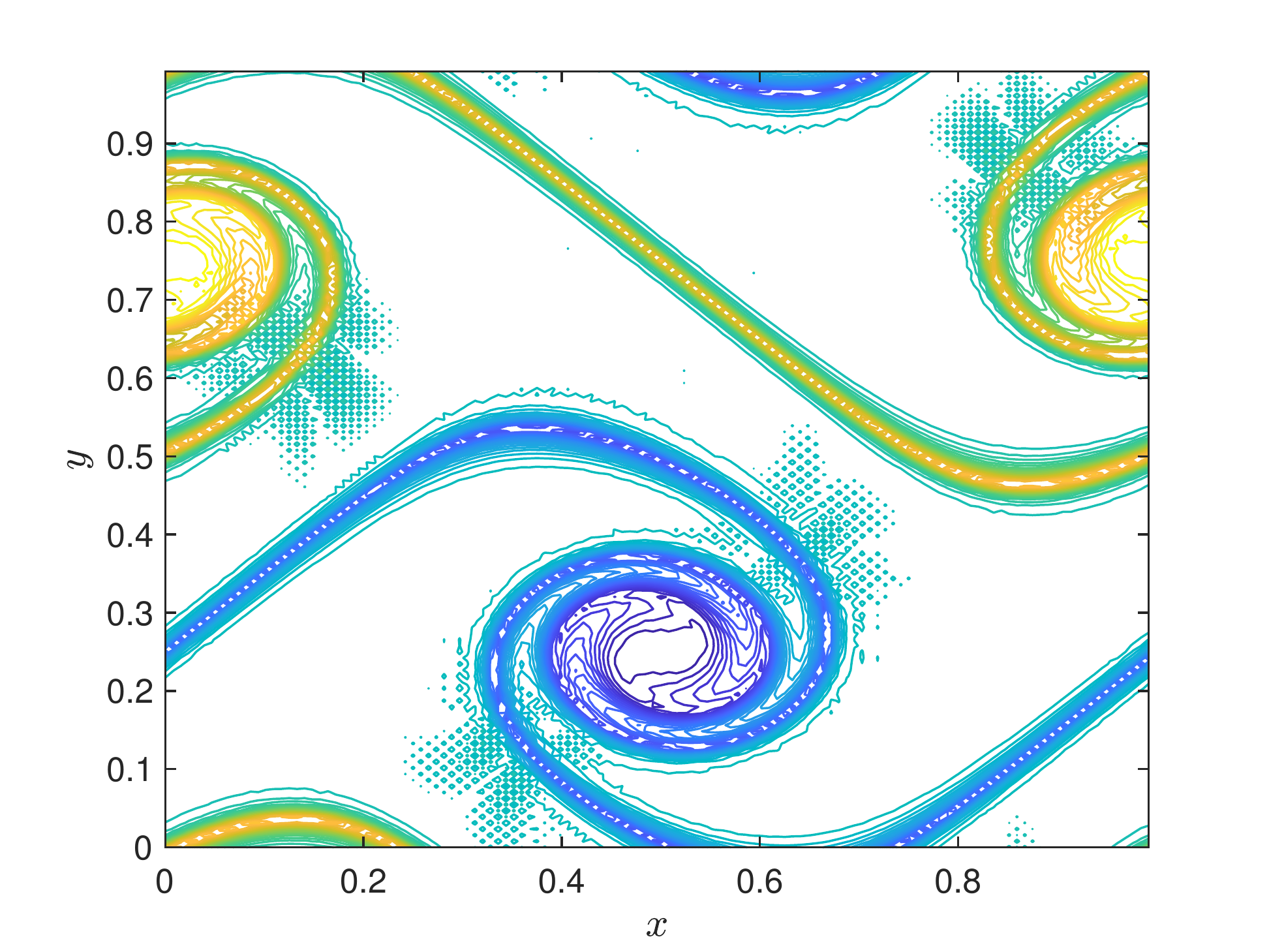}\\
	\includegraphics[width=5.3cm]{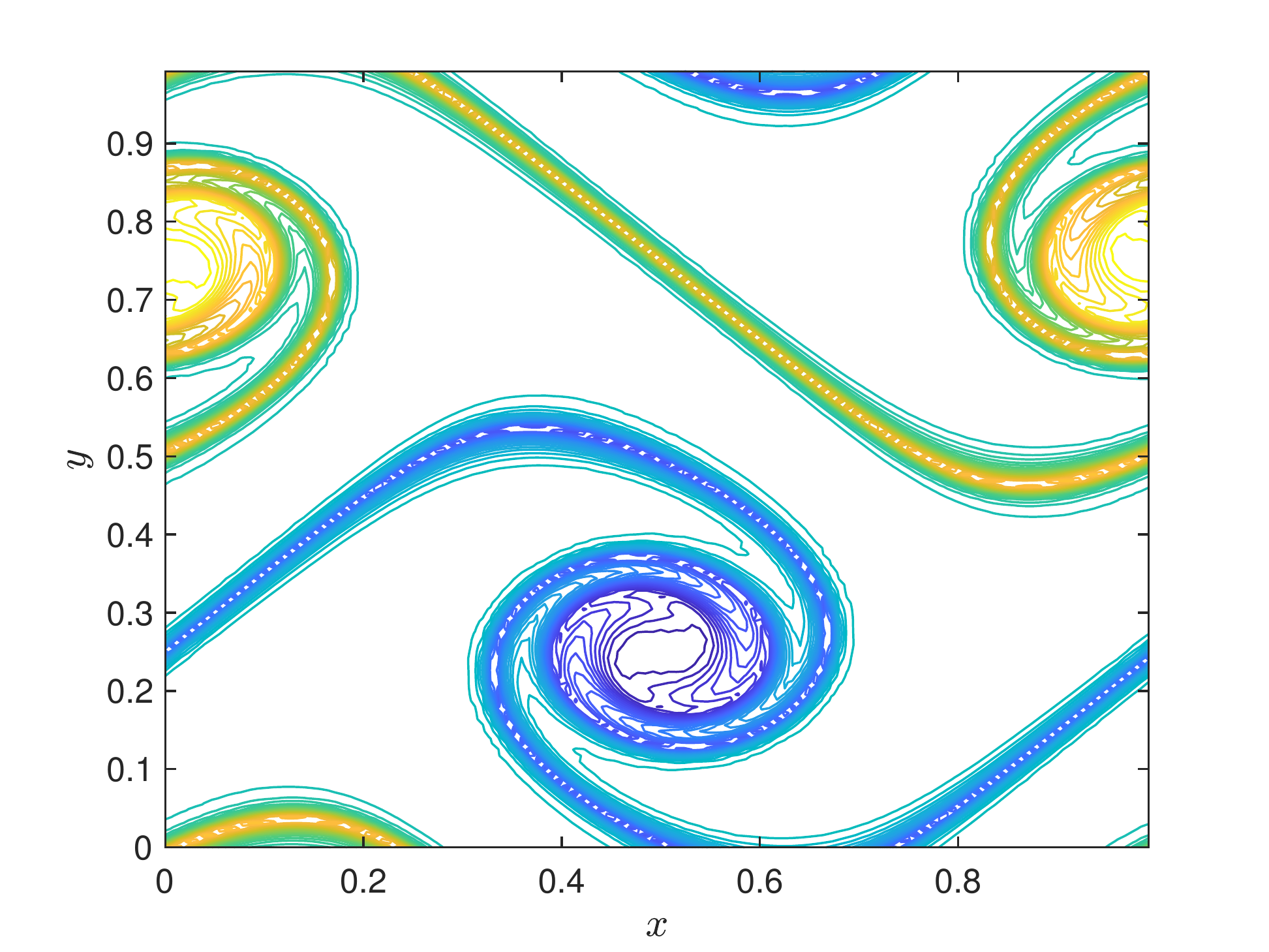}
	\includegraphics[width=5.3cm]{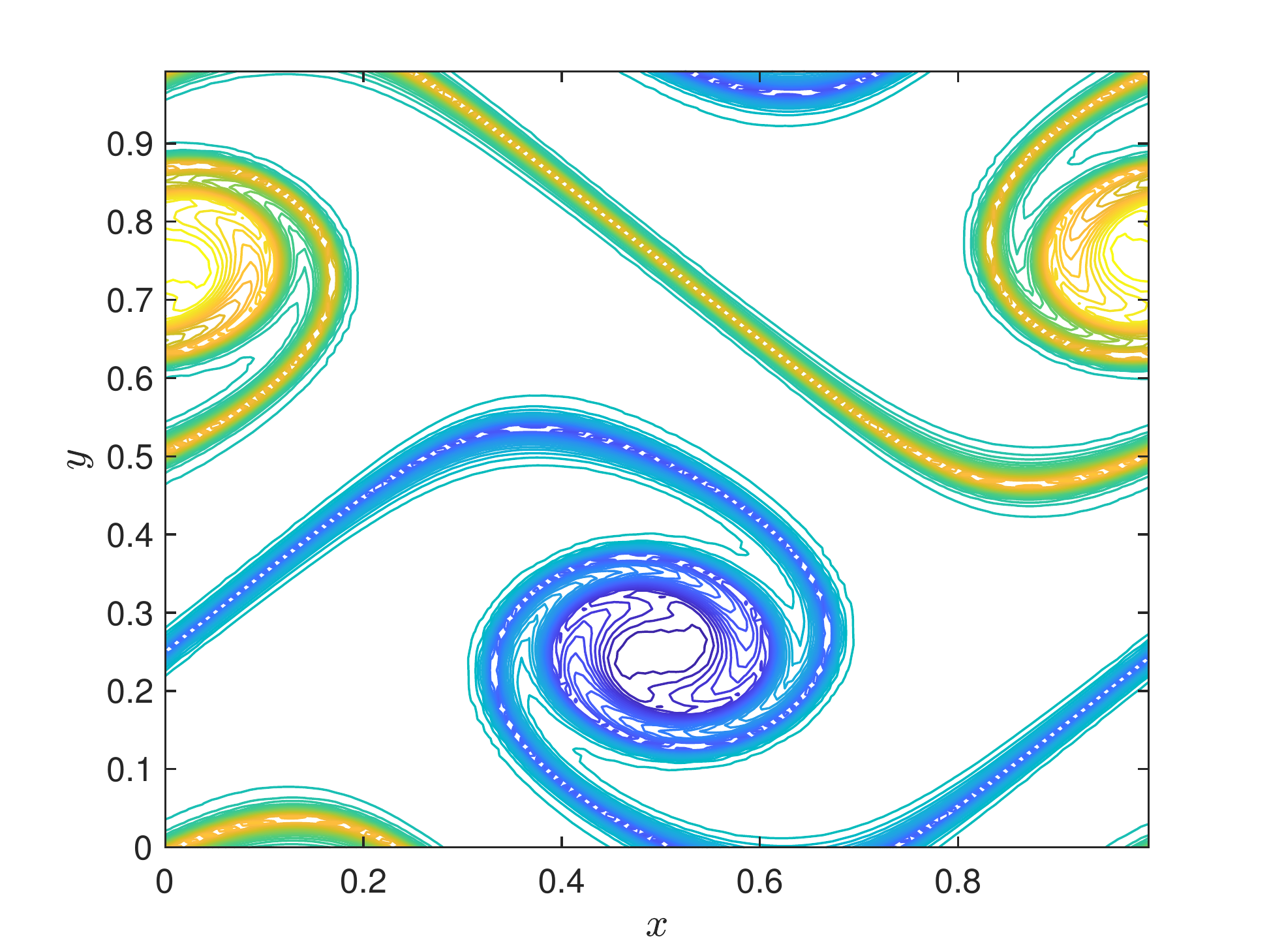}
	\caption{Example 5 (\romannumeral2). The vorticity contours at $T = 1.2$ with $\nu  = 5e-5$, and
		$\delta t = 6.7 e- 4$ obtained by Scheme \uppercase\expandafter{\romannumeral2}/BDF$k$ ($k=1, 2, 3, 4$).}
	\label{Fig:NS-double-shear-RESAV2-periodic-scheme-comparison}
\end{figure}

\begin{figure}[!h]
\centering
\subfigure[contours of vorticity at $T=0.8, 1, 1.2$]{
	\includegraphics[width=5.3cm]{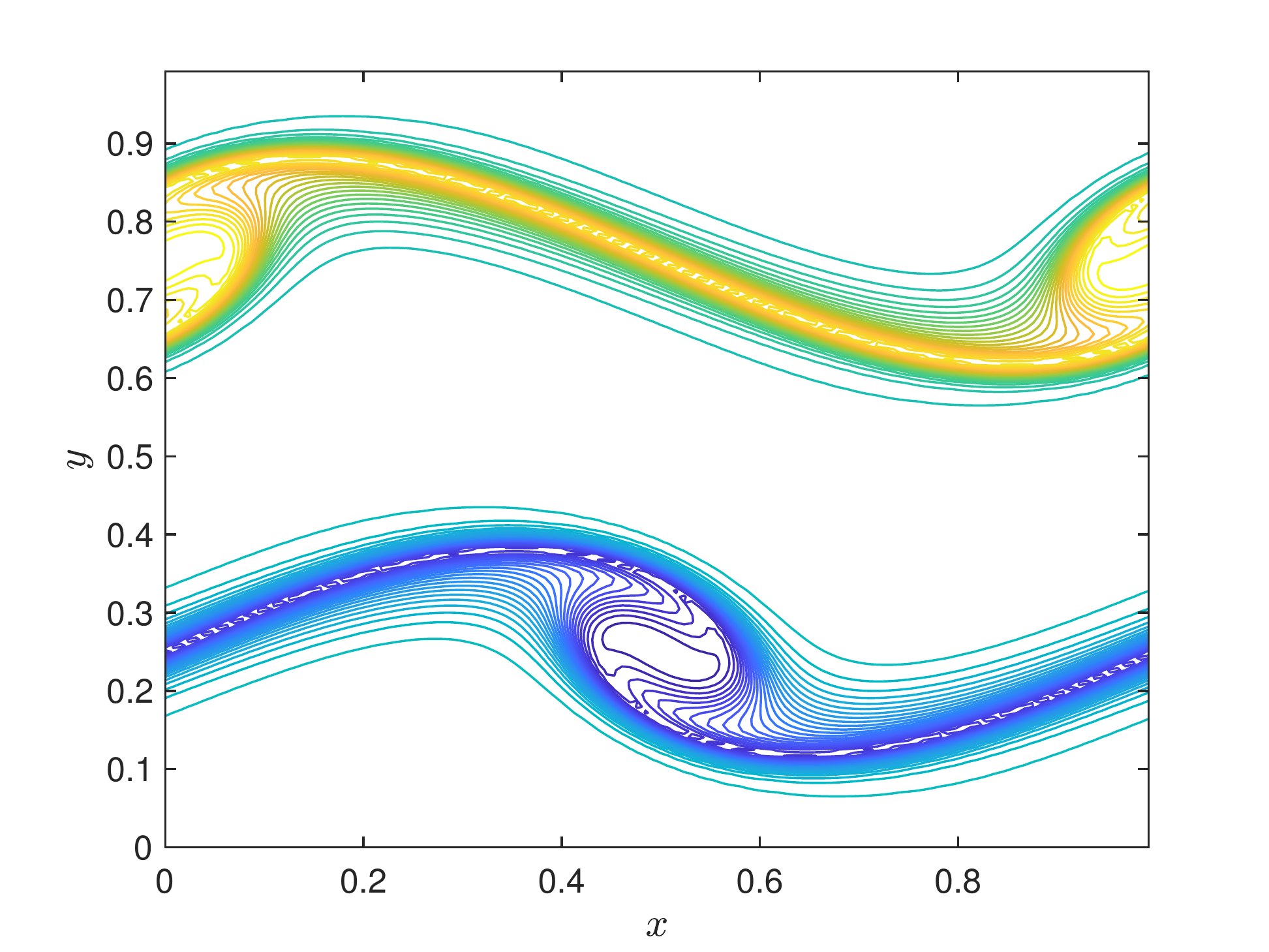}\hspace{-6mm}
	\includegraphics[width=5.3cm]{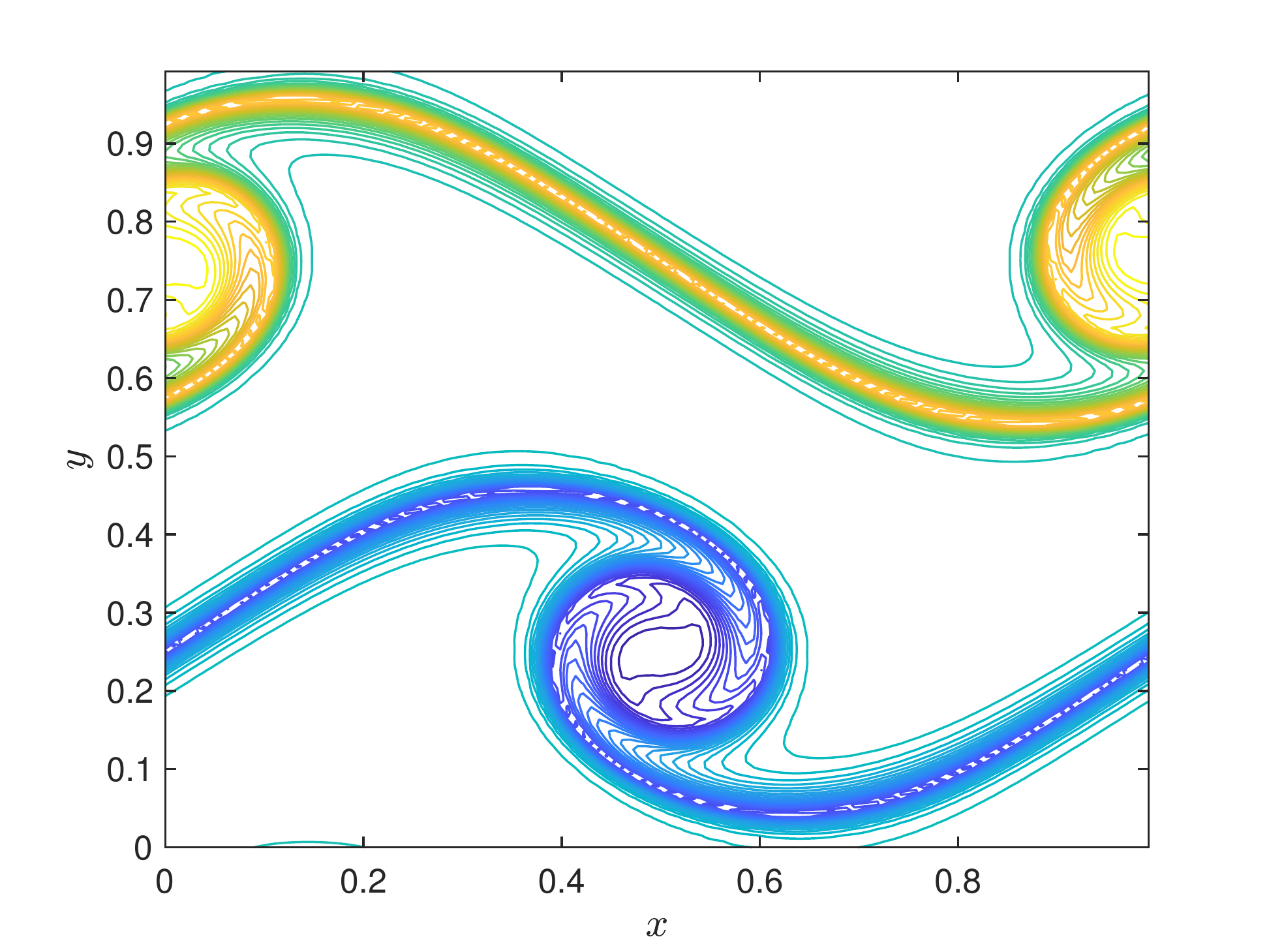}\hspace{-6mm}
	\includegraphics[width=5.3cm]{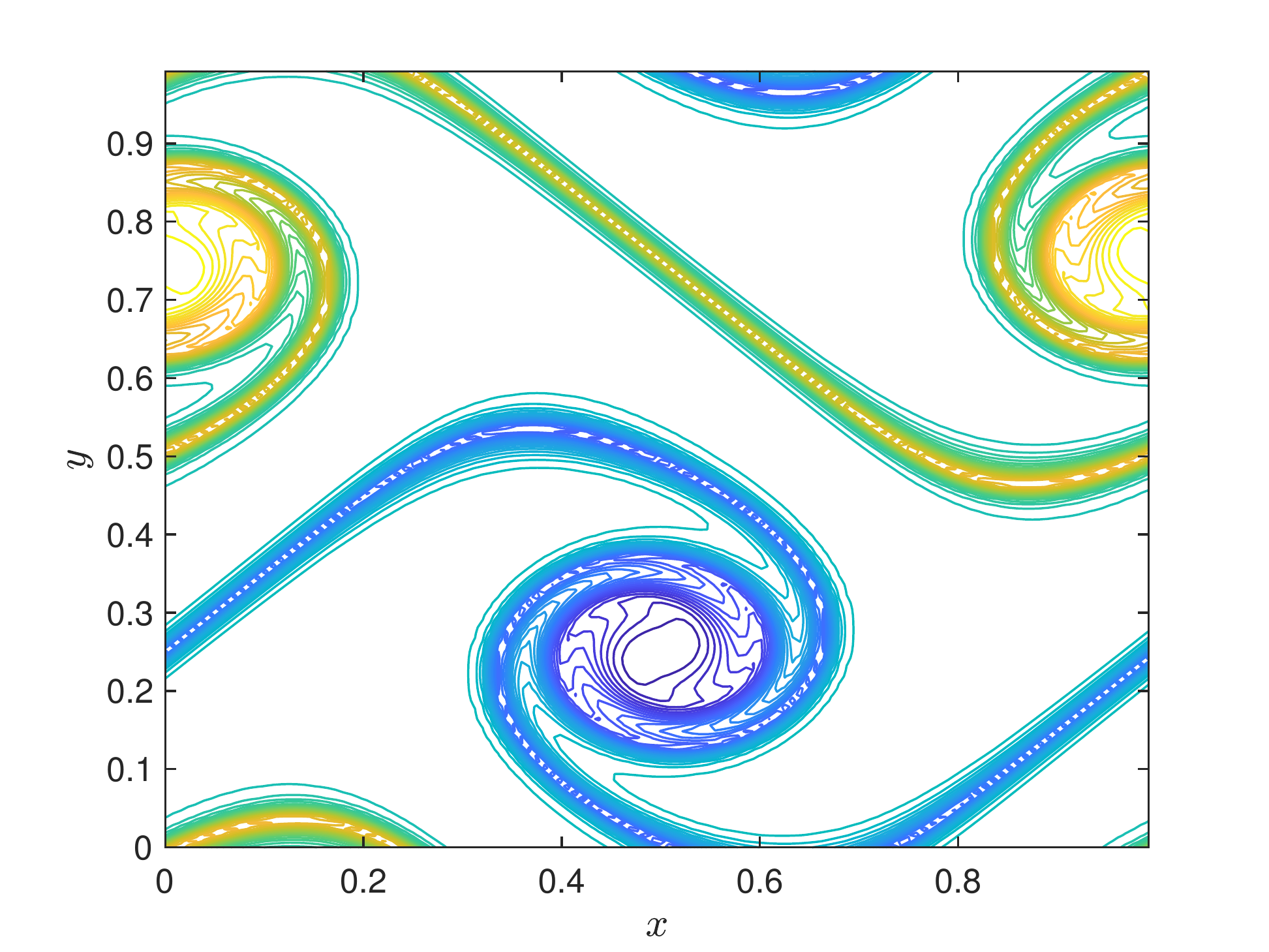}
} 
\subfigure[color-filled contours of vorticity at $T=0.8, 1, 1.2$]{
	\includegraphics[width=5.3cm]{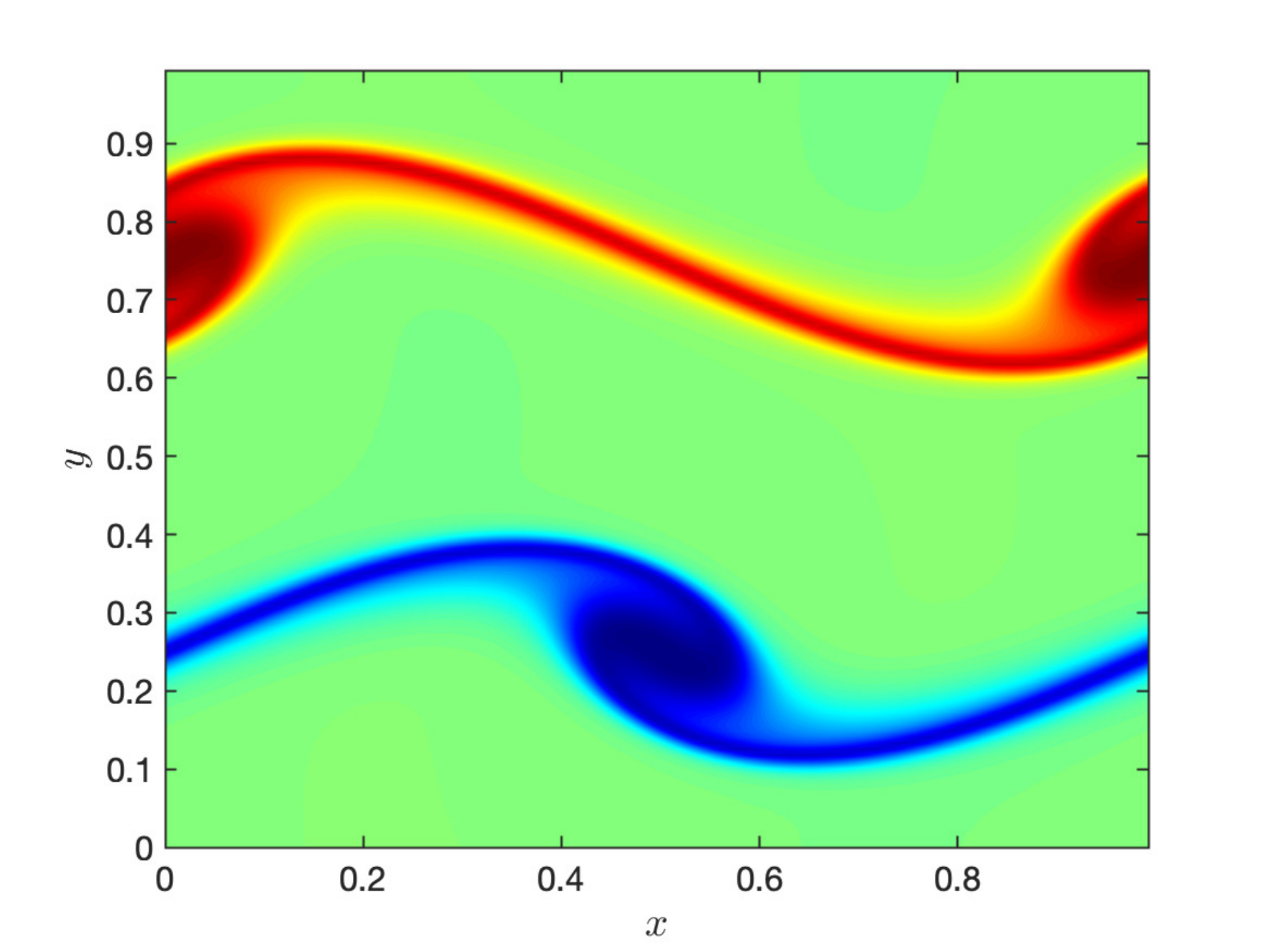}\hspace{-6mm}
	\includegraphics[width=5.3cm]{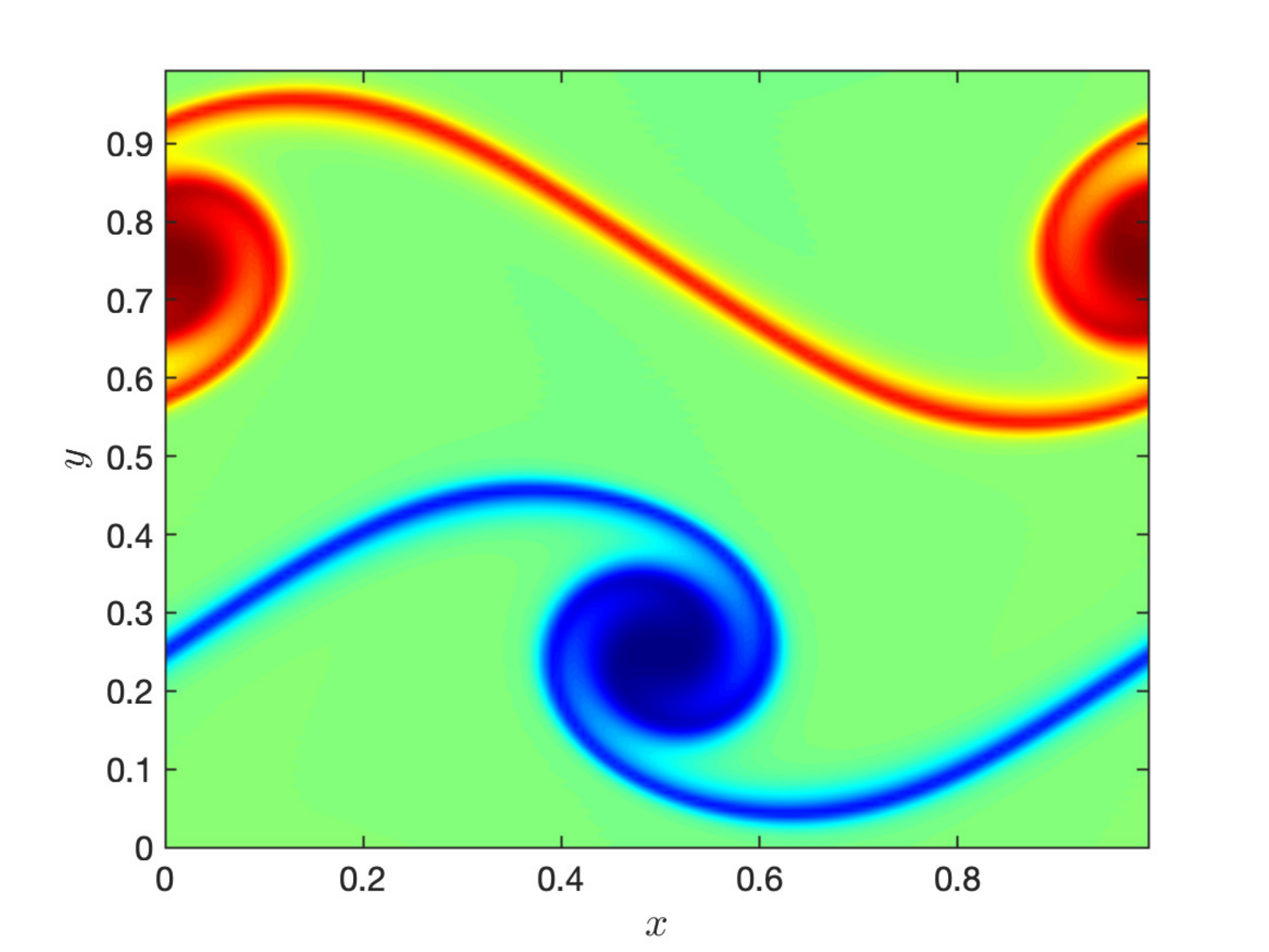}\hspace{-6mm}
	\includegraphics[width=5.3cm]{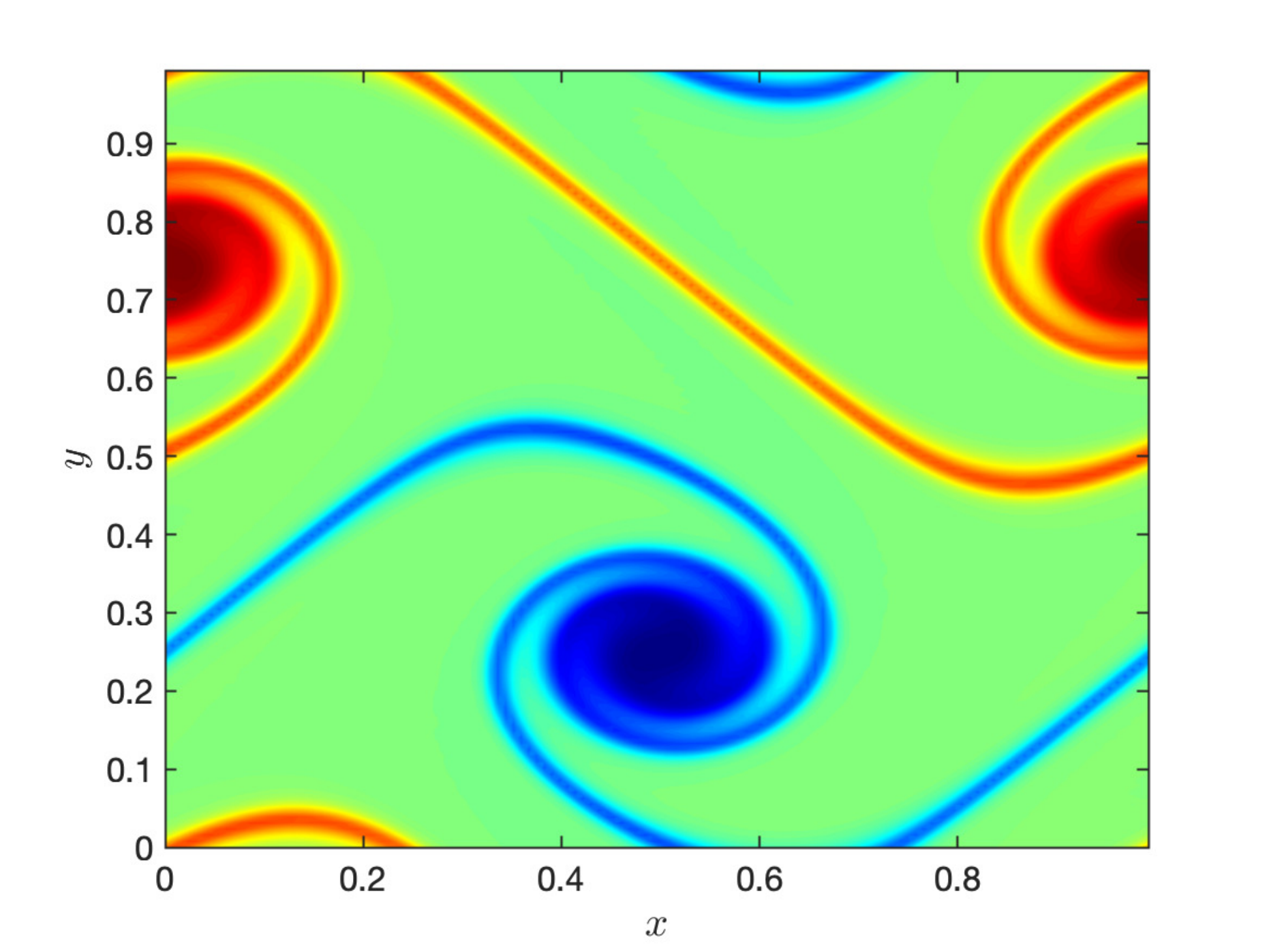}
}
\label{Fig:NS-double-shear-RESAV2-periodic-evolution}
\caption{Example 5 (\romannumeral2). The evolution of vorticity with $\nu  = 1e-4$, and $\delta t = 6e- 4$ obtained by Scheme \uppercase\expandafter{\romannumeral2}/BDF$2$.}
\end{figure}

\section{Conclusions}
\label{sec:conclusion}
In this paper, we constructed two kind of R-ESAV/BDF$k$ schemes which can improve the accuracy significantly by introducing a relaxation factor to improve the accuracy for the classical SAV method. The constructed schemes are linear and unconditionally energy stable. They can guarantee the positive property of SAV without any assumption compared with R-SAV and R-GSAV approach. Moreover  the constructed R-ESAV-2 approach is easy to construct high-order BDF$k$ schemes and can be applied to general dissipative system. Finally we proved that the constructed RESAV scheme with relaxation are more accuracy and closer to original energy by using ample numerical examples.

\clearpage
\bibliographystyle{plain}
\bibliography{myref}

\end{document}